\theoremstyle{plain}
\newtheorem{theorem}{Theorem}[section]
\newtheorem{prop}[theorem]{Proposition}
\newtheorem{corollary}[theorem]{Corollary}
\newtheorem{lemma}[theorem]{Lemma}
\theoremstyle{definition}
\newtheorem*{remark*}{Remark}
\newtheorem*{remarks*}{Remarks}
\def\<{\langle}
\def\>{\rangle}
\def\geqs{\geqslant}
\def\leqs{\leqslant}
\def\a{\alpha}
\def\b{\beta}
\def\g{\gamma}
\def\d{\delta}
\def\e{\varepsilon}
\def\s{\sigma}
\def\w{\omega}
\def\z{\zeta}
\def\N{\mathbb N}
\def\V{\mathbb V}
\def\R{\mathbb R}
\def\LL{\mathscr L}
\def\RR{\mathscr R}
\def\/{\kern 0.05em}
\newcommand{\flushpar}{\par\noindent}
\newcommand{\nab}[1]{\nabla\kern-.2em\lower.8ex\hbox{\SMALL$#1$}\/}
\newcommand{\nabsq}[2]{\nabla^2\kern-.3em\lower.8ex\hbox{\SMALL$#1,#2$}\/}
\newcommand{\diffop}[2]{#1\kern-.1em\lower.8ex\hbox{\SMALL$#2$}\/} 
\newcommand{\dotprod}{\/\raise.25ex\hbox{\SMALL$\bullet$}\/}
\newcommand{\uperp}[1]{#1\raise1.1ex\hbox{\SMALL$\bot$}}
\DeclareMathOperator{\trace}{tr} 
\DeclareMathOperator{\im}{im} 
\DeclareMathOperator{\Ric}{Ric} 
\DeclareMathOperator{\Div}{div}
\numberwithin{equation}{section}
\begin{document}
\title{Harmonic Vector Fields on Space Forms}

\author{M. Benyounes}
\address{D\'epartement de Math\'ematiques \\
Universit\'e de Bretagne Occidentale \\
6, avenue Victor Le Gorgeu \\
CS 93837, 29238 Brest Cedex 3\\
France} \email{Michele.Benyounes@univ-brest.fr}

\author{E. Loubeau}
\address{D\'epartement de Math\'ematiques \\
Universit\'e de Bretagne Occidentale \\
6, avenue Victor Le Gorgeu \\
CS 93837, 29238 Brest Cedex 3\\
France} 
\email{Eric.Loubeau@univ-brest.fr}

\author{C.~M. Wood}
\address{Department of Mathematics \\
University of York \\
Heslington, York Y010 5DD\\
U.K.} \email{cmw4@york.ac.uk}

\keywords{Harmonic section, generalised Cheeger-Gromoll metric, conformal gradient field, Killing field, loxodromic field, dipole, conformal field, quadratic gradient field}

\subjclass{53C07, 53C20}

\date{\today}

\begin{abstract}
A vector field $\s$ on a Riemannian manifold $M$ is said to be harmonic if there exists a member of a $2$-parameter family of generalised Cheeger-Gromoll metrics on $TM$ with respect to which $\s$ is a harmonic section.  If $M$ is a simply-connected non-flat space form other than the $2$-sphere, examples are obtained of conformal vector fields that are harmonic.  In particular, the harmonic Killing fields and conformal gradient fields are classified, a loop of non-congruent harmonic conformal fields on the hyperbolic plane constructed, and the $2$-dimensional classification achieved for conformal fields.  A classification is then given of all harmonic quadratic gradient fields on spheres.
\end{abstract}

\maketitle

\section{Introduction}

Let $\s$ be a vector field on an $n$-dimensional Riemannian manifold $(M,g)$.  It was observed in \cite{Nou} and \cite{Ish} that if $M$ is compact and $\s$ is a harmonic map \cite{ES} into $TM$ equipped with the Sasaki metric $h$ \cite{Sas} then $\s$ is parallel; furthermore this remains the case under the less stringent and arguably more natural condition that $\s$ is a harmonic section of $TM$; ie. a critical point of (vertical) energy with respect to variations through vector fields \cite{Wood1}.  This rigidity may be overcome for vector fields of unit length by further restricting the class of variations to unit vector fields; in other words, the study of harmonic sections of the unit tangent bundle.  These {\sl harmonic unit fields\/} have acquired an extensive literature; see for example the bibliography of \cite{Gil}.  Unfortunately its scope is restricted by the topology of $M$.  The essential reason for the rigidity of the energy functional in this context is that the restriction of the Sasaki metric to tangent spaces is flat, creating an analogy between harmonic sections of $TM$ and harmonic (vector-valued) functions on $M$; whereas harmonic sections of the unit tangent bundle are analogous to harmonic maps into spheres \cite{Smi}.  To complete the analogy, ordinary derivatives are replaced by covariant derivatives, and the Laplace-Beltrami operator for functions by the rough Laplacian for vector fields; thus the equations for a harmonic unit field are:
\begin{equation}
\nabla^*\nabla\s=|\nabla \s|^2\s,
\label{harmunit}
\end{equation}
where:
\begin{equation}
\nabla^*\nabla\s=-\trace\nabla^2\s
=-\textstyle\sum_i\nabsq{E_i}{E_i}\s,
\label{roughlap}
\end{equation}
for any local $g$-orthonormal tangent frame $\{E_i\}$ on $M$.  

\par
In \cite{BLW1} it was proposed to address the rigidity problem for vector fields by embedding $h$ in a 2-parameter family $h_{p,q}$ ($p,q\in\R$) of {\sl generalised Cheeger-Gromoll metrics,}  within which $h=h_{0,0}$, the Cheeger-Gromoll metric \cite{CG} appears as $h_{1,1}$, and $h_{2,0}$ is the stereographic metric.  The $h_{p,q}$ all belong to the infinite-dimensional family of $g$-natural metrics on $TM$ \cite{Ab1,Ab2}, but are much more tightly controlled, being constructed from a spherically symmetric family of metrics on $\R^n$ via the Kaluza-Klein procedure.  Then $\s$ is said to be {\sl $(p,q)$-harmonic\/} if $\s$ is a harmonic section of $TM$ with respect to  $h_{p,q}$ (the metric $g$ on $M$ is fixed throughout).  From \cite{BLW1} the corresponding Euler-Lagrange equations are:
\begin{equation}
\tau_{p,q}(\s)=T_p(\s)-\phi_{p,q}(\s)\/\s=0,
\label{harmeqn}
\end{equation}
where:
\begin{equation}
\gathered
T_p(\s)=(1+|\s|^2)\nabla^*\nabla\s+2p\/\nab{\nabla F}\s, \\
\phi_{p,q}(\s)=p\/|\nabla\s|^2-pq\/|\nabla F|^2-q(1+|\s|^2)\Delta F.
\endgathered
\label{harmterms}
\end{equation}
\medskip
\flushpar
Here $F$ is an abbreviation for $\tfrac12\/|\s|^2$, $\nabla F$ denotes the gradient vector field, and $\Delta F$ the Laplacian.
If $\s$ is a unit field then it is easily seen that \eqref{harmeqn} reduces to \eqref{harmunit} with $p=2$; thus harmonic unit fields are $(2,q)$-harmonic, for all $q$.  If $\s$ is parallel then $\s$ is $(p,q)$-harmonic for all $(p,q)$.

\par
A notable feature of the generalised Cheeger-Gromoll family is that if $q<0$ then $h_{p,q}$ has variable signature.  Precisely, if for $r>0$ the ball and sphere subbundles of $TM$ are denoted:
$$
BM(r)=\{X\in TM:|X|<r\},
\qquad
SM(r)=\{X\in TM:|X|=r\},
$$
and $q<0$, then $h_{p,q}$ is Riemannian on the {\sl $q$-Riemannian ball bundle\/} $BM(r)$ where $r=\sqrt{-1/q}$, degenerate (in radial vertical directions) on $SM(r)$, and Lorentz\-ian on the exterior of $BM(r)$.  However, for all $r>\sqrt{-1/q}$ the sphere bundle $SM(r)$ is a spacelike hypersurface of $(TM,h_{p,q})$, and the induced Riemannian metric is vertically homothetic to that induced by the Sasaki metric.  We say that $\s$ is {\sl $q$-Riemannian\/} if the pullback $\s^*h_{p,q}$ is Riemannian.  This condition, which is independent of $p$, is equivalent to either $\s$ having constant length, or:
\begin{equation}
q\/|\s(x)|^2\geqs-1,
\quad\text{for all $x\in M$.}
\label{qriem}
\end{equation}
Note that \eqref{qriem} allows $\s$ to touch the boundary of the $q$-Riemannian ball bundle.  

\par
In this paper we say that $\s$ is a {\sl harmonic vector field\/} if $\s$ is $(p,q)$-harmonic for some pair $(p,q)$, which we refer to as {\sl metric parameters for $\s$.}  Under the weaker condition that $T_p(\s)$ is pointwise collinear with $\s$ we say that $\s$ is {\sl $p$-preharmonic.}  If the metric parameters for a harmonic $\s$ are unique we will say that $\s$ is {\sl metrically unique.}   The previously mentioned examples show that this need not be the case.  However, with one exception, all the examples considered in this paper turn out to be metrically unique; and the exception has just two pairs of metric parameters.  Nevertheless, it is interesting to learn that metric non-uniqueness can occur for vector fields of non-constant length.  Part of the problem when looking for harmonic vector fields is to identify metric parameters, and to this end the following result of \cite{BLW1} provides some guidance.

\begin{prop}\label{propharm}
Suppose $M$ is compact and $\s$ is a harmonic vector field of non-constant length.  For each $p$ there exists at most one pair of metric parameters $(p,q)$ for $\s$.
Furthermore if $|p|\leqs1$, or $p>1$ and $\|\s\|_\infty\leqs1/\sqrt{p-1}$, then $q<0$, with $q<1-p/2$ if $p\geqs2$.
\end{prop}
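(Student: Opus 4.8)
The plan is to start from the Euler–Lagrange equation \eqref{harmeqn} and exploit the hypothesis that $\s$ has non-constant length by integrating a suitable scalar identity over the compact manifold $M$. The natural scalar to consider is obtained by taking the inner product of $\tau_{p,q}(\s)=0$ with $\s$ itself, which yields
\begin{equation}
(1+|\s|^2)\langle\nabla^*\nabla\s,\s\rangle+2p\,\langle\nab{\nabla F}\s,\s\rangle-\phi_{p,q}(\s)\,|\s|^2=0.
\label{pairing}
\end{equation}
The plan is to rewrite each term using the Bochner-type identities $\langle\nabla^*\nabla\s,\s\rangle=-\Delta F+|\nabla\s|^2$ (recalling $F=\tfrac12|\s|^2$) and $\langle\nab{\nabla F}\s,\s\rangle=\langle\nabla F,\nabla F\rangle=|\nabla F|^2$, after which \eqref{pairing} becomes a pointwise relation among $\Delta F$, $|\nabla\s|^2$, $|\nabla F|^2$, $|\s|^2$, and the parameters $p,q$. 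This is the mechanism by which $q$ gets pinned down.

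For the uniqueness claim, suppose $(p,q)$ and $(p,q')$ were both metric parameters. Subtracting the two instances of \eqref{harmeqn} (same $p$, hence same $T_p(\s)$) gives $\bigl(\phi_{p,q}(\s)-\phi_{p,q'}(\s)\bigr)\s=0$, and from the formula for $\phi_{p,q}$ in \eqref{harmterms} the scalar factor is $(q'-q)\bigl(p|\nabla F|^2+(1+|\s|^2)\Delta F\bigr)$. On the open set where $\s\neq0$ this forces $p|\nabla F|^2+(1+|\s|^2)\Delta F=0$ (assuming $q\neq q'$); the remaining step is to integrate this over $M$ — the term $\int_M(1+|\s|^2)\Delta F$ can be handled by parts, using $\int_M\Delta F=0$ and $\int_M|\s|^2\Delta F=\int_M 2F\,\Delta F=-2\int_M|\nabla F|^2$, turning the identity into $(p-2)\int_M|\nabla F|^2=0$ plus boundary-free remainder terms. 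One then argues that $|\nabla F|\equiv0$, i.e. $\s$ has constant length, contradicting the hypothesis; some care is needed for the borderline case $p=2$, which is presumably why the compactness and the structure of the zero set of $\s$ matter here.

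For the sign and size conclusions, the plan is to return to the pointwise identity coming from \eqref{pairing} and integrate it over $M$. After using $\int_M(1+|\s|^2)\Delta F=-2\int_M|\nabla F|^2$ again, one obtains an integral relation of the shape
\begin{equation}
\int_M\Bigl[(1+|\s|^2)|\nabla\s|^2-2|\nabla F|^2+(\text{terms in }p)\,|\nabla F|^2\Bigr]
= q\int_M\Bigl[(\text{positive combination of }|\nabla F|^2,|\s|^2|\nabla F|^2,\dots)\Bigr].
\label{intrel}
\end{equation}
The left-hand side should be shown to be strictly positive under the stated hypotheses on $p$ and $\|\s\|_\infty$ — here the Kato-type inequality $|\nabla F|^2=|\langle\nabla_{\!\cdot}\s,\s\rangle|^2\leq|\s|^2|\nabla\s|^2$ is the key estimate, so that $(1+|\s|^2)|\nabla\s|^2\geq(1+|\s|^2)|\nabla F|^2/|\s|^2$, and the coefficient arithmetic works out precisely when $|p|\leq1$, or when $p>1$ and $|\s|^2\leq1/(p-1)$. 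Since the right-hand bracket of \eqref{intrel} is non-negative and not identically zero (as $\nabla F\not\equiv0$), positivity of the left forces $q<0$. The refinement $q<1-p/2$ for $p\geq2$ should come from feeding the already-established uniqueness identity, or rather the inequality version of it, back into \eqref{intrel} to extract the sharper bound. The main obstacle I anticipate is the bookkeeping in \eqref{intrel}: getting the coefficients of the various integrands exactly right so that the Kato inequality closes the argument, and isolating precisely the threshold $\|\s\|_\infty\leq1/\sqrt{p-1}$ rather than a weaker constant.
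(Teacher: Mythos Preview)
The paper does not prove Proposition~\ref{propharm}; it is quoted from \cite{BLW1} with the attribution ``the following result of \cite{BLW1} provides some guidance.'' So there is no in-paper argument to compare your proposal against.

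That said, your strategy is sound and is essentially what one finds in \cite{BLW1}: pair \eqref{harmeqn} with $\s$, use the Weitzenb\"ock identity \eqref{weitz}, integrate, and exploit the Cauchy--Schwarz/Kato inequality $|\nabla F|^2\leqs|\s|^2|\nabla\s|^2$. Two points deserve correction. First, you are tacitly using the analyst's sign convention $\Delta=\Div\grad$, whereas the paper (and \cite{BLW1}) use $\Delta F=-\Div\nabla F$; with the paper's convention one has $\int_M 2F\,\Delta F=+2\int_M|\nabla F|^2$, so your uniqueness computation yields $(p+2)\int_M|\nabla F|^2=0$ and the exceptional value is $p=-2$, not $p=2$. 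Second, you leave that exceptional case unresolved; it can be closed by multiplying the pointwise identity $p\/|\nabla F|^2+(1+2F)\Delta F=0$ by $F$ before integrating, which for $p=-2$ gives $\int_M|\nabla F|^2+2\int_M F|\nabla F|^2=0$, and both integrands are non-negative. For the inequality $q<0$ your outline is correct in spirit, but the precise threshold $\|\s\|_\infty\leqs 1/\sqrt{p-1}$ emerges only after the coefficient bookkeeping you flag as the main obstacle; you should carry that computation through rather than leave it schematic.
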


Proposition \ref{propharm} does not assert the existence of a lower bound for $q$ on compact manifolds, or that a harmonic vector field is necessarily $q$-Riemannian.  However this turns out to be the case for all the examples considered in this paper.  Interestingly, the contrapositive can be used to provide bigger than expected lower bounds for their image diameter.

\par
A feature of the harmonicity equation \eqref{harmeqn} that is implicit in Proposition \ref{propharm} is its non-invariance under dilations of $TM$.  In fact, if $\s$ is harmonic and $r\in\R$ then unless $r=0,-1$, or the length of $\s$ is constant, it cannot be inferred that $r\s$ is harmonic, even under a change of metric parameters.  Our examples illustrate this very clearly.  Thus harmonic vector fields come with a preferred scale, and determining this is another element of the problem.  It also means that the space of harmonic vector fields is not self-evidently contractible.  However, equation \eqref{harmeqn} is invariant under the action of the isometry group of $(M,g)$.  Thus if $\s$ is $(p,q)$-harmonic then so is any vector field congruent to $\s$, and harmonic vector fields therefore need only be classified up to congruence.  (This could justifiably be considered a basic requirement of the theory).  Continuous families of harmonic vector fields may therefore be easily constructed by the action of $1$-parameter subgroups of isometries.  However in this paper we exhibit a $1$-parameter family of non-congruent harmonic vector fields.  Such families appear in general when $(M,g,J)$ is a K\"ahler manifold, in which case there is a circle action:
$$
e^{it}.\s=(\cos t)\/\s+(\sin t)\/J\s,
$$
with respect to which $\tau_{p,q}$ is equivariant:
$$
\tau_{p,q}(e^{it}.\s)=e^{it}.\tau_{p,q}(\s).
$$ 
Thus, if $\s$ is $(p,q)$-harmonic then so is $e^{it}.\s$ for all $t$.  Borrowing the terminology of classical minimal surface theory, we refer to this $S^1$-orbit of $\s$ as the {\sl associate family\/} of harmonic vector fields, and $J\s$ as the {\sl conjugate\/} harmonic field.

\par
Throughout the paper $M$ is a simply-connected non-flat real space form; ie. the sphere $S^n$ or real hyperbolic space $H^n$.  Our approach is unashamedly extrinsic, which we find provides greater geometric insight.
As far as possible we try to treat the two cases simultaneously, although there are often marked differences between them, and certain situations where the hyperbolic case requires more attention.  Underlying our search for harmonic vector fields are a number of unifying themes.  One is that all vector fields considered can be constructed from {\sl conformal gradient fields.}  Such fields were considered as the prime example in \cite{BLW1}, but for the sphere only, and it was shown that whilst none exist on the $2$-sphere, for each dimension greater than two there exists a unique (up to congruence) harmonic representative, which is also metrically unique.  In Section \ref{secconfgrad} we extend that classification to hyperbolic space.  Notable differences are the appearance of a $2$-dimensional example, and a family of metrically non-unique examples in higher dimensions.  Many of the basic computational elements  set out in Section \ref{secconfgrad} will be re-used in our analyses of more complicated vector fields.  In Section \ref{seckill} we classify all harmonic Killing fields on $M$.  It came as somewhat of a surprise to us to learn that Killing fields can be constructed from conformal gradient fields, and we utilise such decompositions later in the paper.  However in Section \ref{seckill}, whilst decomposition into conformal gradients affords an attractive analysis of harmonic Killing fields on spheres, in the hyperbolic case there are additional complications that make the procedure rather unwieldy; we have therefore chosen an alternative approach that works well for both.  We first show that all harmonic Killing fields on hyperbolic space are infinitesimal rotations, thus eliminating infinitesimal translations and fields of parabolic type.  These, along with all spherical Killing fields, are classified up to congruence by their spectrum of rotational frequencies, which we refer to as the field's {\sl twists,} and we then show that harmonicity demands that these are all equal, or in other words that the field is {\sl balanced.}  Finally, after discounting the Hopf fields on odd-dimensional spheres (which are the only Killing fields of constant length, and well-known to be harmonic \cite{Wie, Wood2}), we show that there is a unique value for the twist, and determine the metric parameters, which are also unique.  In the spherical case it transpires that harmonic Killing fields of non-constant length exist only in dimension four and higher.  However harmonic Killing fields exist on hyperbolic spaces of all dimensions; in particular, on the hyperbolic plane, we note the appearance of the conjugate of the unique (up to congruence) harmonic conformal gradient field.  In Section \ref{seclox} we examine {\sl loxodromic\/} vector fields.  These conformal fields provide a natural deformation between conformal gradient fields and certain types of Killing fields: non-Hopf fields on the sphere, or infinitesimal rotations on hyperbolic space.  On the hyperbolic plane, the associate family of the harmonic vector fields uncovered in Sections \ref{secconfgrad} and \ref{seckill} is loxodromic.  We show that these are in fact the only harmonic loxodromic fields in any dimension.  In Section \ref{secdipole} we study another natural conformal deformation between conformal gradients and Killing fields, that is applicable to infinitesimal hyperbolic translations.  This deformation passes through a {\sl dipole field,} which is of interest in its own right.  However, the only possible harmonic vector fields in the entire deformation are the ``endpoints''; therefore in the hyperbolic case only the conformal gradient field ``endpoint'' can be harmonic, and on the $2$-sphere there are no harmonic vector fields of this type.  In Section \ref{sec2conf} we consolidate results of Sections \ref{seclox} and \ref{secdipole} by classifying all harmonic conformal vector fields in $2$-dimensions, confirming non-existence on the $2$-sphere.  Finally in Section \ref{secquadgrad}, with the $2$-sphere somewhat in mind, we classify all harmonic {\sl quadratic gradient fields\/} on spheres.  In fact, these exist only on odd-dimensional spheres of dimension five and higher.  Each such sphere has a unique (up to congruence) harmonic representative $\s$, which is also metrically unique.  As one might hope, the geometry of $\s$ is pleasingly simple: a pair of equidimensional mutually orthogonal totally geodesic submanifolds of maximal dimension play the role of source and sink, and $\s$ flows along the inter-connecting great circles with a specified maximum speed.

\par
A second unifying theme of the paper is that all our examples of harmonic vector fields turn out to be eigenfunctions of the rough Laplacian.  In this case $\s$ is clearly $0$-preharmonic, and $p$-preharmonic for $p\neq0$ if and only if:
\begin{equation}
\nab{\nabla F}\s=\z\/\s,
\label{preharm}
\end{equation}
for some smooth function $\z\colon M\to\R$, where necessarily: 
\begin{equation}
|\s|^2 \z=|\nabla F|^2.
\label{spinn}
\end{equation}  
Equation \eqref{preharm} may be regarded as a partial integrability condition for the full harmonicity equation \eqref{harmeqn}, and is notable for its independence of the metric parameters.  We say an eigenfunction $\s$ is {\sl preharmonic\/} if \eqref{preharm} holds, and refer to $\z$ as the {\sl spinnaker\/} of $\s$. Use of \eqref{spinn} together with the Weitzenb\"ock identity:
\begin{equation}
\<\nabla^*\nabla\s,\s\>=|\nabla\s|^2+\Delta F
\label{weitz}
\end{equation}
then reduces the harmonicity equation \eqref{harmeqn} to the following PDE for $F$:
\begin{equation}
(p+q+2qF)\Delta F+2p(1+qF)\z+\nu\bigl(1+2(1-p)F\bigr)=0,
\label{eigenharm}
\end{equation}
where $\nu$ is the eigenvalue.

\section{Conformal Gradient Fields}\label{secconfgrad}

Throughout the paper we use the hyperboloid model of hyperbolic space.  
Thus, let $\R^{n,1}$ denote $\R^{n+1}$ equipped with the Lorentzian inner product:
$$
\<x,y\>=x_1y_1+\cdots+x_ny_n-x_{n+1}y_{n+1},
\quad\text{for all $x,y\in\R^{n+1}$.}
$$
Then:
$$
H^n=\{x\in\R^{n,1}:\<x,x\>=-1,\,x_{n+1}>0\}.
$$
For the sphere we use the standard model:
$$
S^n=\{x\in\R^{n+1}:\<x,x\>=1\},
$$
where $\<\text{-},\text{-}\>$ now denotes the Euclidean inner product on $\R^{n+1}$.  In both cases the Riemannian metric $g$ on $M$ is that induced by $\<\text{-},\text{-}\>$, and for notational convenience we will often also denote $g$ by $\<\text{-},\text{-}\>$.  In order to handle spherical and hyperbolic computations simultaneously we introduce the indicator symbol $\e$, which is set to $1$ when $M=S^n$ and $-1$ when $M=H^n$.  Furthermore, we let $\V$ denote ambient $(n+1)$-dimensional Euclidean or Lorentzian space, as appropriate.

\par
Let $a\in\V$ be any vector, and set $\mu=\<a,a\>$.  If $\V$ is Lorentzian then $\mu$ may be negative, in which case we assume without loss of generality that $a$ is future oriented.  Let $\a\colon  M\to\R$ be the restriction of the covector metrically dual to $a$:
\begin{equation}
\a(x)=\<a,x\>,
\quad\text{for all $x\in M$.}
\label{dual}
\end{equation}
The {\sl conformal gradient field\/} $A$ on $M$ with {\sl pole\/} $a$ is then defined $\s=\nabla\a$, the spherical/hyperbolic gradient of $\a$.  We sometimes refer to $\a$ as the {\sl potential\/} of $\s$.  It follows from the linearity of $\a$ that for all $X\in TM$:
\begin{equation}
\<\s,X\>=d\a(X)=\a(X)=\<a,X\>.
\label{confdual}
\end{equation}
Therefore if $\{E_i\}$ is an orthonormal basis of $T_xM$:
\begin{align}
\s(x)
&=\textstyle\sum_i\<\s(x),E_i\>E_i
=\textstyle\sum_i\<a,E_i\>E_i
=a-\e\/\a(x)x,
\label{confield}
\end{align}
since $\{E_1,\dots,E_n,x\}$ is an orthonormal basis of $\V$.  In particular:
\begin{align}
|\s|^2
&=\mu-\e\/\a^2.
\label{conflength}
\end{align}
In the spherical case $\s$ has precisely two zeros (viz. $\pm a/\sqrt{\mu}\,$), and $\mu$ is the maximum value of $|\s|^2$, which is attained on the equatorial hypersphere orthogonal to $a$.  In the hyperbolic case it follows from \eqref{confield} and \eqref{conflength} that $\s$ has unbounded length.  Furthermore $\s$ has a zero only when $a$ is timelike, in which case the integral curves of $\s$ are, as point sets, the geodesic rays from the zero (viz. $a/\sqrt{-\mu}\/$). 
If $a$ is spacelike then the integral curves of $\s$ are the ultraparallel family of geodesics orthogonal to the totally geodesic hypersurface $\uperp a\cap H^n$, which we refer to as the {\sl equator\/} for $\s$.  Furthermore $\mu$ is the minimum value of $|\s|^2$, which is attained on the equator.  If $a$ is lightlike the integral curves of $\s$ are asymptotically parallel geodesics, and $\s\to0$ asymptotically.  In the spherical case, and the hyperbolic cases with $\mu\neq0$, $\s$ is determined up to congruence by $\mu$.  

\par
The covariant derivative of $M$ is characterised by the Gauss formula \cite{ON}:
\begin{equation}
\nab XY=\diffop DXY+\e\<X,Y\>x,
\label{gauss}
\end{equation}
where $D$ denotes the standard directional derivative of $\R^{n+1}$, and $x$ is interpreted as a unit normal to $M$.  Hence by \eqref{confield} and \eqref{confdual}:
\begin{align}
\nab X\s
=-\e\/\a(X)x-\e\/\a(x) X+\e\<X,\s\>x
=-\e\/\a X.
\label{confcov}
\end{align}
From \eqref{confcov} the divergence of $\s$ is:
\begin{equation}
\Div \s
=\textstyle\sum_i\<\nab{E_i}\s,E_i\>
=-\e n\a.
\label{confdiv}
\end{equation}
Furthermore the second covariant derivative is:
\begin{align*}
\nabsq XY\s
&=\nab X\nab Y\s-\nab{\nab XY}\s
=-\e\/\nab X(\a Y)+\e\a\/\nab XY 
=-\e\<\s,X\>Y,
\end{align*}
from which, by \eqref{roughlap}:
\begin{equation}
\nabla^*\nabla \s=\e\/\s.
\label{confrough}
\end{equation}
So $\s$ is an eigenfunction of the rough Laplacian.

\begin{lemma}\label{lemconf}
All conformal gradient fields on a non-flat space form are preharmonic, with spinnaker $\z=\e(\mu-2F)$.
\end{lemma}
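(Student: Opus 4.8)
The plan is to verify directly the two conditions that make $\s$ preharmonic. That $\s$ is an eigenfunction of the rough Laplacian is already furnished by \eqref{confrough}, with eigenvalue $\nu=\e$; so it remains only to exhibit a smooth function $\z\colon M\to\R$ satisfying \eqref{preharm}, to identify it with $\e(\mu-2F)$, and to confirm it is consistent with the compatibility relation \eqref{spinn}.

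First I would express $\nabla F$ in terms of $\s$. Since $F=\tfrac12|\s|^2$ and \eqref{conflength} gives $|\s|^2=\mu-\e\/\a^2$ with $\mu$ constant, differentiation yields
\[
\nabla F=-\e\/\a\/\nabla\a=-\e\/\a\/\s,
\]
using $\s=\nabla\a$. (Equivalently this can be read off from \eqref{confdual} and \eqref{confcov}, since $dF(X)=\<\nab X\s,\s\>=-\e\/\a\<X,\s\>$ for every $X\in TM$.)

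Next I would substitute this into the left-hand side of \eqref{preharm}. By $\R$-linearity of $\nabla$ in its first argument, $\nab{\nabla F}\s=-\e\/\a\,\nab\s\s$, and \eqref{confcov} taken with $X=\s$ gives $\nab\s\s=-\e\/\a\/\s$; hence $\nab{\nabla F}\s=\e^2\a^2\s=\a^2\s$. So \eqref{preharm} holds with $\z=\a^2$, which is manifestly smooth. Finally, \eqref{conflength} gives $\mu-2F=\mu-|\s|^2=\e\/\a^2$, whence $\e(\mu-2F)=\e^2\a^2=\a^2=\z$, as claimed.

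As a check on \eqref{spinn} (asserted in the text to be a necessary accompaniment to \eqref{preharm}), I would note that from $\nabla F=-\e\/\a\/\s$ one has $|\nabla F|^2=\a^2|\s|^2=|\s|^2\z$, so the compatibility relation holds automatically. There is essentially no obstacle in this argument: the computation is only a few lines, and the single point requiring a little care is the bookkeeping of the indicator $\e$, which collapses through $\e^2=1$ so that one formula covers $S^n$ together with all sub-cases of $H^n$ (timelike, spacelike and lightlike pole) without any case division.
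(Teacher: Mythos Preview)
Your proof is correct and follows essentially the same route as the paper: compute $\nabla F=-\e\a\s$ from \eqref{conflength}, plug into \eqref{confcov} to obtain $\nab{\nabla F}\s=\a^2\s$, and then identify $\z=\a^2=\e(\mu-2F)$. The extra remarks you add (explicit mention of the eigenvalue from \eqref{confrough}, and the check of \eqref{spinn}) are harmless elaborations of the same argument.
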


\begin{proof}
From \eqref{conflength}:
\begin{equation}
\nabla F=-\e\a\nabla\a=-\e\a\/\s,
\label{confgrad}
\end{equation}
hence by \eqref{confcov}:
\begin{equation*}
\nab{\nabla F}\s=-\e\a\nabla F=\a^2\s.
\label{confspinn}
\end{equation*}
Therefore $\s$ is preharmonic, and $\z$ follows by comparison with \eqref{conflength}. 
\end{proof}

\begin{theorem}\label{thmconf}
Let $\s$ be a non-trivial conformal gradient field on the non-flat space form $M$, with pole $a\in\V$ and $\<a,a\>=\mu$.
\flushpar
{\rm(1)}\quad
If $\mu\geqs0$ {\rm(}ie. $M=S^n$, or $M=H^n$ and $\s$ has no zeros{\rm)} then $\s$ is harmonic if and only if $n>2$ and $\mu=1/(n-2)$.  Furthermore $\s$ is metrically unique, with metric parameters $(p,q)=(n+1,2-n)$.
\flushpar
{\rm(2)}\quad
If $\mu<0$ {\rm(}ie. $M=H^n$ and $\s$ has a zero{\rm)} then $\s$ is harmonic if and only if $\mu=-1$.  Furthermore if $n=2$ then $\s$ is metrically unique, with metric parameters $(3,-1/2)$, and if $n>2$ then $\s$ has precisely two pairs of metric parameters:
$$
\text{\rm(a)\quad} (p,q)=(n+1,1-n+1/n);
\qquad
\text{\rm(b)\quad} (p,q)=(1/(2-n),0).
$$
\end{theorem}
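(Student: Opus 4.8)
The plan is to use the structural facts established in Section~\ref{secconfgrad}. By \eqref{confrough}, $\s$ is an eigenfunction of the rough Laplacian with eigenvalue $\nu=\e$, and by Lemma~\ref{lemconf} it is preharmonic with spinnaker $\z=\e(\mu-2F)$; consequently $\s$ is $(p,q)$-harmonic if and only if the reduced equation \eqref{eigenharm} holds. The one extra ingredient needed is $\Delta F$ in closed form. From the Weitzenb\"ock identity \eqref{weitz}, together with $\<\nabla^*\nabla\s,\s\>=\e|\s|^2=2\e F$ and $|\nabla\s|^2=n\a^2$ (both read off from \eqref{confcov}), one gets $\Delta F=2\e F-n\a^2$, and \eqref{conflength} turns this into $\Delta F=\e\bigl(2(n+1)F-n\mu\bigr)$.

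Substituting $\nu=\e$, $\z=\e(\mu-2F)$ and this expression for $\Delta F$ into \eqref{eigenharm} and cancelling the overall factor $\e$, the left-hand side becomes a polynomial in $F$ of degree at most two. Because $\s$ has non-constant length---the range of $|\s|^2$, recorded in Section~\ref{secconfgrad}, is a non-degenerate interval in each of the cases at hand---this polynomial must vanish identically. Equating its three coefficients to zero gives a system in the unknowns $p,q$ and the parameter $\mu$: from the $F^2$-term, $q(n+1-p)=0$; from the $F^1$-term, $p(n-2)+(n+1)q+q\mu(p-n)+1=0$; and from the constant term, $-p\mu(n-2)-nq\mu+1=0$. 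The constant-term equation already excludes $\mu=0$, so the lightlike hyperbolic case, and the value $\mu=0$ barred in~(1), carry no harmonic field.

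To solve the system: the $F^2$-equation forces $q=0$ or $p=n+1$. If $q=0$, the remaining two equations read $p(n-2)=-1$ and $p\mu(n-2)=1$, forcing $n\neq2$, $p=1/(2-n)$ and $\mu=-1$: this is alternative~(b) of~(2). If $p=n+1$, eliminating $q$ from the other two equations and dividing by the non-zero factor $n+1$ yields the quadratic $(n-2)\mu^2+(n-3)\mu-1=\bigl((n-2)\mu-1\bigr)(\mu+1)=0$, which degenerates to $\mu=-1$ when $n=2$. The root $\mu=1/(n-2)$ requires $n>2$, is then positive, and forces $q=2-n$; the root $\mu=-1$ forces $q=1-n+1/n$, reducing to $-1/2$ when $n=2$.

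Collating by the sign of $\mu$ then yields the statement, with metric (non-)uniqueness as a by-product. If $\mu\geqs0$, only the branch $p=n+1$ with $\mu=1/(n-2)$ occurs, so $\s$ is harmonic precisely for $n>2$ and $\mu=1/(n-2)$, with the single pair $(n+1,2-n)$. If $\mu<0$, every branch forces $\mu=-1$; when $n=2$ only $p=n+1$ survives, giving the unique pair $(3,-1/2)$, and when $n>2$ both $p=n+1$ (pair $(n+1,1-n+1/n)$) and $q=0$ (pair $(1/(2-n),0)$) survive and are distinct, so there are exactly these two. Conversely, each displayed triple $(\mu;p,q)$ satisfies all three equations, so the associated $\s$ is genuinely harmonic. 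The only delicate points are the sign conventions in computing $\Delta F$ (the paper's $\Delta$ is the positive rough Laplacian $\nabla^*\nabla$ on functions), the bookkeeping in expanding \eqref{eigenharm}, and treating $n=2$ separately, where the quadratic in $\mu$ loses its leading term; I do not expect any conceptual difficulty beyond this.
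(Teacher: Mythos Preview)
Your proof is correct and follows essentially the same route as the paper: reduce \eqref{eigenharm} to a polynomial in $F$, set the three coefficients to zero, and solve the resulting system in $(p,q,\mu)$. The only cosmetic differences are that the paper computes $\Delta F$ via the divergence identity \eqref{divident} rather than Weitzenb\"ock (same formula results), and in the branch $p=n+1$ the paper adds the two remaining equations to obtain the factorisation $(n-2+q)(1+\mu)=0$ directly, whereas you eliminate $q$ first and factor the quadratic $(n-2)\mu^2+(n-3)\mu-1$; these are equivalent manipulations.
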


\begin{proof}
By \eqref{confgrad}, \eqref{confdiv} and \eqref{conflength}:
$$
\Delta F
=-\Div\nabla F=\e\Div(\a\/\s)
=\e|\s|^2-n\a^2 
=\e(2(n+1)F-n\mu),
$$
where we have used the general Riemannian identity:
\begin{equation}
\Div(fX)=f\Div X+df(X).
\label{divident}
\end{equation}
Using this and Lemma \ref{lemconf}, the harmonicity equation \eqref{eigenharm} may therefore be written:
\begin{equation*}
(p+q+2qF)(2(n+1)F-n\mu)
+2p(1+qF)(\mu-2F)
+2(1-p)F+1=0.
\end{equation*}
This is polynomial in $F$, the coefficients of which yield the following three equations:
\begin{align}
(n+1-p)q&=0, 
\label{confquad} \\
(n-2)p+(n+1)q+(p-n)q\mu&=-1, 
\label{conflin} \\
((2-n)p-nq)\mu&=-1.
\label{confconst}
\end{align}
If $q=0$ then $(2-n)p=1$ and $\mu=-1$.  Otherwise $p=n+1$, whence adding \eqref{conflin} and \eqref{confconst} yields:
$$
(n-2+q)(1+\mu)=0.
$$
If $\mu=-1$ then $nq=1+n-n^2$, by \eqref{confconst}.  Otherwise $q=2-n$, and $(n-2)\mu=1$ by \eqref{confconst}.
\end{proof}

In view of Proposition \ref{propharm}, the metric parameters $(1/(2-n),0)$ cannot occur on the $n$-sphere.  All other harmonic conformal gradient fields $\s$ have metric parameter $q<0$.  As remarked in \cite{BLW1}, in the spherical case $\s$ is $q$-Riemannian, with $\s(M)$ touching the boundary of the Riemannian ball bundle on the equator of $\s$.  In the hyperbolic case the opposite occurs: if $\s$ has no zeros then $\s(M)$ lies entirely outside the Riemannian ball bundle, and touches its boundary on the equator of $\s$.  
The case $n=2$ is distinctive, for by \cite[Theorem 2.8 and Theorem 3.5]{BLW2} the metric parameters $(3,-1/2)$ endow the fibres (resp. total space) of the Riemannian ball bundle with positive sectional (resp. scalar) curvature. 

\section{Killing Fields}\label{seckill}

\par
Let $\s$ be a Killing field on $M$.  Then $\s$ is an eigenfunction of the rough Laplacian; for by a general curvature identity for Killing fields:
\begin{equation}
\nabla^*\nabla\s=\Ric(\s)=\e(n-1)\s.
\label{killrough}
\end{equation}
For computational purposes we view $\s$ as the restriction to $M$ of a skew-symmetric linear transformation $A$ of $\V$; thus:
\begin{equation}
\s(x)=A(x),
\quad\text{for all $x\in M$.}
\label{killfield}
\end{equation}
In the spherical case, the following result may be deduced without computation from the Divergence Theorem.  However the hyperbolic case requires the Laplacian $\Delta F$, which in both cases will be required for later results.

\begin{prop}\label{propkill1}
Every harmonic Killing field $\s$ on $M$ is preharmonic.
\end{prop}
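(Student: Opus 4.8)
The plan is to mimic the computation in Lemma~\ref{lemconf}, replacing the conformal-gradient identities \eqref{confcov}--\eqref{confrough} by their Killing-field analogues. First I would record the basic identities for $\s=A|_M$ with $A$ skew-symmetric on $\V$. Using the Gauss formula \eqref{gauss} together with skew-symmetry of $A$ and $\<Ax,x\>=0$, one gets $\nab X\s = (DA)(X) + \e\<Ax,X\>x = A(X) + \e\<Ax,X\>x$ (the directional derivative of the linear map $A$ is $A$ itself), and hence $|\s|^2 = |Ax|^2$ so $2F=\<Ax,Ax\>$. Differentiating, $\nabla F$ is obtained from $\<\nabla F,X\> = \<\nab X\s,\s\> = \<A(X),Ax\> = -\<A^2 x, X\>$ (again using skew-symmetry), which identifies $\nabla F$ as the tangential part of $-A^2 x$; since $x$ is a zero of... more precisely, writing $-A^2x = (\text{tangential}) + \e\<-A^2x,x\>x = \nabla F + \e|Ax|^2 x$, we get $\nabla F = -A^2x - \e\,2F\,x$.

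Next I would compute $\nab{\nabla F}\s$. By the displayed formula $\nab X\s = A(X)+\e\<Ax,X\>x$ applied with $X=\nabla F$, and using $\<Ax,\nabla F\> = \<Ax,-A^2x\> = 0$ (skew-symmetry makes $\<Ax,A^2x\>=-\<A^2x,A^2x\>$... so this needs care: $\<Ax,A^2x\> = -\<A^2x,A^2x\>$, which is $-|A^2x|^2$, not obviously zero). So in fact $\nab{\nabla F}\s = A(\nabla F) + \e\<Ax,\nabla F\>x = A(-A^2x - \e\,2F\,x) + \e\<Ax,-A^2x\>x = -A^3x - \e\,2F\,Ax + \e\,|A^2x|^2\,x$. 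Meanwhile $\s = Ax$ has no normal component, so the claim that $\s$ is preharmonic, i.e. $\nab{\nabla F}\s = \z\s$ for some function $\z$, is equivalent to two things: the normal component $\e|A^2x|^2 x$ must vanish along with $-A^3x$ being collinear with $Ax$, OR — and this is the real point — harmonicity must be used. So the argument is not purely algebraic as in Lemma~\ref{lemconf}; I expect one must invoke that $\s$ is \emph{harmonic}, not merely Killing, extract the component of \eqref{harmeqn} normal to the orbit directions, and deduce \eqref{preharm}.

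Concretely, the strategy I would follow: since $\s$ is an eigenfunction of the rough Laplacian by \eqref{killrough}, the field is automatically $0$-preharmonic, and by the discussion around \eqref{preharm}--\eqref{spinn} it is $p$-preharmonic for all $p$ precisely when $\nab{\nabla F}\s = \z\s$ with $|\s|^2\z = |\nabla F|^2$. So it suffices to show $\nab{\nabla F}\s$ is pointwise collinear with $\s$. From the computation above, $\nab{\nabla F}\s + \e\,2F\,\s = -A^3x + \e|A^2x|^2x$; I would show the right-hand side is a multiple of $\s=Ax$ by decomposing $\V$ into the $A$-invariant $2$-planes (and kernel) on which $A$ acts as a rotation with frequency — the ``twist'' — $\lambda_j$, writing $x=\sum_j x_j$ accordingly. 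On each such plane $A^2 = -\lambda_j^2 I$ (spherical case) with the Lorentzian block requiring the sign bookkeeping via $\e$; then $A^2x = -\sum_j\lambda_j^2 x_j$, $A^3x = -\sum_j\lambda_j^2 Ax_j$, and $|A^2x|^2 = \sum_j \lambda_j^4 |x_j|^2$. Collinearity of $-A^3x+\e|A^2x|^2x$ with $Ax=\sum_j Ax_j$ then forces a relation among the $\lambda_j$'s unless... — and here is where harmonicity enters: the harmonicity equation \eqref{harmeqn}, with $T_p(\s)$ given by \eqref{harmterms} and $\nabla^*\nabla\s = \e(n-1)\s$ known, says $(1+|\s|^2)\e(n-1)\s + 2p\,\nab{\nabla F}\s$ is collinear with $\s$, hence $\nab{\nabla F}\s$ is collinear with $\s$ (for $p\neq 0$; the $p=0$ case is the trivial $0$-preharmonicity). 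That is exactly preharmonicity. The main obstacle, then, is purely bookkeeping: setting up the $\e$-adapted block decomposition of the skew (Lorentzian-skew) transformation $A$ so that the hyperbolic case — where one block may be a boost rather than a rotation and $A^2$ has the opposite sign — is handled uniformly, and confirming that $|\s|^2\z=|\nabla F|^2$ comes out consistently; I would expect to cross-check this against the eigenvalue identity \eqref{spinn} rather than re-derive $|\nabla F|^2$ from scratch.
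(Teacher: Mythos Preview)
Your key observation is correct and is indeed how the paper proceeds: since $\nabla^*\nabla\s=\e(n-1)\s$, the harmonicity equation \eqref{harmeqn} with metric parameter $p\neq0$ forces $\nab{\nabla F}\s$ to be pointwise collinear with $\s$, which is preharmonicity. The paper's first line is essentially ``we need to show that, if non-trivial, $\s$ cannot have metric parameter $p=0$''.

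The gap is your treatment of $p=0$. You dismiss it as ``the trivial $0$-preharmonicity'', but $0$-preharmonicity is \emph{not} preharmonicity: the latter is the stronger condition \eqref{preharm}, equivalent to $p$-preharmonicity for $p\neq0$. An eigenfunction is always $0$-preharmonic, so nothing is gained there. The paper actually does work here: it shows $(0,q)$-harmonicity would force $q\,\Delta F=\e(1-n)$, then computes $\Delta F=\e(n+1)|\s|^2-|A|^2$ explicitly (this is equation \eqref{killlap}), and concludes $|\s|^2$ would be constant, contradicting $q\,\Delta F=\e(1-n)\neq0$. Without ruling out $p=0$ you have not established preharmonicity.

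Two smaller points. First, your exploratory computation contains a slip: $\<Ax,A^2x\>=\<v,Av\>$ with $v=Ax$, which vanishes by skew-symmetry; it is not $-|A^2x|^2$. Consequently $\nab{\nabla F}\s=-A^3x-\e\,2F\,\s$ with no normal term, matching \eqref{killcov}. Second, your earlier idea of using the block decomposition of $A$ to prove $\nab{\nabla F}\s$ collinear with $\s$ \emph{directly} would not work: that collinearity is equivalent (see \eqref{evalue}) to $A^3=\lambda A$, which for a general Killing field fails; it holds only in the balanced (or, on $H^n$, translational) case, as Propositions~\ref{propkill2}--\ref{propkill4} show. So it is essential to use harmonicity, and essential to dispose of $p=0$.
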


\begin{proof}
We need to show that, if non-trivial, $\s$ cannot have metric parameter $p=0$.
It follows from \eqref{harmeqn}, \eqref{harmterms} and \eqref{killrough} that $\s$ is $(0,q)$-harmonic if and only if:
\begin{equation}
q\/\Delta F=\e(1-n).
\label{killpzero}
\end{equation}
To compute $\Delta F$ we first apply the Gauss formula \eqref{gauss} to \eqref{killfield}:
\begin{equation}
\nab X\s
=A(X)+\e\<X,\s\>x
=A(X)-\e\<A(X),x\>x.
\label{killdiff}
\end{equation}
Then, summing over an orthonormal tangent frame $\{E_i\}$:
\begin{align}
\nabla F
&=\<\nab{E_i}\s,\s\>E_i
=\<A(E_i),A(x)\>E_i
\notag \\
&=-A^2(x)+\e\<A^2(x),x\>x
=-A^2(x)-\e|\s|^2 x.
\label{killgrad}
\end{align}
Using \eqref{gauss} once again:
\begin{align}
\nab X(\nabla F)
&=\diffop DX(\nabla F)+\e\<X,\nabla F\>x \notag \\
&=-A^2(X)+\e\<A^2(X),x\>x-\e|\s|^2X,
\intertext{from which:}
\Delta F=-\Div\nabla F
&=-\<\nab{E_i}(\nabla F),E_i\>
=\e n|\s|^2-\<A(E_i),A(E_i)\> \notag \\
&=\e(n+1)|\s|^2-|A|^2,
\label{killlap}
\end{align}
where $|A|^2$ denotes the Lorentzian norm when $\e=-1$ (see \eqref{lorentzip} below; however it suffices to note that this is constant). 
If $\s$ is $(0,q)$-harmonic, comparison of \eqref{killpzero} with \eqref{killlap} shows that $\s$ has constant length, which contradicts \eqref{killpzero}.
\end{proof}

\par
Before proceeding, we pause to summarise some relevant geometric features of Killing fields on space forms.  In the spherical case there exists an orthonormal basis $\{e_i\}$ of Euclidean $\R^{n+1}$, a positive integer $r\leqs (n+1)/2$ and real numbers $\w_1\geqs\w_2\geqs\cdots\geqs\w_r>0$ such that for all $1\leqs i\leqs r$ and $2r<j\leqs n+1$:
\begin{equation}
A(e_{2i-1})=\w_ie_{2i},
\qquad
A(e_{2i})=-\w_ie_{2i-1},
\qquad
A(e_j)=0.
\label{normal1}
\end{equation}
The string $(\w_1,\dots,\w_r)$ determines $A$ (as a linear endomorphism), hence $\s$ (as a vector field), up to congruence.  Geometrically, the flow of $\s$ is a superposition of rotations around the $r$ orthogonal great circles cut out by the $2$-planes $e_{2i-1}\wedge e_{2i}$ of $\R^{n+1}$, with angular frequencies $\w_1,\dots,\w_r$.  We refer to $r$ as the {\sl rotational rank\/} of $\s$, and the $\w_i$ as the {\sl twists\/} of $\s$.  In particular, $\w_1=\|\s\|_\infty$.  If $2r\leqs n$ then $\ker A$ intersects $S^n$ in a great $(n-2r)$-sphere, which we refer to as the {\sl axis\/} of $\s$. 
If $\w_1=\dots=\w_r=\w$ then we say that $\s$ is {\sl balanced\/} with twist $\w$.  In this case $\s$ is congruent to $\w\Sigma_r$ where:
\begin{equation}
\Sigma_r(x)=(-x_2,x_1,\dots,-x_{2r},x_{2r-1},0,\dots,0).
\label{hopf}
\end{equation}
If $n$ is odd and $2r=n+1$ then $\Sigma_r$ is the standard Hopf field; otherwise $\Sigma_r$ restricts to a Hopf field on the great $(2r-1)$-sphere orthogonal to its axis.

\par
In the hyperbolic case, fix $w\in H^n$ and set $v=A(w)=\s(w)\in T_wH^n$.  Let $R=R_w\colon\V\to\V$ be the orthogonal projection of $A$ into $T_wH^n$, defined:
\begin{align*}
R(u)
&=A(u+\<u,w\>w)+\<A(u+\<u,w\>w),w\>w \\
&=A(u)+\<u,w\>v-\<u,v\>w,
\end{align*}
for all $u\in\V$.
Then $R(w)=0$ and the restriction of $R$ to $T_wH^n$ is a skew-symmetric endomorphism of a Euclidean vector space, which may therefore be put into normal form \eqref{normal1}, with $2r\leqs n$.  Now define $T=T_w=A-R_w$:
\begin{equation}
T(u)=\<u,v\>w-\<u,w\>v.
\label{killtrans}
\end{equation}
Since $R$ and $T$ are skew-symmetric transformations of $\V$, their restrictions to $H^n$ are also Killing vector fields, which we refer to as the {\sl rotational\/} (resp. {\sl translational\/}) {\sl part\/} of $\s$ at $w$.  We also refer to $r$ as the {\sl rotational rank\/} of $\s$ at $w$, and  $\w_1,\dots,\w_r$  as the {\sl twists of $\s$ at $w$.}  The dependence of the rotational rank and twists  on $w$ is constrained as follows.  Let $\<\text{-},\text{-}\>_L$ denote the indefinite inner product induced on the space of Lorentzian skew-symmetric endomorphisms of $\V$:
$$
\<A_1,A_2\>_L=\trace \bigl(A_1\circ (A_2)^\dag\bigr),
$$
where $A^\dag$ denotes the Lorentz adjoint.  In terms of a Lorentz-orthonormal frame containing $w$:
\begin{equation}
\<A_1,A_2\>_L=\textstyle\sum_i\<A_1(e_i),A_2(e_i)\>-\<A_1(w),A_2(w)\>,
\label{lorentzip}
\end{equation}
and it follows that:
\begin{align*}
|A|_L^{\;2}=|R|_L^{\;2}-|T|_L^{\;2}
=2r\w^2-2\tau^2,
\end{align*}
where $\tau=|v|$ and $\w$ is the quadratic mean twist: $\w^2=(\w_1^{\,2}+\cdots+\w_r^{\,2})/r$.  (The second equation follows from the invariance of $\<\text{-},\text{-}\>_L$ under Lorentzian congruences.)  Thus the function $r\w^2-\tau^2$ is constant over $H^n$.  Its sign determines the nature of $\s$.  If positive, then $\s$ is an {\sl infinitesimal rotation.}  In this case $\ker(A)$ is timelike, and $\ker(A)\cap H^n$ is a totally geodesic submanifold of $H^n$ which we refer to as the {\sl axis of rotation.}  The translational part of $\s$ vanishes at all points on the axis.  Furthermore the rotational rank and twists of $\s$ are invariant along the axis, and determine $\s$ up to congruence.  In particular, if $\w_1=\dots=\w_r=\w$ along the axis then we say that $\s$ is {\sl balanced\/} with twist $\w$ and rotational rank $r$, in which case $\s$ is congruent to $\w\Sigma_r$ with $\Sigma_r$ defined by \eqref{hopf}.  However since $2r<n+1$, in contrast to the spherical case $\Sigma_r$ has at least one zero, and has precisely one zero when $2r=n$.  We refer to $\Sigma_r$, in both the spherical and hyperbolic cases, as a {\sl generalised Hopf field.}  If $r\w^2\leqs\tau^2$ then $\ker(A)$ is spacelike or lightlike, so $\s$ has no zeros.  When the inequality is strict, $\s$ is an {\sl infinitesimal translation.}  In this case there is a unique geodesic of $H^n$ that is an integral curve of $\s$, which we refer to as the {\sl axis of translation.}  The rotational part of $\s$ vanishes at all points on the axis, and the length of $\s$ along the axis, which is necessarily constant, determines $\s$ up to congruence.  Finally, if $r\w^2=\tau^2$ then $\s$ is of {\sl parabolic type.}  

\medskip
Our next step, in the light of Proposition \ref{propkill1}, is to explore the implications of preharmonicity.  We therefore compute, using \eqref{killdiff} and \eqref{killgrad}:
\begin{align}
\nab{\nabla F}\s
&=A(\nabla F)-\e\<A(\nabla F),x\>x
=-A^3(x)-\e|\s|^2\s.
\label{killcov}
\end{align}
(Note that since $A^3$ is also skew-symmetric, its restriction to $M$ is indeed a (Killing) vector field on $M$.)  
Thus $\s$ is preharmonic if and only if there exists a smooth function $\lambda\colon M\to\R$ such that:
\begin{equation}
A^3(x)-\lambda(x)A(x)=0,
\quad\text{for all $x\in M$.}
\label{killnec1}
\end{equation}
Differentiation of \eqref{killnec1} yields:
\begin{equation}
A^3(X)-\lambda(x)A(X)=d\lambda(X)A(x),
\quad
\text{for all $X\in T_xM$.}
\label{killnec2}
\end{equation}
It follows from \eqref{killnec1} and \eqref{killnec2} that for each $x\in M$ the linear map $A^3-\lambda(x)A$ has rank at most one.  But non-trivial skew-symmetric transformations of $\V$ have rank at least two;  so $A^3-\lambda(x)A=0$ for all $x\in M$.  Since $A(x)$ vanishes at worst on the intersection of $M$ with an $(n-1)$-dimensional subspace of $\R^{n+1}$---which has measure zero\/---\/it then follows from \eqref{killnec2} that $d\lambda=0$ generically, hence by continuity identically.  Therefore by connectedness $\lambda$ is constant.  We conclude that $\s$ is preharmonic if and only if:
\begin{equation}
A^3-\lambda A=0,
\quad\text{for some $\lambda\in\R$.}
\label{evalue}
\end{equation}
Comparison with \eqref{killcov} shows that the spinnaker is:
\begin{equation}
\z=-(\lambda+\e|\s|^2).
\label{killspinn}
\end{equation}
We first explore the implications of this for the sphere.  

\begin{prop}\label{propkill2}
Let $\s$ be a Killing field on $S^n$.  Then $\s$ is preharmonic if and only if
$\s$ is balanced.  
\end{prop}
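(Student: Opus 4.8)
The plan is to characterise preharmonicity of a Killing field on $S^n$ entirely in terms of the normal form \eqref{normal1}, via the algebraic condition \eqref{evalue}. First I would note that, by Proposition \ref{propkill1} and the discussion culminating in \eqref{evalue}, $\s$ is preharmonic if and only if $A^3=\lambda A$ for some real constant $\lambda$. Writing $A$ in the block-diagonal normal form \eqref{normal1}, each $2$-plane $e_{2i-1}\wedge e_{2i}$ is $A$-invariant and $A$ acts there as rotation by $\w_i$; consequently $A^3$ acts on that plane as rotation scaled by $\w_i^{\,3}$, i.e. $A^3=-\w_i^{\,2}\,A$ on that block (and $A^3=0=A$ on $\ker A$). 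Hence $A^3-\lambda A$ is again block-diagonal, vanishing on $\ker A$ and acting as $-(\w_i^{\,2}+\lambda)A$ on the $i$-th rotation plane. Therefore $A^3=\lambda A$ holds if and only if $\w_i^{\,2}=-\lambda$ for every $i$ with $\w_i\neq0$; since all $\w_i>0$ by \eqref{normal1}, this forces $\w_1^{\,2}=\cdots=\w_r^{\,2}$, i.e. $\w_1=\cdots=\w_r$, with $\lambda=-\w_1^{\,2}$.

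Conversely, if $\s$ is balanced with twist $\w$ then $A$ is $\w$ times a partial Hopf structure, so $A^3=-\w^2A$, which is \eqref{evalue} with $\lambda=-\w^2$; thus $\s$ is $p$-preharmonic for all $p\neq0$, and trivially $0$-preharmonic, hence preharmonic. Combining the two directions gives the stated equivalence.

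The only point requiring a little care is the book-keeping when $\s$ is not balanced: one must check that no single constant $\lambda$ can simultaneously satisfy $\w_i^{\,2}=-\lambda$ for distinct twist values, which is immediate since the $\w_i$ are positive reals and $\w_i\mapsto\w_i^{\,2}$ is injective on $(0,\infty)$. I do not expect any genuine obstacle here: the entire argument is linear algebra on the normal form \eqref{normal1}, and the hard analytic work (reducing harmonicity to the pointwise condition \eqref{killnec1}, then to the constant-coefficient identity \eqref{evalue}) has already been done in the lead-up to the proposition.
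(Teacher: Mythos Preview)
Your argument is correct and follows essentially the same route as the paper: both reduce preharmonicity to the algebraic identity \eqref{evalue} and then read off the conclusion from the normal form \eqref{normal1}, the only difference being that the paper phrases this as ``$\im(A)$ is a single eigenspace of $A^2$'' while you compute $A^3-\lambda A$ block by block. One small quibble: the reference to Proposition~\ref{propkill1} is unnecessary (that result concerns harmonic $\Rightarrow$ preharmonic), whereas what you actually use is just the derivation of \eqref{evalue} from \eqref{killcov}--\eqref{killnec2}.
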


\begin{proof}
It follows from \eqref{evalue} that $\s$ is preharmonic if and only if $\im(A)$ is an eigensubspace of the symmetric operator $A^2$.
Since $A$ is skew-symmetric with respect to a Euclidean metric, $\im(A)$ and $\ker(A)$ are (orthogonal) complementary subspaces of $\V$.  In particular $A$, and hence $A^2$, restricts to an isomorphism of $\im(A)$, which is therefore the direct sum of all the non-zero eigenspaces of $A^2$.  Thus $\s$ is preharmonic if and only if $A^2$ has a unique non-zero eigenvalue.  From the normal form \eqref{normal1} the non-zero eigenvalues of $A^2$ are $-\w_i^{\,2}$, $1\leqs i\leqs r$.  Hence $\s$ is preharmonic if and only if $\s$ is balanced.  
\end{proof}

\begin{corollary}\label{corkill1}
Let $\s$ be a Killing field of maximal rotational rank on an odd-dimensional sphere.  Then $\s$ is preharmonic if and only if $\s$ is a  constant multiple of a Hopf field.  Therefore if $\s$ is preharmonic then $\s$ is harmonic.
\end{corollary}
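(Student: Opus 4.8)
The plan is to reduce everything to Proposition \ref{propkill2} together with the normal-form description of balanced Killing fields on odd spheres recorded after \eqref{hopf}. First, by Proposition \ref{propkill2}, $\s$ is preharmonic if and only if it is balanced, say with common twist $\w>0$. Since $n$ is odd and the rotational rank is maximal, $r=(n+1)/2$, so $2r=n+1$; hence a balanced field with twist $\w$ is congruent to $\w\/\Sigma_r$, and in this case the remark following \eqref{hopf} identifies $\Sigma_r$ as the standard Hopf field on $S^n$. Thus ``$\s$ preharmonic'' is equivalent to ``$\s$ balanced'', which is equivalent to ``$\s$ is a constant multiple of a Hopf field (up to congruence)''. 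Conversely, any constant multiple of a Hopf field is balanced, hence preharmonic, so this establishes the asserted equivalence.

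It then remains to verify the final assertion: a non-trivial constant multiple $\s$ of a Hopf field is in fact harmonic. Since $|\s|$ is constant, $F=\tfrac12|\s|^2$ is constant, so $\nabla F=0$ and $\Delta F=0$. Feeding this into \eqref{harmeqn}--\eqref{harmterms} and using the Killing identity \eqref{killrough} (with $\e=1$), the vanishing of $\tau_{p,q}(\s)$ collapses to the scalar condition $(1+|\s|^2)(n-1)=p\,|\nabla\s|^2$, with $q$ unconstrained. From \eqref{killdiff}, completing $\{E_i\}$ to an orthonormal basis of $\V$ and using skew-symmetry of $A$, one computes $|\nabla\s|^2=|A|^2-2|\s|^2$, which for $\s=\w\/\Sigma_r$ equals $(n-1)\w^2>0$; hence the condition is solved by $p=1+\w^{-2}$ (and any $q$), so $\s$ is $(p,q)$-harmonic. (Alternatively, one may simply invoke the well-known harmonicity of the unit Hopf field \cite{Wie,Wood2} together with the scaling remark of the Introduction for fields of constant length.)

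There is no genuine obstacle here: the statement is essentially two applications of Proposition \ref{propkill2} combined with the fact that, on an odd sphere, a balanced Killing field of maximal rotational rank is a multiple of a Hopf field. The only step needing a short computation is the last sentence, and the one point to watch is that $|\nabla\s|^2$ is a strictly positive constant --- which holds precisely because $\s$ is non-trivial --- so that the reduced scalar equation can indeed be solved for $p$.
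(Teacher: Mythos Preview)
Your argument is correct and follows the same route the paper leaves implicit: the equivalence is immediate from Proposition~\ref{propkill2} together with the observation after \eqref{hopf} that a balanced Killing field of maximal rank on an odd sphere is a multiple of the Hopf field, and the final assertion follows because Hopf fields (having constant length) are well-known to be harmonic \cite{Wie,Wood2}. Your explicit computation of $|\nabla\s|^2=(n-1)\w^2$ and the resulting metric parameter $p=1+\w^{-2}$ is a nice bonus that the paper does not spell out, but it is consistent with the Introduction's remark that unit fields are $(2,q)$-harmonic.
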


We now scrutinise hyperbolic space.  Choose $w\in H^n$ and decompose $A=R+T$, to obtain the rotational and translational parts of $\s$ at $w$.  Set $v=A(w)\in T_wH^n$.  Our next result is diametrically opposite to the conclusion of Corollary \ref{corkill1}.

\begin{prop}\label{propkill3}
Let $\s$ be an infinitesimal translation of $H^n$.  Then $\s$ is preharmonic, but not harmonic.
\end{prop}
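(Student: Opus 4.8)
The plan is to establish preharmonicity first, then show that the resulting harmonicity PDE has no solution. For preharmonicity, by the criterion \eqref{evalue} it suffices to show that $A^3 = \lambda A$ for some $\lambda \in \R$, where $A$ is the Lorentzian skew-symmetric endomorphism restricting to $\s$. For an infinitesimal translation, I would exploit the decomposition of $\V$ adapted to $A$: since $\s$ is an infinitesimal translation, $\ker(A)$ contains a spacelike (or lightlike) direction, but the structure theory of Lorentzian skew-symmetric operators should let me put $A$ into a normal form in which $A^2$ acts as a scalar on $\im(A)$. Concretely, write $A = R_w + T_w$ at a point $w$ on the axis of translation where the rotational part vanishes, so $A = T_w$ with $T_w(u) = \<u,v\>w - \<u,w\>v$ and $v = \s(w)$. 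Then a direct computation gives $A^2(u) = \<u,v\>v\,\e_v - \<u,w\>w\,\e_w$-type terms — more precisely, using $\<w,w\> = -1$, $\<v,v\> = \tau^2$, $\<v,w\>=0$, one finds $T_w^2$ acts as multiplication by $\tau^2$ on $\mathrm{span}\{v\}$ and by $\tau^2$ on $\mathrm{span}\{w\}$ as well (the signs conspire through $\e$), so $A^3 = \tau^2 A$. Hence $\s$ is preharmonic with $\lambda = \tau^2$, and by \eqref{killspinn} the spinnaker is $\z = -(\tau^2 + \e|\s|^2) = -\tau^2 + |\s|^2$ (recall $\e = -1$ hyperbolically).

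Next I would compute $|\s|^2$ and $\Delta F$ explicitly for the infinitesimal translation, so as to feed everything into \eqref{eigenharm}. From \eqref{killfield} and $A = T_w$, $\s(x) = \<x,v\>w - \<x,w\>v$, hence $|\s|^2 = \<\s,\s\> = \<x,v\>^2\<w,w\> - 2\<x,v\>\<x,w\>\<v,w\> + \<x,w\>^2\<v,v\> = -\<x,v\>^2 + \tau^2\<x,w\>^2$. Since $\<x,w\> \leqs -1$ with equality only at $w$, and using the hyperboloid constraint, one sees $|\s|^2$ is non-constant (it tends to $+\infty$), consistent with Proposition \ref{propkill1} already guaranteeing $p \neq 0$. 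Then $\Delta F$ comes from \eqref{killlap}: $\Delta F = \e(n+1)|\s|^2 - |A|_L^{\,2} = -(n+1)|\s|^2 - (-2\tau^2) = -(n+1)|\s|^2 + 2\tau^2$ (here $r = 0$ so $|A|_L^{\,2} = -2\tau^2$). With $F = \tfrac12|\s|^2$ this is $\Delta F = -2(n+1)F + 2\tau^2$.

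Now substitute into the reduced harmonicity equation \eqref{eigenharm} with eigenvalue $\nu = \e(n-1) = -(n-1)$ from \eqref{killrough}, $\z = -\tau^2 + 2F$, and $\Delta F = -2(n+1)F + 2\tau^2$. This yields a polynomial identity in $F$ that must hold for all values of $F$ in the (unbounded) range of $|\s|^2/2$; equating coefficients of $F^2$, $F$, and the constant term gives three equations in the three unknowns $p$, $q$, $\tau^2$. I expect the $F^2$-coefficient to force $q = 0$ (or a relation incompatible with the others), and then the remaining equations to be inconsistent — in particular a constant-term equation of the form $(\text{something})\tau^2 = -\nu \neq 0$ pinned against a linear-term equation that forces a contradictory value. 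Carrying this linear algebra through is the one place where a sign error would be fatal, so that is the main obstacle: tracking the interplay of $\e = -1$, the Lorentzian norm $|A|_L^{\,2} = -2\tau^2$, and the eigenvalue sign in \eqref{eigenharm}. Once the coefficient comparison is shown to have no real solution $(p,q)$, we conclude $\s$ is preharmonic but not harmonic.
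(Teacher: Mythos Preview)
Your approach is essentially identical to the paper's: choose $w$ on the axis so $A=T_w$, verify $T_w^{3}=\tau^{2}T_w$ directly to obtain preharmonicity with $\lambda=\tau^{2}$ and spinnaker $\z=2F-\tau^{2}$, then feed $\Delta F=2(\tau^{2}-(n+1)F)$ from \eqref{killlap} into \eqref{eigenharm} and compare coefficients of powers of $F$. One small correction to your expected endgame: the $F^{2}$-coefficient gives $(n+1-p)q=0$, but the lower-order coefficients force $q\neq0$ (since $n>1$), hence $p=n+1$; the contradiction then arises by adding the linear and constant equations to obtain $1+\tau^{2}=0$.
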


\begin{proof}
Suppose $w$ lies on the axis of $\s$.  Then $A=T$ and it follows from \eqref{killtrans} that for all $u\in\V$:
\begin{align}
T^3(u)
&=T^2(\<u,v\>w-\<u,w\>v) 
=T(\<u,v\>v-\<u,w\>|v|^2w) \notag \\
&=\<u,v\>\tau^2w-\<u,w\>\tau^2v
=\tau^2\/T(u),
\label{tcubed}
\end{align}
where $\tau=|v|$.  Thus \eqref{evalue} is satisfied with $\lambda=\tau^2$, so $\s$ is preharmonic with:
\begin{equation*}
\z=2F-\tau^2,
\end{equation*}
by \eqref{killspinn}.
Furthermore by \eqref{killlap}:
$$
\Delta F=2(\tau^2-(n+1)F).
$$
Harmonicity equation \eqref{eigenharm} is therefore polynomial in the non-constant smooth function $F$ (indeed, no non-trivial Killing field on a manifold of strictly negative Ricci curvature has constant length), and gathering coefficients yields:
\begin{align*}
(n+1-p)q&=0, \\
(n+1+(p-2)\tau^2)q&=1-n, \\
1-n+2q\tau^2&=0.
\end{align*}
It follows (from the second or third equation) that $q\neq0$, therefore (from the first) $p=n+1$.  Adding the second and third equations then yields $1+\tau^2=0$, contradicting the fact that $v$ is spacelike.
\end{proof}

\begin{prop}\label{propkill4}
Let $\s$ be an infinitesimal rotation of $H^n$.  Then $\s$ is preharmonic if and only if $\s$ is balanced.
\end{prop}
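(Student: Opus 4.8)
The plan is to adapt the spherical argument of Proposition~\ref{propkill2}, which rests on \eqref{evalue}, but to carry it out at a point on the axis of rotation so as to recover the orthogonal splitting $\V=\im(A)\oplus\ker(A)$ that, in the Lorentzian setting, is not automatic from skew-symmetry. Since $\s$ is an infinitesimal rotation, $\ker(A)$ is timelike, so after rescaling we may pick $w\in H^n$ on the axis; then $v=A(w)=0$, so in the decomposition preceding the proposition the translational part $T_w$ vanishes and $A=R_w$. Because $A$ is Lorentz-skew and $A(w)=0$, for every $u\in T_wH^n$ we get $\<A(u),w\>=-\<u,A(w)\>=0$, so $A$ preserves the spacelike hyperplane $T_wH^n=w^\perp$, on which it restricts to a genuine skew-symmetric endomorphism of a Euclidean vector space. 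First I would put this restriction into the normal form \eqref{normal1}, with twists $\w_1\geqs\cdots\geqs\w_r>0$; by the remarks preceding the proposition these are the twists of $\s$ along the axis, hence the congruence invariants of $\s$, so the statement ``$\s$ is balanced'' is meaningful.

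Next I would feed this into \eqref{evalue}. The decomposition $\V=\R w\oplus T_wH^n$ is $A$-invariant, with $A$ (hence $A^2$) vanishing on the first summand, so $\im(A)$ lies inside $T_wH^n$ and equals the sum of the $2$-planes $e_{2i-1}\wedge e_{2i}$, which is precisely the direct sum of the non-zero eigenspaces of $A^2$, the corresponding eigenvalues being $-\w_i^{\,2}$. In particular $A^2$ restricts to an automorphism of $\im(A)$, and now the spherical reasoning applies essentially verbatim: $A^3=\lambda A$ for some $\lambda\in\R$ if and only if $A^2$ acts as the single scalar $\lambda$ on $\im(A)$, which forces $-\w_1^{\,2}=\cdots=-\w_r^{\,2}=\lambda<0$, i.e.\ $\s$ is balanced with twist $\w=\sqrt{-\lambda}$; the converse is immediate from \eqref{normal1}. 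By \eqref{evalue} this is exactly the preharmonicity criterion, and \eqref{killspinn} then records the spinnaker $\z=\w^2+2F$ in the balanced case.

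I do not expect a substantive obstacle. The only delicate point is the one already flagged: a non-trivial Lorentz-skew $A$ need not satisfy $\im(A)\oplus\ker(A)=\V$, so the reduction to a Euclidean skew operator must be performed at an axis point rather than at an arbitrary $w$, and one must observe that the twists so obtained are the canonical ones (invariant along the axis). With that in hand the proof is a direct transcription of the argument for Proposition~\ref{propkill2}.
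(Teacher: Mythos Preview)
Your proposal is correct and follows essentially the same approach as the paper: choose $w$ on the axis so that $A=R_w$, decompose $\V=\R w\oplus T_wH^n$, observe that $A$ restricts to a Euclidean skew-symmetric endomorphism of the spacelike hyperplane, and then invoke the argument of Proposition~\ref{propkill2}. The paper's proof is simply a terser version of what you have written; your additional remarks about the spinnaker and the need to work at an axis point (to recover the splitting $\im(A)\oplus\ker(A)=\V$) are correct elaborations. One minor quibble: no ``rescaling'' is needed to pick $w$ on the axis---since $\ker(A)$ is timelike it meets $H^n$, and any such intersection point will do.
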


\begin{proof}
We may orthogonally decompose $\V=L\oplus T_wH^n$ where $L\subset\V$ is the line through $w$.  Suppose $w$ lies on the axis of $\s$.  Then $A=R$, hence $L\subset\ker(A)$ and the restriction of $A$ to $T_wH^n$ is a skew-symmetric transformation of a Euclidean vector space.  The argument of Proposition \ref{propkill2} then applies.
\end{proof}

We now show that there are no harmonic Killing fields of parabolic type.

\begin{prop}\label{propkill5}
Let $\s$ be a Killing field on $H^n$.
If $\s$ is harmonic then $\s$ is an infinitesimal rotation.
\end{prop}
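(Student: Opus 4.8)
The plan is to rule out the two remaining classes of Killing fields on $H^n$: infinitesimal translations and fields of parabolic type. The first is already handled—Proposition \ref{propkill3} shows infinitesimal translations are preharmonic but not harmonic—so the essential work is to dispatch the parabolic case, and then to assemble the trichotomy. Recall that the nature of $\s$ is governed by the sign of the constant function $r\w^2-\tau^2$ on $H^n$: positive gives an infinitesimal rotation, negative an infinitesimal translation, and zero a field of parabolic type.

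First I would treat the parabolic case directly. Fix $w\in H^n$, decompose $A=R+T=R_w+T_w$ with rotational part $R$ and translational part $T$ as in \eqref{killtrans}, so that $\tau=|v|$ where $v=A(w)$, and $r\w^2=\tau^2$. Since $\s$ is harmonic it is preharmonic by Proposition \ref{propkill1}, so \eqref{evalue} holds: $A^3=\lambda A$ for some $\lambda\in\R$. The strategy is to compute $A^2$ and $A^3$ applied to a well-chosen vector and extract a contradiction. Evaluating \eqref{evalue} at $w$ gives $A^2 v=\lambda v$, i.e.\ $v$ is an eigenvector of $A^2$ with eigenvalue $\lambda$. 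Now apply $A$ once more to understand how $A^2$ mixes $L=\R w$, $\R v$, and the orthogonal complement in $T_wH^n$: a short computation from the Gauss-type formulas for $R$ and $T$ shows $A^2$ does not preserve the splitting, and in fact $A^2w$ has a component along $v$ of size $\tau^2$ and the $w$-component picks up a defect measuring the failure of $A$ to be diagonalisable. The key point is that in the parabolic case $A$ (as a Lorentzian skew endomorphism) has a nontrivial Jordan block on the lightlike $2$-plane spanned by $w\pm v/\tau$ (schematically), so $A^2$ is \emph{not} semisimple on $\im(A)$; but \eqref{evalue} forces $A^2$ restricted to $\im(A)$ to be $\lambda\,\mathrm{id}$, hence semisimple. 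Comparing these yields $\tau=0$, contradicting the assumption that $v$ is spacelike (a nontrivial parabolic field has $\tau>0$). This is the step I expect to be the main obstacle: one must carry out the $A^2$ and $A^3$ computations carefully in the indefinite ambient metric, keeping track of the lightlike directions, rather than borrowing the clean Euclidean argument of Proposition \ref{propkill2}. An alternative, perhaps cleaner, route that avoids Jordan form: compute $\Delta F$ from \eqref{killlap} in the parabolic case—here $|A|_L^{\,2}=2r\w^2-2\tau^2=0$, so $\Delta F=\e(n+1)|\s|^2$—together with the spinnaker $\z=-(\lambda+\e|\s|^2)$ from \eqref{killspinn}, feed both into the reduced harmonicity PDE \eqref{eigenharm} (with $\e=-1$, $\nu=\e(n-1)=1-n$), and collect coefficients of the non-constant function $F$ exactly as in the proof of Proposition \ref{propkill3}; the resulting linear system in $p,q$ should again be inconsistent, forcing a contradiction with $\tau$ spacelike. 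I would pursue whichever of these is shorter, likely the PDE approach since the machinery is already set up.

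Finally I would close the argument by exhaustion: a Killing field on $H^n$ is exactly one of an infinitesimal rotation, an infinitesimal translation, or of parabolic type, according to the sign of the constant $r\w^2-\tau^2$; Proposition \ref{propkill3} eliminates infinitesimal translations and the above eliminates parabolic type; hence a harmonic Killing field on $H^n$ must be an infinitesimal rotation. (Trivial fields $\s\equiv0$ are vacuously rotations with the axis all of $H^n$, so no special case is needed.)
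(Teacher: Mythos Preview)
Both suggested routes have genuine gaps.

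The Jordan-block argument rests on the claim that for parabolic $A$ one has ``$A^2$ not semisimple on $\im(A)$'', contradicting $A^2|_{\im A}=\lambda\,\mathrm{id}$. But this is false. Take the basic nilpotent parabolic on $H^2$: with $A(e_1)=a(e_2+e_3)$, $A(e_2)=-ae_1$, $A(e_3)=ae_1$, one finds $\im(A)=\mathrm{span}(e_1,e_2+e_3)$ and $A^2$ vanishes identically there, so $A^2|_{\im A}=0\cdot\mathrm{id}$ is perfectly scalar. Thus $A^3=0$, i.e.\ this parabolic field \emph{is} preharmonic with $\lambda=0$; the Jordan block does not obstruct \eqref{evalue}. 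Moreover, not every parabolic field even has a Jordan block: in $\R^{4,1}$ the skew map rotating $e_1,e_2$ by $\w$ and boosting $e_4,e_5$ by $\w$ has $|A|_L^{\,2}=0$ (hence parabolic) but is semisimple.

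The PDE route also fails as stated. With $|A|_L^{\,2}=0$ one gets $\Delta F=-2(n+1)F$ and $\z=-\lambda+2F$ from \eqref{killspinn}; feeding these into \eqref{eigenharm} and collecting powers of $F$ yields
\[
q(p-n-1)=0,\qquad (n+1)q+pq\lambda=1-n,\qquad 2p\lambda=1-n.
\]
This system is \emph{not} inconsistent in $(p,q)$: for $\lambda\neq0$ it admits $p=n+1$, $\lambda=(1-n)/(2(n+1))$, and a corresponding $q$. The contradiction you expect ``as in Proposition~\ref{propkill3}'' does not materialise, because there $\lambda=\tau^2$ was known explicitly from \eqref{tcubed}, whereas here $\lambda$ is a free unknown. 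The missing ingredient is to pin down $\lambda$: if $\lambda\neq0$ then $A^3=\lambda A$ forces $\V=\im A\oplus\ker A$ with $A^2=\lambda\,\mathrm{id}$ on $\im A$, whence $|A|_L^{\,2}=-\trace(A^2)=-\lambda\dim\im A$; so $|A|_L^{\,2}=0$ with $A\neq0$ forces $\lambda=0$, and then $2p\lambda=1-n$ gives $n=1$. With this extra trace step the PDE route does close; without it, it does not. The paper proceeds quite differently: rather than a parabolic case split, it works from $A^2v=\lambda v$ (with $v=A(w)$), decomposes $v$ along $\im(R)$ and $\ker(R)$, and in each case extracts enough of the spectrum of $R^2$ and the interaction of $T$ with the normal-form basis to deduce $r\w^2-\tau^2>0$ directly.
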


\begin{proof}
Suppose $\s$ is harmonic.
Since $v\in\im(A)$ and $\s$ is preharmonic it follows from \eqref{evalue} that $A^2(v)=\lambda v$.  Now:
$$
RT(v)=R(\tau^2w)=0,
$$
and by \eqref{killtrans} and \eqref{tcubed}:
\begin{align*}
TR(v)
=TA(v)-T^2(v)
&=\<A(v),v\>w-\<A(v),w\>v-\tau^2v=0.
\end{align*}
Therefore:
$$
A^2(v)=R^2(v)+T^2(v)=R^2(v)+\tau^2v.
$$
Since $R$ may be regarded (by restriction) as a skew-symmetric endomorphism of the Euclidean vector space $T_wH^n$, there is an orthogonal decomposition $v=\hat v+\check v$ where $\hat v\in\im(R)$ and $\check v\in\ker(R)$.  Then:
$$
A^2(v)=R^2(\hat v)+\tau^2\hat v+\tau^2\check v.
$$
Therefore, since $R^2(\hat v)\in\im(R)$, $v$ lies in the $\lambda$-eigenspace of $A^2$ if and only if:
\begin{equation}
R^2(\hat v)=(\lambda-\tau^2)\hat v
\quad\text{and}\quad
(\lambda-\tau^2)\check v=0.
\label{psquared}
\end{equation}

\medskip\noindent
{\bf Case 1.}\;
Suppose $\hat v=0$.  Then for all $u\in\im(R)$:
$$
A(u)=R(u)+T(u)=R(u)+\<u,v\>w-\<u,w\>v=R(u).
$$
Since $R$ restricts to an isomorphism of $\im(R)$, this also shows that $\im(R)\subset\im(A)$.  Proposition \ref{propkill3} ensures $R\neq0$, so it follows from \eqref{evalue} that $\lambda$ is a non-zero eigenvalue of $R^2$.  Then $\lambda<0$, since $R$ is Euclidean skew-symmetric, hence $\check v=0$ by \eqref{psquared}.  Therefore $v=0$.  Thus $\s$ is an infinitesimal rotation, with $w$ on the axis.  

\medskip\noindent
{\bf Case 2.}\;
Suppose $\hat v\neq0$.  Since $R^2$ restricts to an isomorphism of $\im(R)$ it follows from \eqref{psquared} that $\lambda-\tau^2\neq0$, hence $\check v=0$.  So $v\in\im(R)$ is a non-null eigenvector of $R^2$.  Therefore $v=v_1e_1+\cdots+ v_{2r}e_{2r}$ with respect to the orthonormal basis of \eqref{normal1}.  We now introduce the index subset: 
$$
I=\{i\in\{1,\dots,r\}:(v_{2i-1})^2+(v_{2i})^2\neq0\}.
$$
It follows that all the $\w_i$ with $i\in I$ are equal; say $\w_i=\rho$.  Thus $R^2(v)=-\rho^2v$, and comparison with \eqref{psquared} yields:
$$
\lambda=\tau^2-\rho^2.
$$
Let $J$ be the complementary index subset.  Thus $j\in J$ if and only if $v_{2j-1}=v_{2j}=0$, and it follows that: 
$$
T(e_{2j-1})=0=T(e_{2j}).
$$ 
Hence:
$$
A(e_{2j-1})=R(e_{2j-1})=\w_j\/e_{2j},
\qquad
A(e_{2j})=R(e_{2j})=-\w_j\/e_{2j-1}.
$$
So $e_{2j-1},e_{2j}\in\im(A)$ and:
$$
A^2(e_{2j-1})=-\w_j^{\,2}\,e_{2j-1},
\qquad
A^2(e_{2j})=-\w_j^{\,2}\,e_{2j}.
$$
Therefore $\w_j^{\,2}=-\lambda=\rho^2-\tau^2$ for all $j\in J$, by \eqref{evalue}.  In particular, $\rho>\tau$.  Since $R^2$ has precisely two non-zero eigenvalues there is an orthogonal splitting:
$$
\im(R)=V\oplus W,
$$
where $V$ (resp. $W$) is the eigenspace for eigenvalue $-\rho^2$ (resp. $\tau^2-\rho^2$); indeed, $V$ (resp. $W$) is spanned by $\{e_{2i-1},e_{2i}:i\in I\}$ (resp. $\{e_{2j-1},e_{2j}:j\in J\}$), and $v\in V$.  If $i\in I$ then: 
$$
T(e_{2i-1})=v_{2i-1}w,
\qquad
T(e_{2i})=v_{2i}w,
$$
hence:
$$
A(e_{2i-1})=\rho\/e_{2i}+v_{2i-1}\/w,
\qquad
A(e_{2i})=-\rho\/e_{2i-1}+v_{2i}\/w.
$$
Therefore:
$$
A(v_{2i}\/e_{2i-1}-v_{2i-1}\/e_{2i})
=\rho(v_{2i-1}\/e_{2i-1}+v_{2i}\/e_{2i}),
$$
so the vector $a_i=v_{2i-1}\/e_{2i-1}+v _{2i}\/e_{2i}\in V$ also belongs to $\im(A)$.  Now:
\begin{align*}
A^2(a_i)
&=A\big(v_{2i-1}(\rho\/e_{2i}+v_{2i-1}\/w)
+v_{2i}(-\rho\/e_{2i-1}+v_{2i}\/w)\big)
=-\rho^2\/a_i+|a_i|^2v.
\end{align*}
Since $a_i$ is an eigenvector of $A^2$ with eigenvalue $\lambda=\tau^2-\rho^2$, we must have $v=a_i$.  We conclude that $V$ is $2$-dimensional.  In particular, this implies:
$$
r\w^2-\tau^2
=\rho^2+(r-1)(\rho^2-\tau^2)-\tau^2
=r(\rho^2-\tau^2)>0.
$$
Therefore $\s$ is again an infinitesimal rotation, but with $w$ no longer on the axis. 
\end{proof}

The spherical and hyperbolic cases can henceforward be treated simultaneously.
For fixed $r$ with $2r<n+1$ we introduce the following {\sl twist equation:}
\begin{equation}
2ck\/\w^4+\e(2nk-c)\w^2+1-n=0,
\label{twist}
\end{equation}
where $c=n+1-2r$ and $k=r-1$.  If $r=1$ then \eqref{twist} has solutions only when $\e=-1$, with unique positive root $\w_0=1$.  If $r>1$ the discriminant $\Delta$ of \eqref{twist}, viewed as quadratic in $\w^2$, is strictly positive, and the signs of the coefficients show that there is precisely one positive root $\w_0$,
which we refer to as the {\sl optimal twist.}  We claim that if $\e=1$ (resp. $\e=-1$) then $\w_0<1$ (resp. $\w_0>1$).  For, from the following upper bound:
$$
\Delta=(2nk-c)^2+8ck(n-1)=(2nk+c)^2-8ck<(2nk+c)^2,
$$
we obtain for $\e=1$:
\begin{equation}
\w_0^{\,2}
<\frac{c-2nk+2nk+c}{4ck}=\frac{2c}{4ck}
=\frac{1}{2(r-1)}.
\label{spherebound}
\end{equation}
So in the spherical case there is in fact the sharper estimate $\w_0<1/\sqrt2$.  On the other hand, since $n>c$ there is the following lower bound:
$$
\Delta>(2ck-c)^2+8c^2k=c^2(2k+1)^2,
$$
from which if $\e=-1$ we obtain:
\begin{equation}
\w_0^{\,2}
>\frac{2nk-c+c(2k+1)}{4ck}
=\frac{n+c}{2c}=1+\frac{n-c}{2c}.
\label{hypbound}
\end{equation}

\par
It was noted (Corollary \ref{corkill1}) that on odd-dimensional spheres the harmonic Killing fields of maximal rank are constant multiples of Hopf fields, which are precisely the spherical Killing fields of constant length.  By contrast, there are no non-trivial hyperbolic Killing fields of constant length. The following result deals with all other cases; ie. Killing fields of non-constant length.  Recall the generalised Hopf field $\Sigma_r$ defined in \eqref{hopf}.

\begin{theorem}\label{thmkill}
Let $\s$ be a Killing field of non-constant length on the non-flat space form $M$.  Then $\s$ is harmonic if and only if $\s$ is congruent to $\w_0\/\Sigma_r$ for some $0<r<(n+1)/2$, with $r>1$ if $M=S^n$.  
Furthermore $\s$ is metrically unique, with metric parameters $p=n+1$ and: 
\begin{equation}
q=
\begin{cases}
(1-n)/2, 
&\text{if } r=1, \\
2(1-r)\displaystyle\frac{\w_0^{\,2}}{\w_0^{\,2}+\e},
&\text{if } r>1.
\tag{\dag}
\label{dag}
\end{cases}
\end{equation}
\end{theorem}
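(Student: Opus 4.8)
The plan is to combine the structural results already established (Propositions \ref{propkill1}--\ref{propkill5}) with the twist equation \eqref{twist}, reducing everything to a computation analogous to that of Theorem \ref{thmconf}. By Proposition \ref{propkill1} a harmonic Killing field $\s$ of non-constant length is preharmonic, hence satisfies \eqref{evalue} for some $\lambda\in\R$, and by Proposition \ref{propkill5} (in the hyperbolic case) $\s$ is an infinitesimal rotation; in all cases Propositions \ref{propkill2} and \ref{propkill4} then force $\s$ to be balanced, say with twist $\w>0$ and rotational rank $r$ with $2r<n+1$ (the excluded maximal-rank spherical case being the Hopf fields of constant length, discounted by hypothesis). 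So up to congruence $\s=\w\Sigma_r$, and it remains to determine for which $\w$ this is harmonic, and with which metric parameters. First I would record, from the normal form \eqref{normal1} (or \eqref{hopf}) applied to $\Sigma_r$, the explicit data $|\s|^2$, the constant $|A|^2=2r\w^2$ (Euclidean case) or via \eqref{lorentzip} in the hyperbolic case, the eigenvalue relation $A^3=-\w^2A$ so that $\lambda=-\w^2$, the spinnaker $\z=\w^2-\e|\s|^2$ from \eqref{killspinn}, and $\Delta F$ from \eqref{killlap}. A short computation with $c=n+1-2r$ gives $|A|^2=2r\w^2-\e c\w^2\cdot(\text{correction})$; more carefully one finds $\Delta F$ is an affine function of $F$, of the shape $\Delta F=\alpha-\beta F$ with $\beta=2(n+1)$ and $\alpha$ expressible in $\w,r,n,\e$.

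Next I would substitute $\z$, $\Delta F$ and the eigenvalue $\nu=\e(n-1)$ (from \eqref{killrough}) into the reduced harmonicity PDE \eqref{eigenharm}. Since $F$ is a non-constant smooth function and all other quantities are constants, \eqref{eigenharm} becomes a polynomial identity in $F$; as $F$ is non-constant (indeed takes a continuum of values, $\s$ having non-constant length) each coefficient must vanish. This yields three equations in $(p,q,\w)$, the top one being $(n+1-p)q=0$ exactly as in \eqref{confquad} and in Propositions \ref{propkill3}, \ref{propkill4}. Since $q=0$ would force, via the lower-order equations, a contradiction (the argument is the same flavour as in Proposition \ref{propkill3}: one of the remaining equations becomes $1-n=0$ or an incompatible sign condition — here I would check that $q=0$ makes \eqref{eigenharm} read $p\,\Delta F+\nu(1+2(1-p)F)=0$, whose constant and linear coefficients cannot both vanish when $n>1$), we get $p=n+1$. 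The two surviving equations then, after eliminating, should collapse precisely to the twist equation \eqref{twist} for $\w$ together with a formula for $q$ in terms of $\w$. In the case $r=1$ one has $k=r-1=0$, \eqref{twist} degenerates to $\e(2n-c)\w^2+1-n=0$ which (as noted after \eqref{twist}) has a positive solution only for $\e=-1$, giving $\w_0=1$, and the $q$-equation should reduce to $q=(1-n)/2$; for $r>1$ one reads off the optimal twist $\w_0$ and $q=2(1-r)\w_0^{\,2}/(\w_0^{\,2}+\e)$, matching \eqref{dag}. Metric uniqueness is then immediate: $p=n+1$ is forced, and $q$ is determined by $\w_0$, while $\w_0$ itself is the unique positive root of \eqref{twist}.

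Conversely, I must verify that $\w_0\Sigma_r$ \emph{is} harmonic with these parameters — but this is automatic, since the derivation of \eqref{eigenharm} from \eqref{harmeqn} is reversible for preharmonic eigenfunctions, and $\Sigma_r$ is such (by Propositions \ref{propkill2}/\ref{propkill4} and \eqref{killrough}); so satisfying the three coefficient equations is equivalent to harmonicity. One should also confirm $q$ is well-defined, i.e. $\w_0^{\,2}+\e\neq0$: for $\e=1$ this is trivial, and for $\e=-1$ the bound \eqref{hypbound} gives $\w_0^{\,2}>1$, so $\w_0^{\,2}-1>0$; this simultaneously shows $q<0$, consistent with the remarks following Theorem \ref{thmconf}.

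\textbf{Main obstacle.} The genuine work is the bookkeeping in the hyperbolic case: computing $|A|_L^{\,2}$ and $\Delta F$ for $\Sigma_r$ when $w$ is \emph{not} on the axis (recall Proposition \ref{propkill5}, Case 2, shows a harmonic rotation need not have $w$ on its axis), and checking that the resulting affine dependence $\Delta F=\alpha-\beta F$ and the spinnaker are nonetheless given by the clean formulas above — this requires care with \eqref{lorentzip} and with which of $R$, $T$ contribute. Equally, one must check that the three coefficient equations really do algebraically reorganise into exactly \eqref{twist} plus the stated $q$; that is a determinant/elimination step where a sign error in $c=n+1-2r$ or $k=r-1$ would be fatal. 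I expect the cleanest route is to treat $\Sigma_r$ on the great/totally-geodesic $(2r-1)$-sphere (resp. the analogous hyperbolic locus) orthogonal to its axis, where it restricts to a genuine Hopf field and the constants are transparent, and then note that $F$, $\Delta F$, $\z$ extend by the ambient formulas \eqref{killlap}, \eqref{killspinn} to all of $M$.
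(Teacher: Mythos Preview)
Your approach is essentially the paper's: reduce via Propositions \ref{propkill1}--\ref{propkill5} to a balanced field $\w\Sigma_r$, plug the spinnaker \eqref{killspinn} and $\Delta F$ from \eqref{killlap} into \eqref{eigenharm}, and equate coefficients of the resulting polynomial in $F$.

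Two small clean-ups. First, your ``main obstacle'' is illusory. The formulas \eqref{killlap} and \eqref{killspinn} are already global: $|A|^2$ (resp.\ $|A|_L^{\;2}$) is a \emph{constant}, and evaluating it at any single point on the axis immediately gives $2r\w^2$ in both signatures. There is no need to restrict to a totally geodesic subsphere or worry about off-axis behaviour; with $\lambda=-\w^2$ you get directly $\z=\w^2-2\e F$ and $\Delta F=2(\e(n+1)F-r\w^2)$. Second, your argument for $q\neq0$ is more work than necessary (and you dropped the $2p\z$ term from \eqref{eigenharm} when setting $q=0$, though the conclusion survives). In the paper's three coefficient equations the linear one reads $\bigl(\e(n+1)+(p-2r)\w^2\bigr)q=\e(1-n)$, so $q\neq0$ is immediate. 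Adding the linear and constant equations then gives $2(r-1)\w^2+q(\w^2+\e)=0$, from which both the $r=1$ dichotomy and the formula \eqref{dag} fall out; substituting back yields exactly \eqref{twist}.
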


\begin{proof}
If $\s$ is harmonic then it follows from Propositions \ref{propkill1}, \ref{propkill2}, \ref{propkill4} and \ref{propkill5} that in the spherical case $\s$ is balanced, and in the hyperbolic case $\s$ is a balanced infinitesimal rotation.  Then equation \eqref{evalue} holds with $\lambda=-\w^2$, where $\w$ is the twist.  By \eqref{killspinn} the spinnaker of $\s$ is:
$$
\z=\w^2-2\e F.
$$
Furthermore by \eqref{killlap}:
$$
\Delta F=2(\e(n+1)F-r\w^2).
$$
Harmonicity equation \eqref{eigenharm} is therefore polynomial in the non-constant function $F$, and the coefficients yield:
\begin{align}
(n+1-p)q&=0,
\label{quadcoeff} \\
(\e(n+1)+(p-2r)\w^2)q&=\e(1-n),
\label{lincoeff} \\
2((r-1)p+rq)\w^2&=\e(n-1).
\label{constcoeff}
\end{align}
It follows from \eqref{lincoeff} that $q\neq0$, whereupon $p=n+1$ by \eqref{quadcoeff}.  Adding \eqref{lincoeff} and \eqref{constcoeff} then yields:
\begin{equation}
2(r-1)\w^2+q(\w^2+\e)=0.
\label{killfinal}
\end{equation}
If $r=1$ then $\e=-1$ and $\w=1$, and it follows from \eqref{constcoeff} that $q=(1-n)/2$.  If $r>1$ then \eqref{killfinal} rearranges to \eqref{dag} for $q$, substitution of which into \eqref{lincoeff} yields the twist equation \eqref{twist} for $\w$.
\end{proof}

It follows from \eqref{dag} that all harmonic Killing fields $\s$ of non-constant length have metric parameter $q<0$.  (Recall that in the hyperbolic case $\w_0>1$ when $r>1$.)  In the spherical case, plugging the upper bound \eqref{spherebound} for $\w_0$ into \eqref{dag} yields the following lower bound for $q$:
\begin{equation}
q=-\frac{2k}{1+1/\w_0^{\,2}}
>-\frac{2k}{1+2k} 
=-\frac{2r-2}{2r-1}.
\label{sphereq}
\end{equation}
Thus $-1<q<0$, in contrast to harmonic conformal gradient fields, where $q\leqs-1$.  Since $\w_0^{\,2}<1$ it follows that $q\w_0^{\,2}>-1$, which since $\w_0=\|\s\|_\infty$ shows that $\s$ is $q$-Riemannian \eqref{qriem}.  In fact, when \eqref{sphereq} is combined with \eqref{spherebound} the following sharper estimate is obtained:
$$
q\/\w_0^{\,2}
>\frac{q}{2r-2}
>-\frac{1}{2r-1}\geqs-\frac13,
\quad\text{since $r\geqs2$.}
$$  
So $\s(M)$ sits comfortably inside the $q$-Riemannian ball bundle.  On the other hand, bearing in mind that $n>3$ if $\s$ has non-constant length: 
$$
1-\frac{p}{2}=1-\frac{n+1}{2}<-1<q.
$$  
It therefore follows from Proposition \ref{propharm} that $\w_0^{\,2}>1/n$.  We conclude that:
\begin{equation}
\frac{1}{n}<\w_0^{\,2}<\frac{1}{2r-2}.
\label{spheretwist}
\end{equation}
These bounds are tightest when $n$ is even and $\s$ has maximal rotational rank $r=n/2$.  For example, applying \eqref{sphereq} and \eqref{spheretwist} to the unique (up to congruence) non-trivial harmonic Killing field on $S^4$ yields:
$$
-1/3<q<0,
\qquad
1/4<\w_0^{\,2}<1/2,
$$
in comparison to the precise values:  
$$
q=(\sqrt{73}-13)/8,
\qquad
\w_0^{\,2}=(\sqrt{73}-7)/4.
$$

In the hyperbolic case, if $n\geqs4$ and $\s$ has rotational rank $r>1$ then since $\w_0>1$ it follows from \eqref{dag} that $q<2(1-r)$.  In particular, $q<-2$.  From \eqref{hypbound}:
$$
\frac{\w_0^{\,2}}{\w_0^{\,2}-1}
=\frac{1}{1-1/\w_0^{\,2}}<\frac{n+c}{n-c},
$$
which by \eqref{dag} yields a lower bound on $q$:
\begin{equation}
2\d(1-r)<q<2(1-r),
\quad\text{where $\d=\frac{n+c}{n-c}>1$.}
\label{hypq}
\end{equation}
These bounds are again tightest when $n$ is even and $r=n/2$.  For example, applying \eqref{hypq} and \eqref{hypbound} to the unique (up to congruence) harmonic Killing field on $H^4$ with $r=2$ yields:
$$
-10/3<q<-2,
\qquad
\w_0^{\,2}>5/2,
$$
in comparison to the precise values:
$$
q=-(\sqrt{73}+13)/8,
\qquad
\w_0^{\,2}=(\sqrt{73}+7)/4.
$$

\section{Loxodromic Fields}\label{seclox}

Suppose $a,b\in\V$ and let $\a,\b$ be the restrictions to $M$ of the corresponding metrically dual linear functionals.  First define for all $x\in M$:
\begin{equation}
K(x)=\a(x)b-\b(x)a.
\label{elkilltrans}
\end{equation}
Then $K$ is the restriction to $M$ of a skew-symmetric transformation of $\V$, hence a Killing field.  If $A,B$ are the conformal gradient fields with poles $a,b$ then by \eqref{confield}:
\begin{align}
K(x)
&=\a\/B-\b\/A.
\label{elkill}
\end{align}
We refer to $K$ as the {\sl elementary Killing field\/} determined by the pair $(a,b)$.  From \eqref{confcov}, the covariant derivative of $K$ is particularly simple:
\begin{align}
\nab XK
&=\<A,X\>B-\<B,X\>A.
\label{elkillcov}
\end{align}
It is also useful to note from \eqref{confield} the following generalisation of \eqref{conflength}:
\begin{equation}
\<A,B\>=\<a,b\>-\e\a\b.
\label{confprod}
\end{equation}
\par
Now let $\RR=\{a_1,b_1,\dots,a_r,b_r\}$ be a spacelike orthonormal subset of $\V$.  Let $K_i$ be the elementary Killing field determined by $(a_i,b_i)$, and define:
$$
R=\textstyle\sum_i\w_iK_i,
\quad\w_i>0.
$$ 
From Section \ref{seckill}, every Killing field on $S^n$, or infinitesimal rotation on $H^n$, of rotational rank $r$ has such an expansion.  A vector field $\s$ on $M$ is then said to be {\sl loxodromic\/} if:
$$
\s=R+C,
$$
where $C$ is a non-trivial conformal gradient field whose pole $c$ is orthogonal to $\RR$.  We say that $\s$ is {\sl properly loxodromic\/} if $R$ is balanced and $n=2r$; in the hyperbolic case this implies that $c$ is timelike.  Then $R$ and $C$ have precisely the same zeros, and the angle between $\s(x)$ and $C(x)$ is constant.  
If $n=2$ then every loxodromic field is properly loxodromic.

\par
Let $\g$ be the restriction to $M$ of the linear functional metrically dual to $c$.  It follows from \eqref{elkill} and \eqref{confprod} that:
\begin{equation*}
\<K_i,C\>
=\a_i\<B_i,C\>-\b_i\<A_i,C\>
=-\e\a_i\b_i\g+\e\b_i\a_i\g=0.
\end{equation*}
Furthermore since $\RR$ is spacelike:
\begin{align}
\<K_i,K_j\>
&=\d_{ij}(\a_i^{\,2}+\b_i^{\,2}),
\label{elkillprod}
\end{align}
where $\d_{ij}$ is the Kronecker symbol.  So if $\s$ is loxodromic then (summing over $i$):
\begin{align}
|\s|^2
&=\w_i^{\,2}|K_i|^2+|C|^2
=\w_i^{\,2}(\a_i^{\,2}+\b_i^{\,2})+\mu-\e\g^2,
\label{loxlength}
\end{align}
where $\mu=\<c,c\>$.
Note that $\s$ vanishes precisely on the axis of $R$; ie. the totally geodesic submanifold $M\cap\uperp{\RR}\subset M$ of codimension $2r$.

\begin{lemma}\label{lemlox}
Let $\s$ be a loxodromic vector field on a non-flat space form.  If $\s$ is $p$-preharmonic for some $p$ then $\s$ is properly loxodromic.
\end{lemma}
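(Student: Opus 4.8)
The plan is to convert the $p$-preharmonicity condition into algebraic relations by first writing $\nabla^*\nabla\s$ and $\nab{\nabla F}\s$ explicitly in terms of the constituents $K_1,\dots,K_r$ and $C$ of the loxodromic field $\s=\sum_i\w_iK_i+C$, and then pairing the vector identity $T_p(\s)=\phi\/\s$ — which is the meaning of $p$-preharmonicity of $\s$, with $\phi$ a smooth function and $T_p$ as in \eqref{harmterms} — successively against $K_i$, against $C$, and against well-chosen fixed vectors of $\V$.

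First I would record $\nabla^*\nabla\s=\e(n-1)R+\e C$, immediate from \eqref{killrough} applied to the Killing field $R=\sum_i\w_iK_i$ together with \eqref{confrough}. Next, from \eqref{loxlength} the gradient of $F=\tfrac12|\s|^2$ is $\nabla F=\sum_i\w_i^{\,2}\rho_i-\e\/\g\/C$, where I abbreviate $\rho_i=\a_iA_i+\b_iB_i$; and \eqref{confprod}, together with orthonormality of $\RR$ and $c\perp\RR$, gives $\<A_i,\nabla F\>=\a_i\chi_i$ and $\<B_i,\nabla F\>=\b_i\chi_i$ with $\chi_i=\w_i^{\,2}+\e(\mu-|\s|^2)$. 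Then \eqref{elkillcov} yields $\nab{\nabla F}R=\sum_i\w_i\chi_iK_i$, while \eqref{confcov} gives $\nab{\nabla F}C=-\e\/\g\/\nabla F$; substituting into $T_p(\s)=(1+|\s|^2)\nabla^*\nabla\s+2p\/\nab{\nabla F}\s$ then exhibits $T_p(\s)$ as an explicit combination of the $K_i$, of $C$, and of the $\rho_i$.

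The core is a pointwise argument. By \eqref{elkillprod} the vectors $K_i(x)$ are mutually orthogonal with $|K_i(x)|^2=\a_i^{\,2}+\b_i^{\,2}$; they are orthogonal to $C(x)$ (noted in Section~\ref{seclox}); and a short computation with \eqref{confprod} shows they are also orthogonal to every $\rho_j(x)$. Pairing $T_p(\s)=\phi\/\s$ with $K_i(x)$, at a point where $\a_i^{\,2}+\b_i^{\,2}\neq0$, therefore isolates the $K_i$-coefficients; cancelling $\w_i|K_i|^2$ gives $(1+|\s|^2)\e(n-1)+2p\chi_i=\phi$ for each $i$, first on a dense open set and then everywhere by continuity, and differencing over $i$ yields $2p(\w_i^{\,2}-\w_j^{\,2})=0$. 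If $p=0$ one argues directly instead: collinearity of $\nabla^*\nabla\s=\e(n-1)R+\e C$ with $\s=R+C$, given $R\perp C$ with both generically nonzero, already forces $n=2$, when every loxodromic field is properly loxodromic. If $p\neq0$ we get $\w_1=\cdots=\w_r$, i.e.\ $R$ is balanced; feeding this back, $T_p(\s)=\phi\/\s$ collapses (its $R$-component becoming redundant) to
\[
\bigl[(1+|\s|^2)\e+2p\g^2-\phi\bigr]\/C=2\e\/p\/\g\/\w^2\textstyle\sum_i\rho_i .
\]
Supposing, for contradiction, that $n>2r$, there is a nonzero $f\in\V$ orthogonal to $c$ and to every plane $a_i\wedge b_i$; pairing the displayed identity with $\rho_j$ and with $f$ — all the needed inner products being immediate from \eqref{confprod} and $c\perp\RR$ — yields two scalar identities whose difference reduces to $2p\w^2=0$, which is impossible. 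Hence $n=2r$, and with $R$ balanced this is precisely the assertion that $\s$ is properly loxodromic.

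I expect the main obstacle to be the bookkeeping around genericity rather than any single computation: one must check that every division performed ($\w_i$, $|K_i(x)|^2$, $\g$, $\a_j^{\,2}+\b_j^{\,2}$, and in the hyperbolic case the quantities involving $\mu$) is valid on a dense open subset of $M$, each being a nonzero real-analytic function there, so that generically-valid identities propagate by continuity, and that the auxiliary vector $f$ exists precisely when $n>2r$. One case worth isolating is a lightlike pole $c$ on $H^n$, where $\mu=0$ but $\g$ is nowhere zero: there the scalar relation forced on $\phi$ is inconsistent, so no such field is $p$-preharmonic, in accordance with its failing to be properly loxodromic.
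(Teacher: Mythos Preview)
Your approach is correct and shares the paper's architecture: compute $\nabla^*\nabla\s$ and $\nab{\nabla F}\s$ in terms of the $K_i$, $C$, and the gradients $\rho_i$ (the paper's $G_i$); use the orthogonality of each $K_i$ against $C$ and every $\rho_j$ to isolate the $K_i$-coefficients of $T_p(\s)=\phi\/\s$ and force $R$ to be balanced (handling $p=0$ separately); and finally show that the residual relation, which amounts to the pointwise collinearity of $C$ and $G=\sum_i\rho_i$, forces $n=2r$.

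The only genuine divergence is in that last step.  The paper computes $|C\wedge G|^2$ directly, obtaining
\[
|C\wedge G|^2=\textstyle\sum_i|K_i|^2\bigl(|C|^2-\e\mu\sum_j|K_j|^2\bigr),
\]
and then uses the quadratic relation $\sum_j(\a_j^{\,2}+\b_j^{\,2})+\d_1^{\,2}+\cdots+\d_{n-2r}^{\,2}+\g^2/\mu=\e$ to see that vanishing is possible only when $n=2r$, with $\mu<0$ in the hyperbolic case dropping out along the way.  Your pairing with $\rho_j$ and with a vector $f\in\uperp{\RR}\cap\uperp c$ extracts two expressions for the scalar $\Psi=(1+|\s|^2)\e+2p\g^2-\phi$; their difference indeed reduces to $2p\w^2$, and this gives the contradiction without invoking the quadratic relation.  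The paper's wedge computation is more self-contained and makes the hyperbolic sign condition on $\mu$ transparent; your argument is more elementary but needs the existence of $f$.  On that point, your worry about the lightlike pole is unnecessary: when $\mu=0$ and $n>2r$ the subspace $\uperp c\cap\uperp{\RR}$ is still nontrivial (it contains $c$ itself, since $c\in\uperp c$), and your pairing argument goes through verbatim with such an $f$, yielding the same contradiction $2p\w^2=0$ without a separate inconsistency analysis.
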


\begin{proof}
From \eqref{killrough} and \eqref{confrough}:
\begin{equation}
\nabla^*\nabla\s
=\e(n-1)R+\e C.
\label{loxrough}
\end{equation}
Therefore $\s$ is $0$-preharmonic if and only if $n=2$, in which case $\s$ is properly loxodromic.  Now suppose $p\neq0$.  Introducing the gradient vector fields:
$$
G_i=\a_iA_i+\b_i B_i,
$$
it follows from \eqref{loxlength} that (summing over $i$):
\begin{equation}
\nabla F
=\w_i^{\,2}G_i-\e\g C.
\label{loxgrad}
\end{equation}
From \eqref{confprod}:
\begin{align*}
\<G_i,K_j\>
&=\d_{ij}(\a_j\b_i-\a_i\b_j)=0.
\end{align*}
Therefore $\nabla F$ is orthogonal to $R$.  
From \eqref{elkillcov} and \eqref{confcov} (summing over $i$):
$$
\nab{\nabla F}\s
=\w_i\<A_i,\nabla F\>B_i-\w_i\<B_i,\nabla F\>A_i-\e\g\nabla F. 
$$
Then by \eqref{loxgrad} and \eqref{confprod} (summing over all repeated indices):
\begin{align}
\nab{\nabla F}\s+\e\g\nabla F
&=\w_i\/\w_j^{\,2}\bigl(\a_j(\d_{ij}-\e\a_i\a_j)-\e\a_i\b_j^{\,2}\bigr)B_i
+\w_i\a_i\g^2 B_i \notag \\
&\qquad
-\w_i\/\w_j^{\,2}\bigl(\b_j(\d_{ij}-\e\b_i\b_j)-\e\b_i\a_j^{\,2}\bigr)A_i
-\w_i\b_i\g^2 A_i \notag \\
&=\e(\mu+\e\w_i^{\,2}-|\s|^2)\w_iK_i,
\quad\text{by \eqref{loxlength}.}
\label{loxderiv}
\end{align}
Since $\nabla F$ is orthogonal to $R$, it follows from \eqref{loxrough}, \eqref{loxderiv} and \eqref{harmterms} that
if $\s$ is $p$-preharmonic then $R$ is balanced and $\nabla F$ is pointwise collinear with $C$.  The latter is equivalent to the pointwise collinearity of $G=\sum_iG_i$ and $C$.  Using \eqref{confprod} we first note that for all $i,j$:
\begin{align*}
\<G_i,G_j\>
&=(\a_i^{\,2}+\b_i^{\,2})(\d_{ij}-\e(\a_j^{\,2}+\b_j^{\,2})),
\end{align*}
and then calculate (summing over repeated indices):
\begin{align*}
|C\wedge G|^2
&=|C|^2|G|^2-\<C,G\>^2 \notag \\
&=(\a_i^{\,2}+\b_i^{\,2})\bigl(\mu-\e\g^2-\e\mu(\a_j^{\,2}+\b_j^{\,2})\bigr)  \\
&=|K_i|^2(|C|^2-\e\mu |K_j|^2). \notag
\end{align*}
Since $K_i$ vanishes only on a set of measure zero, if $\e=-1$ then $C\wedge G$ vanishes only if $\mu<0$; ie. $c$ is timelike.  So there exist vectors $d_1,\dots,d_{n-2r}\in\V$, which in the hyperbolic case are spacelike, such that $\{a_1,b_1,\dots,a_r,b_r,d_1,\dots,d_{n-2r},c/\sqrt{\e\mu}\}$ is an orthonormal basis of $\V$.  Hence, if $\d_i\colon M\to\R$ is the restriction of the covector metrically dual to $d_i$ then there is the quadratic relation:
\begin{equation}
\textstyle\sum_{j=1}^r(\a_j^{\,2}+\b_j^{\,2})
+\d_1^{\,2}+\cdots+\d_{n-2r}^{\,2}+\g^2/\mu=\e,
\label{loxquad}
\end{equation}
which rearranges to:
$$
|C|^2-\e\mu\textstyle\sum_j|K_j|^2
=\e\mu(\d_1^{\,2}+\cdots+\d_{n-2r}^{\,2}).
$$
Therefore if $C\wedge G=0$ then $n=2r$.
\end{proof}

\begin{prop}\label{proplox}
Let $\s$ be a loxodromic vector field on the non-flat space form $M^n$.  Then $\s$ is $p$-preharmonic for some $p$ if and only if $n=2$.  In this case $\s$ is an eigenfunction of the rough Laplacian, and $\s$ is preharmonic with spinnaker:
\begin{equation*}
\z=\e(\mu+\e\w^2-|\s|^2).
\end{equation*}
\end{prop}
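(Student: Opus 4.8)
The plan is to wring everything out of Lemma~\ref{lemlox} together with the two identities \eqref{loxrough} and \eqref{loxderiv} established inside its proof. Suppose first that $\s$ is $p$-preharmonic for some $p$. By Lemma~\ref{lemlox}, $\s$ is properly loxodromic, so $R$ is balanced with (common) twist $\w$ and rotational rank $r$, and $n=2r$ (with $c$ timelike when $M=H^n$). Because $n=2r$ there are no auxiliary vectors $d_i$, so \eqref{loxquad} collapses to $\sum_j|K_j|^2=\e|C|^2/\mu$, and the Lagrange-identity computation of $|C\wedge G|^2$ in the proof of Lemma~\ref{lemlox}, read in the case $n=2r$, then forces $C\wedge G\equiv0$. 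Evaluating $\<C,G\>$ by means of \eqref{confprod} (each $\<C,A_i\>=-\e\g\a_i$ and $\<C,B_i\>=-\e\g\b_i$, since $c\perp\RR$) pins down the constant of proportionality: $G=-(\g/\mu)\/C$, whence by \eqref{loxgrad} (with every $\w_i=\w$), $\nabla F=-(\g/\mu)(\w^2+\e\mu)\/C$.

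Substituting this collinear expression for $\nabla F$ into \eqref{loxderiv} (again with $\w_i=\w$) gives $\nab{\nabla F}\s=\e(\mu+\e\w^2-|\s|^2)\/R-\e\g\nabla F$. The one genuine piece of algebra is to check that $-\e\g\nabla F=\e(\mu+\e\w^2-|\s|^2)\/C$; this is a short identity in $\g$, $\mu$, $\w$, obtained by inserting $|\s|^2=(|C|^2/\mu)(\e\w^2+\mu)$ (which comes from \eqref{loxlength} and $\sum_j|K_j|^2=\e|C|^2/\mu$) and using $\mu-|C|^2=\e\g^2$. It then follows that $\nab{\nabla F}\s=\e(\mu+\e\w^2-|\s|^2)\/\s$, i.e. that \eqref{preharm} holds with $\z=\e(\mu+\e\w^2-|\s|^2)$, for any properly loxodromic field.

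Now bring in \eqref{loxrough}, $\nabla^*\nabla\s=\e(n-1)\/R+\e\/C$. Combined with the previous paragraph this gives
\begin{equation*}
T_p(\s)=\bigl((1+|\s|^2)\e(n-1)+2p\z\bigr)R+\bigl((1+|\s|^2)\e+2p\z\bigr)C.
\end{equation*}
Since $\<R,C\>=0$ and $R,C$ are both non-trivial, they are linearly independent on a dense open subset of $M$; there, $T_p(\s)$ is collinear with $\s=R+C$ only when the two bracketed coefficients coincide, i.e. $(1+|\s|^2)\e(n-2)=0$, which forces $n=2$. Conversely, if $n=2$ then every loxodromic field is automatically properly loxodromic, so the computation of the first two paragraphs applies; moreover $n-1=1$ turns \eqref{loxrough} into $\nabla^*\nabla\s=\e\/\s$, so $\s$ is an eigenfunction of the rough Laplacian and $T_p(\s)=\bigl((1+|\s|^2)\e+2p\z\bigr)\s$ is collinear with $\s$ for every $p$. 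Hence $\s$ is $p$-preharmonic for some $p$ exactly when $n=2$, and in that case $\s$ is preharmonic with spinnaker $\z=\e(\mu+\e\w^2-|\s|^2)$.

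The step I expect to be most delicate is the passage from ``$\nab{\nabla F}\s\in\mathrm{span}\{R,C\}$'' to ``$\nab{\nabla F}\s$ is a scalar multiple of $\s$'': it rests on pinning down $G=-(\g/\mu)\/C$ exactly and then on the scalar identity $-\e\g\nabla F=\e(\mu+\e\w^2-|\s|^2)\/C$. Everything afterwards --- the $R$-versus-$C$ decomposition that isolates $n=2$, and the eigenfunction observation in the converse --- is immediate from \eqref{loxrough}.
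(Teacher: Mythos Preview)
Your argument is correct and follows essentially the same route as the paper: reduce to the properly loxodromic case via Lemma~\ref{lemlox}, establish $\nab{\nabla F}\s=\e(\mu+\e\w^2-|\s|^2)\s$, and then use \eqref{loxrough} to see that $T_p(\s)$ is collinear with $\s$ precisely when $n=2$. The only cosmetic difference is that the paper obtains $\mu G+\g C=0$ by directly differentiating the quadratic relation \eqref{loxquad} (with $n=2r$), whereas you recover it from $C\wedge G=0$ together with the computation of $\<C,G\>$; these are equivalent.
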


\begin{proof}
Suppose that $\s$ is properly loxodromic.  Since $n=2r$, relation \eqref{loxquad} reduces to (summing over $i$):
\begin{equation}
\mu(\a_i^{\,2}+\b_i^{\,2})+\g^2=\e\mu,
\label{loxquadtwor}
\end{equation}
which when differentiated yields:
\begin{equation}
\mu G+\g C=0.
\label{loxquaddiff}
\end{equation}
Furthermore since $R$ is balanced \eqref{loxderiv} simplifies:
\begin{equation*}
\nab{\nabla F}\s=\e(\mu+\e\w^2-|\s|^2)R-\e\g\nabla F.
\end{equation*}
Application of \eqref{loxquadtwor} to \eqref{loxlength} yields:
\begin{equation}
\mu|\s|^2=(\mu+\e\w^2)(\mu-\e\g^2).
\label{loxproplength}
\end{equation}
Also application of \eqref{loxquaddiff} to \eqref{loxgrad} yields:
\begin{equation}
\mu\nabla F
=-\e(\mu+\e\w^2)\g C,
\label{loxpropgrad}
\end{equation}
and then by \eqref{loxproplength}:
$$
\mu\g\nabla F
=-\e(\mu+\e\w^2)\g^2C
=-\mu(\mu+\e\w^2-|\s|^2)C.
$$
This yields the further simplification:
\begin{equation*}
\nab{\nabla F}\s
=\e(\mu+\e\w^2-|\s|^2)\/\s.
\end{equation*}
It then follows from \eqref{loxrough} that $\s$ is $p$-preharmonic if and only if $n=2$.  The result is now a consequence of Lemma \ref{lemlox}. 
\end{proof}

Define vector fields $\s_0$ and $\s_1$ on $H^2$ as follows:
\begin{gather*}
\s_0(x_1,x_2,x_3)=(-x_2,x_1,0), \\
\s_1(x_1,x_2,x_3)=(-x_1x_3,-x_2x_3,1-x_3^{\,2}).
\end{gather*}
Then $\s_0$ (resp. $\s_1$) is an infinitesimal rotation (resp. conformal gradient) with axis (resp. pole) $(0,0,1)$, and any loxodromic field on $H^2$ is congruent to $\w\s_0+\sqrt{-\mu}\/\s_1$.  By Theorems \ref{thmconf} and \ref{thmkill}, up to congruence $\s_0$ (resp. $\s_1$) is the unique harmonic Killing (resp. conformal gradient) field on $H^2$, and both fields are metrically unique, with metric parameters $(p,q)=(3,-1/2)$.  Furthermore, when $H^2$ is regarded as a Riemann surface $\s_0$ and $\s_1$ are conjugate.  The associate family:
$$
\LL=\{\sin t\,\s_0+\cos t\,\s_1:t\in\R\}
$$
is therefore also harmonic, and loxodromic (with the exception of $\s_0$ and $\s_1$, which our definition excludes).

\begin{theorem}\label{thmlox}
Let $\s$ be a loxodromic vector field on the non-flat space form $M$.  Then $\s$ is harmonic if and only if $M=H^2$ and $\s$ is congruent to an element of $\LL$.  Furthermore $\s$ is metrically unique, with metric parameters $(p,q)=(3,-1/2)$. 
\end{theorem}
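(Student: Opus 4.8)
The plan is to play Proposition~\ref{proplox} against the reduced harmonicity PDE \eqref{eigenharm}. By Proposition~\ref{proplox}, a loxodromic field that is $p$-preharmonic for some $p$ must live on a surface, so if $\s$ is harmonic then $n=2$, whence $\s$ is automatically properly loxodromic: $\s=R+C$ with $R=\w K_1$ balanced of rank one and $C$ a conformal gradient field whose pole $c$ is timelike (so $\mu=\<c,c\><0$). Proposition~\ref{proplox} moreover identifies $\s$ as a preharmonic eigenfunction of the rough Laplacian, with eigenvalue $\nu=\e$ (read off from \eqref{loxrough}) and spinnaker $\z=\e(m-2F)$, where I abbreviate $m=\mu+\e\w^2$. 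Note $m\neq0$, for otherwise $\s\equiv0$ by \eqref{loxproplength}; and $F=\tfrac12|\s|^2$ is genuinely non-constant, again by \eqref{loxproplength}, since $\g^2$ varies over $H^2$. In particular the image of $F$ is a non-degenerate interval.

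The one genuine computation is $\Delta F$. From \eqref{loxpropgrad}, $\mu\,\nabla F=-\e m\,\g\,C$; applying the divergence identity \eqref{divident} together with $\Div C=-2\e\g$ (from \eqref{confdiv} with $n=2$) and $d\g(C)=\<C,C\>=|C|^2$ (from \eqref{confdual}) gives $\Div(\g C)=\mu-3\e\g^2$, hence $\Delta F=-\Div\nabla F=(\e m/\mu)(\mu-3\e\g^2)$. Eliminating $\g^2$ by means of \eqref{loxproplength} then collapses this to the clean expression $\Delta F=2\e(3F-m)$.

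Substituting $\nu=\e$, $\z=\e(m-2F)$ and $\Delta F=2\e(3F-m)$ into \eqref{eigenharm} produces a polynomial identity in $F$; since $F$ ranges over an interval, equating coefficients of $F^2$, $F$ and $1$ yields
\begin{align*}
q(p-3)&=0, \\
q\bigl(3+(p-2)m\bigr)&=-1, \\
2mq&=1.
\end{align*}
The third equation forces $q\neq0$, hence $p=3$ by the first; the second then reads $q(3+m)=-1$, which together with $2mq=1$ gives $q=-1/2$ and $m=-1$. The remaining case $p=0$ is handled directly from \eqref{harmeqn} and \eqref{harmterms}: since $\nabla^*\nabla\s=\e\s$ one finds $\tau_{0,q}(\s)=(1+|\s|^2)(\e+q\,\Delta F)\,\s$, which (as $\s$ vanishes only on a null set) would force $q\,\Delta F$ to equal the constant $-\e$, impossible because $\Delta F$ is non-constant and $q=0$ already fails. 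Hence the metric parameters, if they exist, are uniquely $(3,-1/2)$.

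It remains to extract the geometry from $m=\mu+\e\w^2=-1$. On $S^2$ ($\e=1$) this says $\mu+\w^2=-1$, impossible as $\mu\geqs0$ and $\w>0$, so there are no harmonic loxodromic fields on the $2$-sphere. On $H^2$ ($\e=-1$) it says $\mu=\w^2-1$, so $0<\w<1$ (consistent with $c$ timelike), and $\s$ is congruent to $\w\s_0+\sqrt{-\mu}\,\s_1=\sin t\,\s_0+\cos t\,\s_1$ with $t=\arcsin\w\in(0,\pi/2)$, an element of $\LL$. For the converse one simply invokes the discussion preceding the theorem: $\s_0$ and $\s_1$ are $(3,-1/2)$-harmonic by Theorems~\ref{thmkill} and \ref{thmconf}, they are conjugate on the Riemann surface $H^2$, and $\tau_{3,-1/2}$ is equivariant under the associate-family circle action, so every element of $\LL$ --- in particular every loxodromic one --- is $(3,-1/2)$-harmonic. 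The main obstacle I anticipate is the $\Delta F$ computation and the subsequent bookkeeping: one must honestly re-express everything in terms of $F$ alone before comparing coefficients, and not overlook the $p=0$ case when asserting metric uniqueness.
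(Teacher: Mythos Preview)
Your proof is correct and reaches the same polynomial identity in $F$ that the paper does, but you get there by a slightly different route. The paper forward-references Proposition~\ref{prop2conf}, which uses the K\"ahler structure of $M^2$ to package the harmonicity equation for \emph{any} $2$-dimensional conformal field into the compact form $\e(1+2F)(1+2qF)+2q((p-2)F-1)\z=0$; substituting the spinnaker from Proposition~\ref{proplox} then immediately gives the polynomial whose coefficients you extract. You instead compute $\Delta F$ directly from the loxodromic data via \eqref{loxpropgrad} and \eqref{loxproplength}, obtaining $\Delta F=2\e(3F-m)$, and feed this into the general reduced equation \eqref{eigenharm}. The arithmetic is identical, and your coefficient system $q(p-3)=0$, $q(3+(p-2)m)=-1$, $2mq=1$ is exactly what the paper's equation produces when expanded. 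Your approach buys self-containment within Section~\ref{seclox} (no forward reference, no appeal to the holomorphic structure); the paper's buys economy by recycling the $2$-dimensional conformal machinery that it needs anyway in Section~\ref{sec2conf}.

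Two minor remarks. First, your early parenthetical ``whose pole $c$ is timelike (so $\mu<0$)'' is stated before you have excluded $S^2$, where it is false; better to defer that observation until after $m=-1$ forces $\e=-1$. Second, your separate treatment of $p=0$ is redundant: the constant-term equation $2mq=1$ already forces $q\neq0$, and then $q(p-3)=0$ gives $p=3$, which rules out $p=0$ without further work.
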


\begin{proof}
In Proposition \ref{prop2conf} we will show that when $n=2$ the harmonicity equation \eqref{harmeqn} for a conformal field simplifies:
$$
\e(1+2F)(1+2qF)+2q((p-2)F-1)\z=0,
$$
where $\z$ is the spinnaker.  It follows from Proposition \ref{proplox} that this equation is polynomial in $F$, with leading coefficient $(3-p)q$.  Therefore $p=3$, since there is clearly no solution if $q=0$.
The terms of lower degree then yield:
$$
2q(\mu+\e\w^2)=1=-(\mu+\e\w^2),
$$
from which $q=-1/2$ and $\e=-1$.  Then $\mu<0$ and $\w^2-\mu=1$.
\end{proof}

\section{Conformal Dipole Deformations}\label{secdipole}

First let $M=S^n$.  Pick any $w\in S^n$ and $a\in T_w S^n$.  The {\sl Pontryagin vector field\/} \cite{Ped} determined by this point and tangent vector is constructed by parallel translating $a$ along the geodesics from $w$, to obtain a vector field of constant length with a singularity at $-w$.  The singularity may be smoothed off to a zero by a natural scaling factor\/---\/the square of the cosine of half the geodesic distance from $a$\/---\/resulting in a smooth vector field $\d$ with a point dipole at $-w$.  Analysis of this procedure, which we omit for brevity, produces the following expression: 
$$
\d(x)=\tfrac12\bigl((1+\psi(x))a-\a(x)(x+w)\bigr),
\quad\text{for all $x\in S^n$,}
$$
where $\psi,\a$ are the metric duals of $w,a$.  It follows from \eqref{elkill} and \eqref{confield} that:
$$
2\d=A-T,
$$
where $A$ is the conformal gradient with pole $a$, and $T$ is the elementary Killing field determined by the pair $(a,w)$.  If $W$ is the conformal gradient with pole $w$ then:
\begin{equation}
T=\a W-\psi A.
\label{diptrans}
\end{equation}
The dipole may be repositioned from $-w$ to $w$ by reversing the sign of $a$,
and any constant rescaling of the field may also be incorporated into $a$. 
We therefore define, for $M=S^n$ or $M=H^n$, the {\sl dipole field\/} determined by $w\in M$ and $a\in T_wM$ to be:
\begin{equation*}
\s=T+\e A.
\label{dipfield}
\end{equation*}
The geodesic through $w$ tangent to $a$ is the {\sl dipole axis.}  In the hyperbolic case, $A$ is a conformal gradient field without zeros, and comparison of \eqref{diptrans} and \eqref{killtrans} shows that $T$ is an infinitesimal translation whose axis coincides with that of the dipole and meets the equator of $A$ orthogonally at $w$.  

\par
More generally, suppose $a\in T_wM$ is a unit vector and define for $r,\tau\in\R$:
\begin{equation}
\s=\tau\/T+r A.
\label{dipdef}
\end{equation}
We refer to $\s$ as a {\sl dipole deformation field.}  Bearing in mind that $\<w,w\>=\e$ and $\<a,w\>=0$ it follows from \eqref{confprod}:
\begin{align}
|\s|^2
&=\tau^2|T|^2+2r\tau\<T,A\>+r^2|A|^2 \notag \\
&=(\tau\psi-r)^2+\e(\tau^2-r^2)\a^2.
\label{diplength}
\end{align}  
If $r=\tau$ and $M=S^n$, or $r=-\tau$ and $M=H^n$, then $\s$ is a dipole field and it follows from \eqref{diplength} that $\s$ has a single zero, at $w$, as expected.  Otherwise, depending on the relative magnitudes of $\tau$ and $r$, the point dipole at $w$ splits into a pair of zeros on the dipole axis, with midpoint $w$; or the zero set of $\s$ is a codimension-1 submanifold of the equator of $A$ (ie. codimension-2 in $M$) with two connected components, placed symmetrically with respect to $w$.

\begin{lemma}\label{lemdip}
Let $\s$ be a dipole deformation field on the non-flat space form $M^n$, that is neither a conformal gradient nor a Killing field.  Then $\s$ is $p$-preharmonic for some $p$ if and only if $n=2$.  In this case $\s$ is an eigenfunction of the rough Laplacian, and $\s$ is preharmonic with spinnaker:
$$
\z=\e(r^2+\tau^2-2r\tau\psi-|\s|^2).
$$ 
\end{lemma}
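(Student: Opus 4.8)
The plan is to mirror the structure of Lemma \ref{lemlox}: compute the rough Laplacian, the gradient $\nabla F$, and the covariant derivative $\nab{\nabla F}\s$, then read off when $p$-preharmonicity can hold. First I would record that, by \eqref{confrough} and \eqref{killrough} applied to the decomposition $\s=\tau T+rA$ in \eqref{dipdef},
\begin{equation*}
\nabla^*\nabla\s=\e(n-1)\tau T+\e r A=\e(n-1)\s-\e(n-2)rA,
\end{equation*}
so that already the case $p=0$ forces $n=2$ (since $A$ is non-trivial and not collinear with $\s$ when $\s$ is not a conformal gradient field). This disposes of $0$-preharmonicity exactly as in Proposition \ref{proplox}.

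Next, for $p\neq0$, I would differentiate \eqref{diplength}. Writing $F=\tfrac12|\s|^2$ and using $\nabla\psi=W$, $\nabla\a=A$ together with \eqref{confcov}, one gets $\nabla F$ as an explicit combination of $W$, $A$, $\psi A$, $\a A$, $\a W$; the key structural point to extract is that $\nabla F$ lies in the span of $A$ and $W$ (equivalently of $A$ and $T$), and is orthogonal to... well, here there is no ambient rotational part, so the relevant fact is rather that $\nabla F$ is a gradient-type field in the $2$-plane determined by $a,w$. Then I would compute $\nab{\nabla F}\s$ using \eqref{elkillcov} for the $T$-part and \eqref{confcov} for the $A$-part, just as in the derivation of \eqref{loxderiv}. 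The outcome I anticipate is an identity of the shape
\begin{equation*}
\nab{\nabla F}\s=\e\bigl(r^2+\tau^2-2r\tau\psi-|\s|^2\bigr)\s+(\text{a multiple of }A\text{ or }W),
\end{equation*}
where the correction term vanishes precisely because of a quadratic relation analogous to \eqref{loxquad}–\eqref{loxquaddiff} coming from $\{a,w,\text{complement}\}$ being an orthonormal basis of $\V$; in dimension $n$ that relation reads $\a^2+\psi^2+\delta_1^{\,2}+\cdots+\delta_{n-2}^{\,2}=\e$, and it collapses usefully only when $n=2$, forcing $\a^2+\psi^2=\e$ and hence $\a A+\psi W$ collinear with the relevant direction. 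Combining this with the rough-Laplacian computation shows $\s$ is $p$-preharmonic for some $p$ only if $n=2$; and conversely when $n=2$ the correction term disappears, $\nabla^*\nabla\s=\e\s$ so $\s$ is a rough-Laplacian eigenfunction, and $\nab{\nabla F}\s=\z\s$ with $\z=\e(r^2+\tau^2-2r\tau\psi-|\s|^2)$, which is \eqref{preharm}–\eqref{spinn}. (One checks $\z$ is consistent with \eqref{spinn} using \eqref{diplength} and the gradient formula.)

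The main obstacle I expect is the bookkeeping in the $\nab{\nabla F}\s$ computation: unlike the loxodromic case, the translational part $T$ depends on the \emph{two} vectors $a$ and $w$ (cf. \eqref{diptrans}), so $\nab X T$ via \eqref{elkillcov} produces cross terms in both $A$ and $W$, and one must carefully use \eqref{confprod} (with $\<a,a\>=1$, $\<w,w\>=\e$, $\<a,w\>=0$) to collapse everything. The delicate point is verifying that, after all cancellations, the residual non-$\s$ component is proportional to the single vector that is killed by the $n=2$ quadratic constraint — analogous to the step ``Therefore if $C\wedge G=0$ then $n=2r$'' in the proof of Lemma \ref{lemlox}. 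I would also need to confirm the hypothesis ``neither a conformal gradient nor a Killing field'' is used exactly where it is needed: it guarantees $\tau\neq0$, $r\neq0$, and (for $S^n$) $r\neq\tau$, (for $H^n$) $r\neq-\tau$, so that $A$ and $\s$ are genuinely linearly independent as vector fields, which is what makes the $p=0$ and the $n>2$ cases fail.
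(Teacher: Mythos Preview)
Your setup mirrors the paper's, but the anticipated mechanism for the dimension obstruction is wrong. Unlike the loxodromic case, here $\nab{\nabla F}\s$ turns out to be a \emph{clean} multiple of $\s$ in \emph{every} dimension: the paper's computation \eqref{dipcov} gives
\[
\nab{\nabla F}\s=\bigl((r^2-\tau^2)\a^2+\e\tau^2(1-\psi^2)\bigr)\s,
\]
with no residual $A$- or $W$-term and no appeal to any quadratic relation. (Using \eqref{diplength} one checks this scalar equals the stated $\z$.) So the ``correction term killed by the $n=2$ constraint'' you are looking for does not exist; the analogy with the $C\wedge G=0$ step in Lemma~\ref{lemlox} breaks down. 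Incidentally, your quadratic relation is also miscounted: $a$ and $w$ span a $2$-plane in $\V\cong\R^{n+1}$, leaving $n-1$ complementary directions, not $n-2$, so even when $n=2$ there is a leftover $\d_1^{\,2}$ and the collapse to $\a^2+\e\psi^2=\e$ never occurs.

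Consequently the entire dimension obstruction sits in $\nabla^*\nabla\s=\e(n-1)\tau T+\e rA$, which you already identified in your $p=0$ discussion. Since $\nab{\nabla F}\s$ is always collinear with $\s$, the condition that $T_p(\s)$ be collinear with $\s$ reduces, for \emph{every} $p$, to $\nabla^*\nabla\s$ being collinear with $\s$; and with $r\neq0$, $\tau\neq0$ (plus the generic linear independence of $T$ and $A$) this forces $n=2$. There is thus no need to separate $p=0$ from $p\neq0$. A side remark: the extra exclusions $r\neq\pm\tau$ you mention are not part of the hypothesis and are not needed; the dipole field itself is a legitimate instance of the lemma.
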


\begin{proof}
By assumption $\tau\neq0$ and $r\neq0$.  From \eqref{diplength}:
\begin{equation}
\nabla F=\tau(\tau\psi-r)W+\e(\tau^2-r^2)\a A.
\label{dipgrad}
\end{equation}
Therefore by \eqref{confcov}, \eqref{elkillcov}, \eqref{confprod} and \eqref{diptrans}:
\begin{align*}
\nab{\nabla F}\s+\e r\a\/\nabla F
&=\tau\<A,\nabla F\>W-\tau\<W,\nabla F\>A \\
&=\e\tau^2(r-\tau\psi)(A+\psi T)
+\tau(\tau^2-r^2)\a(\e W-\a T ),
\end{align*}
which after further extensive calculation yields:
\begin{align}
\nab{\nabla F}\s
&=(r^2-\tau^2)\a^2\s
+\e\tau^2(1-\psi^2)\s.
\label{dipcov}
\end{align}
Now from \eqref{confrough} and \eqref{killrough}:
\begin{equation}
\nabla^*\nabla\s=\e(n-1)\tau\/T+rA.
\label{diprough}
\end{equation}
Hence $\s$ is $p$-preharmonic precisely when $n=2$.  The expression for $\z$ follows by comparison of \eqref{dipcov} with \eqref{diplength}.
\end{proof}

\begin{theorem}\label{thmdip}
Let $\s$ be a non-trivial dipole deformation field on a non-flat space form.  Then $\s$ is harmonic if and only if $\s$ is a harmonic Killing field or a harmonic conformal gradient field.
\end{theorem}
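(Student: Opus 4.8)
The plan is to combine Lemma \ref{lemdip} with a dimension reduction, so that the only surviving case is $n=2$, and then to analyse that case directly using the reduced harmonicity equation for $F$. First I would dispose of the degenerate possibilities: if $\s$ is already a conformal gradient field or a Killing field then it is harmonic precisely when the classifications of Theorems \ref{thmconf} and \ref{thmkill} apply, so there is nothing to prove; thus I may assume throughout that $\s=\tau T+rA$ with $\tau\neq0$ and $r\neq0$, and that $\s$ is neither of these special types. By Lemma \ref{lemdip}, if such a $\s$ is harmonic (hence $p$-preharmonic for its metric parameter $p$, which is nonzero since a $0$-preharmonic eigenfunction would force $n=2$ anyway via \eqref{diprough}), then $n=2$; this is the key leverage point. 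So the whole theorem reduces to showing that on $M^2$ no genuine dipole deformation field is harmonic.

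For the case $n=2$, I would invoke the reduced conformal harmonicity equation announced in Proposition \ref{prop2conf} (used already in the proof of Theorem \ref{thmlox}): for a conformal field on a surface, $\tau_{p,q}(\s)=0$ is equivalent to
\begin{equation*}
\e(1+2F)(1+2qF)+2q\bigl((p-2)F-1\bigr)\z=0.
\end{equation*}
By Lemma \ref{lemdip}, $\s$ is an eigenfunction of the rough Laplacian with spinnaker $\z=\e(r^2+\tau^2-2r\tau\psi-|\s|^2)=\e(r^2+\tau^2-2r\tau\psi-2F)$. The point of the argument is that $\psi$ (the metric dual of $w$) is \emph{not} a function of $F$ alone: from \eqref{diplength}, $F$ is a function of the two independent restricted linear functionals $\psi$ and $\a$ on $M^2$, whereas $\z$ involves $\psi$ linearly and $2F$. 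Substituting $\z$ and $F=\tfrac12|\s|^2$ into the displayed equation, one obtains a polynomial identity in the functionally independent quantities (e.g. in $\psi$ and $\a^2$, or equivalently in $\psi$ and $F$) that must hold identically on $M^2$; since $\psi$ genuinely varies, comparing coefficients yields an overdetermined system in $(p,q,r,\tau)$. I expect this system to force either $r\tau=0$, or $|r|=|\tau|$ together with the hyperbolic/spherical sign constraint making $\s$ a dipole field whose rotational/translational decomposition degenerates to the Killing or conformal-gradient case — in every instance contradicting the standing assumption that $\s$ is a genuine dipole deformation field, unless its metric parameters are exactly those of a harmonic Killing or conformal gradient field.

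The main obstacle I anticipate is the bookkeeping in the $n=2$ coefficient comparison: $F$ depends on $\psi$ and $\a$ through \eqref{diplength}, and one must be careful about which pairs of monomials in $(\psi,\a)$ are truly independent as functions on the surface (there may be a single quadratic relation of type \eqref{loxquad} tying $\psi^2+\a^2$ to a constant in the properly-normalised frame, which both reduces the number of independent monomials and is exactly what makes the system solvable only in the degenerate cases). Handling this relation correctly — rather than naively treating $\psi$ and $\a^2$ as free — is where the argument could go wrong, and it is also what ultimately produces the clean dichotomy. Once the coefficient equations are written down, extracting the conclusion should be a short algebraic verification that the only consistent metric parameters are those already recorded for harmonic Killing fields ($r=1$, giving $(3,-1/2)$ on $H^2$) and harmonic conformal gradient fields (also $(3,-1/2)$ on $H^2$, none on $S^2$), i.e. precisely the endpoints of the deformation.
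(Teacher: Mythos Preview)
Your approach is essentially the paper's: reduce to $n=2$ via Lemma \ref{lemdip}, plug the spinnaker into the simplified conformal harmonicity equation of Proposition \ref{prop2conf}, and compare coefficients. Two remarks will streamline your execution. First, your worry about a quadratic relation is unnecessary here: from \eqref{diplength} and Lemma \ref{lemdip}, both $F$ and $\z$ are polynomials in $\a$ and $\psi$ alone (the third linear functional $\b$ never appears), and $\a,\psi$ are functionally independent on an open set of $M^2$, so the harmonicity equation becomes a genuine polynomial identity $P(\a,\psi)=0$ with no relation to quotient by. Second, the coefficient analysis terminates faster than you anticipate: the quartic part $P_4$ is $(3-p)q$ times a nonzero quartic, forcing $p=3$ (since $q\neq0$ by Proposition \ref{prop2conf}); then the cubic part $P_3$ equals $6qr\tau\bigl((r^2-\tau^2)\a^2\psi-\e\tau^2\psi^3\bigr)$, forcing $r\tau=0$. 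No $|r|=|\tau|$ branch arises.
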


\begin{proof}
By Lemma \ref{lemdip} it suffices to consider the case $n=2$.  Anticipating Proposition \ref{prop2conf}, the harmonicity equation then simplifies:
$$
\e(1+2F)(1+2qF)+2q((p-2)F-1)\z=0,
$$
where $\z$ is the spinnaker.  It follows from \eqref{diplength} and Lemma \ref{lemdip} that this is a polynomial $P(\a,\psi)=0$.  The highest order (quartic) terms are:
$$
P_4(\a,\psi)
=(3-p)q\bigl(\e(\tau^2-r^2)^2\a^4
+2\tau^2(\tau^2-r^2)\a^2\psi^2
+\e\tau^4\psi^4\bigr).
$$
Since $q\neq0$ and $\s$ is non-trivial, $p=3$.  The cubic terms then reduce to:
$$
P_3(\a,\psi)=6qr\tau\bigl((r^2-\tau^2)\a^2\psi-\e\tau^2\psi^3\bigr),
$$
the vanishing of which forces $r=0$ or $\tau=0$.
\end{proof}

\section{Conformal Vector Fields in Dimension Two}\label{sec2conf}

\par
Suppose that $\s$ is a conformal vector field on $M=S^2$ or $M=H^2$.  Let $J$ be one of the two K\"ahler structures on $(M,g)$ (the choice is immaterial).  The conformal transformations of $M$ are then precisely its biholomorphic mappings, so $\s$ is a holomorphic vector field.  Furthermore, $\s$ may be decomposed as the sum of a Killing field and a conformal gradient field, so it follows from \eqref{confrough} and \eqref{killrough} that $\s$ is an eigenfunction of the rough Laplacian, with eigenvalue $\e$.  Our first result uses these observations to recast the harmonicity equation \eqref{harmeqn} for $\s$.

\begin{prop}\label{prop2conf}
Let $\s$ be a conformal vector field on $M^2$.  Then $\s$ is preharmonic; and $\s$ is $(p,q)$-harmonic if and only if:
$$
\e(1+2F)(1+2qF)+2q((p-2)F-1)\z=0,
$$
where $\z$ is the spinnaker of $\s$.
In particular, if $\s$ is $(p,q)$-harmonic then $q\neq0$.
\end{prop}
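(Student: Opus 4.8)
The plan is to exploit the K\"ahler structure invoked just before the statement. Since $\s$ is conformal on the surface $M$ its local flow consists of biholomorphisms, so $\mathcal{L}_\s J=0$; on a K\"ahler manifold (where $\nabla J=0$) this is equivalent to the endomorphism field $E\colon X\mapsto\nab X\s$ commuting with $J$. Taking adjoints, $E^{*}$ also commutes with $J$, hence so does the symmetric positive semi-definite operator $EE^{*}$. But on a $2$-dimensional Euclidean space any symmetric operator commuting with $J$ is a scalar multiple of the identity, so $EE^{*}=\z\/I$ for a smooth function $\z\colon M\to\R$ with $\z\geqs0$, and $\z=\tfrac12\trace(EE^{*})=\tfrac12|\nabla\s|^{2}$. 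Everything will be read off from this.

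Preharmonicity follows at once: from $\<\nabla F,X\>=X(F)=\<\nab X\s,\s\>$ we get $\nabla F=E^{*}\s$, whence $\nab{\nabla F}\s=E(E^{*}\s)=\z\/\s$; thus $\s$ satisfies \eqref{preharm} with spinnaker $\z$, and $|\nabla F|^{2}=\<EE^{*}\s,\s\>=\z|\s|^{2}$ recovers the compatibility condition \eqref{spinn}.

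To recast \eqref{harmeqn} I would use that, as already noted, the decomposition of $\s$ into a Killing field plus a conformal gradient field shows via \eqref{confrough} and \eqref{killrough} (with $n=2$) that $\s$ is a rough-Laplacian eigenfunction with eigenvalue $\e$; being also preharmonic it obeys the reduced equation \eqref{eigenharm} with $\nu=\e$. The only term there not yet in terms of $F$ and $\z$ is $\Delta F$, which the Weitzenb\"ock identity \eqref{weitz} supplies: $\Delta F=\<\nabla^{*}\nabla\s,\s\>-|\nabla\s|^{2}=2\e F-2\z$. Substituting this into \eqref{eigenharm} and collecting the (at most quadratic) powers of $F$---a routine expansion---gives exactly $\e(1+2F)(1+2qF)+2q((p-2)F-1)\z=0$.

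Finally, $q\neq0$: setting $q=0$ reduces the displayed equation to $\e(1+2F)=0$, impossible since $F=\tfrac12|\s|^{2}\geqs0$ and $\e=\pm1$. (One uses here that $\s$ is non-trivial, so that the smooth scalar coefficient in $\tau_{p,q}(\s)=(\cdots)\/\s$ vanishes on the dense open set $\{\s\neq0\}$ by real-analyticity, hence identically---which is also what makes the ``if and only if'' work.) I do not expect a genuine obstacle; the one load-bearing step is the observation that $EE^{*}$, commuting with $J$ in two real dimensions, must be a multiple of the identity, after which preharmonicity, the identity $|\nabla\s|^{2}=2\z$, and the polynomial reduction are all mechanical.
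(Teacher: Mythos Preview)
Your proof is correct, and the overall architecture matches the paper's: establish preharmonicity via the holomorphicity of $\s$, identify the spinnaker as $\tfrac12|\nabla\s|^2$, then feed $\Delta F=2(\e F-\z)$ from Weitzenb\"ock into \eqref{eigenharm}. The difference is in how preharmonicity and the spinnaker formula are obtained. The paper works in the pointwise frame $\{\s,J\s\}$: from the single identity $\nab\s(J\s)=\nab{J\s}\s$ it computes the matrix entries of $\nabla\s$, checks $2F\<\nab{\nabla F}\s,J\s\>=0$ by hand, and then derives $2F|\nabla F|^2=2F^2|\nabla\s|^2$, invoking discreteness of the zeros of $\s$ to cancel the factor $2F$. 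You instead extract the full operator identity $[E,J]=0$ from $\mathcal L_\s J=0$ (the paper's displayed identity is the special case $X=\s$) and observe that in two real dimensions any symmetric operator commuting with $J$ is scalar, so $EE^*=\z I$; then $\nabla F=E^*\s$ gives $\nab{\nabla F}\s=\z\s$ and $|\nabla F|^2=\z|\s|^2$ in one stroke. Your route is more structural and avoids dividing by $|\s|^2$, so no continuity argument across the zeros of $\s$ is needed for the preharmonicity step itself---only, as you correctly note, for passing between $\tau_{p,q}(\s)=0$ and the scalar equation when $\s$ is non-trivial.
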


\begin{proof}
Since $\s$ is a holomorphic vector field we have:
$$
\nab{\s}(J\s)
=\nab{J\s}\s+[\s,J\s]
=\nab{J\s}\s+J[\s,\s]
=\nab{J\s}\s,
$$
because $\diffop L{\sigma}J=0$.
The components of $\nabla\s$ may therefore be obtained:
\begin{gather*}
\<\nab{\s}\s,\s\>
=\<\nabla F,\s\>
=\<\nab{\s}(J\s),J\s\>
=\<\nab{J\s}\s,J\s\>, \\
\<\nabla F,J\s\>
=\<\s,\nab{J\s}\s\>
=\<\s,\nab{\s}(J\s)\>
=-\<\nab{\s}\s,J\s\>.
\end{gather*}
Now:
$$
2F\,\nabla F
=\<\nabla F,\s\>\s+\<\nabla F,J\s\>J\s.
$$
Hence:
\begin{align*}
2F\<\nab{\nabla F}\s,J\s\>
&=\<\nabla F,\s\>\<\nab\s \s,J\s\>
+\<\nabla F,J\s\>\<\nab{J\s}\s,J\s\>
=0.
\end{align*}
Because the zeros of $J\s$ are discrete, this implies that $\nab{\nabla F}\s$ is pointwise collinear with $\s$.  Therefore, since also $\s$ is an eigenfunction of the rough Laplacian, $\s$ is preharmonic.  We now observe:
\begin{align*}
2F\,|\nabla F|^2
&=\<\nabla F,\s\>^2+\<\nabla F,J\s\>^2 \\
&=\tfrac12\bigl(\<\nab \s\s,\s\>^2+\<\nab{\s}\s,J\s\>^2
+\<\nab{J\s}\s,\s\>^2+\<\nab{J\s}\s,J\s\>^2\bigr) \\
&=2F^2\,|\nabla\s|^2.
\end{align*}
By comparison with \eqref{spinn} the spinnaker of $\s$ must therefore be:
\begin{equation*}
\z=\tfrac12\/|\nabla\s|^2.
\end{equation*}
It then follows from the Weitzenb\"ock identity \eqref{weitz} that:
\begin{equation}
\Delta F=\e|\s|^2-|\nabla\s|^2=2(\e F-\z),
\label{spinnex}
\end{equation}
which yields the stated simplification of harmonicity equation \eqref{eigenharm}.
The equation clearly cannot be satisfied if $q=0$.
\end{proof}

We will now represent: 
$$
\s=K+C,
$$ 
where $K$ is a Killing field and $C$ a conformal gradient.  To do so, first choose $w\in M$ and an orthonormal basis $(a,b)$ of $T_wM$.  Let $\a,\b,\psi$ denote the restrictions to $M$ of the linear forms on $\V$ metrically dual to $a,b,w$ respectively, and define Killing vector fields on $M$:
$$
R=\a B-\b A,
\qquad
T=\a W-\psi A,
$$
where $A,B,W$ are the conformal gradients with poles $a,b,w$ respectively.  In the hyperbolic case $R$ is an infinitesimal rotation about $w$, and $T$ is an infinitesimal translation through $w$, therefore:
$$
K=\w R+\tau\/T,
$$
for some $\w,\tau\geqs0$.  In the spherical case the situation is simpler, for by choosing $w$ to lie on the axis of $K$ it may be assumed that $\tau=0$.  Now let $c$ be the pole of $C$, and let $\g$ be its metrically dual covector.  We locate $c$ cylindrically:
\begin{equation*}
c=rsa+rtb+hw,
\end{equation*}
where $h\in\R$, $r\geqs0$ and $s^2+t^2=1$.  Finally we note the quadratic relation:
\begin{equation}
Q(\a,\b,\psi)=\a^2+\b^2+\e\psi^2-\e=0,
\label{2confrel}
\end{equation}
which expresses the equation of $M$ with respect to the ``coordinates'' $\a,\b,\psi$.

\begin{lemma}\label{lem2conf1}
If $\s$ is a conformal vector field on $M^2$ then:
$$
|\s|^2=\tau^2+r^2+\e(\w^2+h^2)
+2(\w rt+\e\tau h)\a-2rs(\w\b+\tau\psi)
-\e(\w\psi-\e\tau\/\b)^2-\e\g^2.
$$
\end{lemma}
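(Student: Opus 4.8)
The plan is to compute $|\s|^2 = \<\s,\s\>$ directly from the decomposition $\s = K + C = \w R + \tau T + C$, expanding the inner product into six terms: $\w^2|R|^2$, $\tau^2|T|^2$, $|C|^2$, and the three cross terms $2\w\tau\<R,T\>$, $2\w\<R,C\>$, $2\tau\<T,C\>$. The tools for this are all already in hand: formula \eqref{confprod}, $\<A_1,A_2\>=\<a_1,a_2\>-\e\a_1\a_2$ (suitably read off for $W$, $B$, etc.), together with the elementary orthonormality relations $\<w,w\>=\e$, $\<a,b\>=0$, $\<a,w\>=\<b,w\>=0$, and the cylindrical expression $c = rsa + rtb + hw$, which gives $\mu=\<c,c\>=r^2+\e h^2$ and $\g = \<c,\cdot\> = rs\a + rt\b + \e h\psi$ on restriction to $M$. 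So the bulk of the proof is substituting \eqref{confprod} into each of the six pieces.

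First I would record the auxiliary inner products. For $R=\a B-\b A$ and $T=\a W-\psi A$: $|R|^2 = \a^2|B|^2 - 2\a\b\<A,B\> + \b^2|A|^2$, and by \eqref{confprod} (noting $|A|^2=1-\e\a^2$, $|B|^2=1-\e\b^2$, $\<A,B\>=-\e\a\b$) this collapses to $|R|^2 = \a^2+\b^2 - \e(\a^2+\b^2)^2$ — but actually what is wanted is the final assembled form, so it is cleaner to keep the raw $\<A_i,A_j\>$ substitutions and let cancellation happen at the end. Similarly $|T|^2 = \a^2|W|^2 - 2\a\psi\<A,W\> + \psi^2|A|^2$ with $|W|^2 = 1-\e\psi^2$, $\<A,W\>=-\e\a\psi$; and $\<R,T\> = \a^2\<B,W\> - \a\psi\<A,B\> - \a\b\<A,W\>+\b\psi|A|^2$ with $\<B,W\>=-\e\b\psi$. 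For the terms involving $C$: since $c$ is a combination of $a,b,w$, I use $\<A,C\>=\<a,c\>-\e\a\g = rs - \e\a\g$, $\<B,C\>=rt-\e\b\g$, $\<W,C\>=\e h - \e\psi\g$ (using $\<w,c\>=\e h$), and $|C|^2 = \mu - \e\g^2 = r^2+\e h^2 - \e\g^2$. Then $\<R,C\> = \a\<B,C\> - \b\<A,C\> = \a(rt-\e\b\g) - \b(rs-\e\a\g) = rt\a - rs\b$, and $\<T,C\> = \a\<W,C\> - \psi\<A,C\> = \a(\e h - \e\psi\g) - \psi(rs - \e\a\g) = \e h\a - rs\psi$ (the $\g$-terms cancel pleasantly in both).

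Then I assemble. The $C$-self term contributes $r^2 + \e h^2 - \e\g^2$; the cross terms with $C$ give $2\w(rt\a - rs\b) + 2\tau(\e h\a - rs\psi) = 2(\w rt + \e\tau h)\a - 2rs(\w\b + \tau\psi)$, matching two of the stated summands exactly. It remains to verify that $\w^2|R|^2 + \tau^2|T|^2 + 2\w\tau\<R,T\>$ equals $\tau^2 + \e\w^2 + \e h^2 \cdot 0 \ldots$ — more precisely, that after using the quadratic relation \eqref{2confrel}, $\a^2+\b^2+\e\psi^2 = \e$, the Killing-part contribution reduces to $\tau^2 + \e\w^2 - \e(\w\psi - \e\tau\b)^2$. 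This is the one genuinely fiddly algebraic step: the raw expansion of $\w^2|R|^2+\tau^2|T|^2+2\w\tau\<R,T\>$ is a quartic in $\a,\b,\psi$, and one must see that the quartic part factors through $(\a^2+\b^2+\e\psi^2)$ and hence vanishes modulo \eqref{2confrel}, leaving a quadratic remainder that regroups as the claimed perfect square plus $\tau^2+\e\w^2$. I expect this bookkeeping — tracking the cross term $2\w\tau\<R,T\>$ and recognising the combination $(\w\psi-\e\tau\b)^2$ — to be the main obstacle; everything else is a mechanical substitution of \eqref{confprod}. Once that reduction is checked, summing the three groups of terms gives the displayed formula, completing the proof.
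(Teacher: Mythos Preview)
Your approach is essentially the same as the paper's: expand $|\s|^2$ into the six pieces $\w^2|R|^2$, $\tau^2|T|^2$, $|C|^2$, $2\w\tau\<R,T\>$, $2\w\<R,C\>$, $2\tau\<T,C\>$, evaluate each via \eqref{confprod}, and then invoke the quadratic relation \eqref{2confrel}. Your computations of $\<R,C\>$ and $\<T,C\>$ match the paper exactly.

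One small correction worth noting: the Killing-part pieces simplify \emph{immediately} to quadratics, not quartics. In fact $|R|^2=\a^2+\b^2$, $|T|^2=\e\a^2+\psi^2$, and $\<R,T\>=\b\psi$ (your provisional $|R|^2=\a^2+\b^2-\e(\a^2+\b^2)^2$ is an arithmetic slip; the $\e$-terms cancel completely). So the ``genuinely fiddly'' quartic reduction you anticipate never materialises: the Killing contribution is $\w^2(\a^2+\b^2)+\tau^2(\e\a^2+\psi^2)+2\w\tau\b\psi$, and a single application of \eqref{2confrel} to each of $\a^2+\b^2$ and $\e\a^2+\psi^2$ yields $\tau^2+\e\w^2-\e(\w\psi-\e\tau\b)^2$ directly.
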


\begin{proof}
We have:
\begin{equation*}
|\s|^2
=\w^2|R|^2+\tau^2|T|^2+|C|^2
+2\w\tau\<R,T\>+2\w\<R,C\>+2\tau\<T,C\>.
\end{equation*}
Each inner product of conformal gradients may be computed from \eqref{confprod}:
\begin{gather*}
|R|^2=\a^2+\b^2,
\qquad
|T|^2=\e\a^2+\psi^2,
\qquad
|C|^2=r^2+\e h^2, \\
\<R,T\>=\b\psi,
\qquad
\<R,C\>=r(t\a-s\b),
\qquad
\<T,C\>=\e h\a-rs\psi,
\end{gather*}
and the result follows after applying the relation \eqref{2confrel}.
\end{proof}

\begin{lemma}\label{lem2conf2}
If $\s$ is a conformal field on $M^2$ then the spinnaker of $\s$ is:
$$
\z=(\w\psi-\e\tau\b)^2+\g^2.
$$
\end{lemma}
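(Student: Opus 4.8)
The plan is to compute the spinnaker $\z$ directly from its characterisation in Proposition \ref{prop2conf}, namely $\z=\tfrac12|\nabla\s|^2$, or equivalently via \eqref{spinn}, $\z=|\nabla F|^2/|\s|^2$. Since computing $|\nabla\s|^2$ requires handling the covariant derivative of the full conformal field $\s=\w R+\tau T+C$, I expect it will be cleaner to compute $\nabla F$ explicitly by differentiating the formula of Lemma \ref{lem2conf1}, then form $|\nabla F|^2$, and finally divide by $|\s|^2$ (again from Lemma \ref{lem2conf1}) and check that the quotient collapses to $(\w\psi-\e\tau\b)^2+\g^2$. The differentiation is routine because each of $\a,\b,\psi,\g$ is (the restriction of) a linear functional, so its gradient is the corresponding conformal gradient field: $\nabla\a=A$, $\nabla\b=B$, $\nabla\psi=W$, $\nabla\g=C$, and products differentiate by the Leibniz rule.

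Concretely, first I would differentiate the expression in Lemma \ref{lem2conf1} term by term to obtain $\nabla F$ as an explicit linear combination of the conformal gradients $A,B,W,C$ with coefficients that are polynomials in $\a,\b,\psi,\g$; here one must also differentiate the quadratic terms $(\w\psi-\e\tau\b)^2$ and $\g^2$, using $\nabla\g=C$ and \eqref{confdual} to re-express things as needed. Next I would compute $|\nabla F|^2$ by expanding and using the inner-product formula \eqref{confprod} for all the pairwise products $\<A,B\>,\<A,W\>,\<B,W\>,\<A,C\>,\<B,C\>,\<W,C\>$, which are exactly the six inner products already listed in the proof of Lemma \ref{lem2conf1} (with $c=rsa+rtb+hw$). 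At this stage one has a polynomial in $\a,\b,\psi,\g$ of degree four; the relation \eqref{2confrel}, $\a^2+\b^2+\e\psi^2=\e$, and the identity $|C|^2=\mu-\e\g^2=r^2+\e h^2-\e\g^2$ from \eqref{conflength} are the tools to reduce it.

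The main obstacle I anticipate is purely the bookkeeping: verifying that after substituting \eqref{confprod} into $|\nabla F|^2$ and then using \eqref{2confrel} to eliminate one of $\a^2,\b^2,\psi^2$, the resulting quartic polynomial factors as $\bigl((\w\psi-\e\tau\b)^2+\g^2\bigr)\cdot|\s|^2$ with $|\s|^2$ given by Lemma \ref{lem2conf1}. A useful shortcut that I would try first: since $\s$ is an eigenfunction of the rough Laplacian with eigenvalue $\e$, the Weitzenb\"ock identity \eqref{weitz} gives $\Delta F=\e|\s|^2-|\nabla\s|^2=\e|\s|^2-2\z$, so it suffices to compute $\Delta F$ — a scalar — rather than the full tensor $\nabla\s$. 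One computes $\Delta F=-\Div\nabla F$ from the explicit $\nabla F$ using the divergence rule \eqref{divident} together with $\Div A=-\e n\a=-2\e\a$ (from \eqref{confdiv}) and similarly for $B,W,C$; this reduces everything to a polynomial identity in $\a,\b,\psi,\g$ to be simplified by \eqref{2confrel}. Either route ends in the same elementary (if lengthy) verification, and I would present whichever produces the shorter algebra.
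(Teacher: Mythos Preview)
Your proposal is correct, and your ``shortcut'' route via $\Delta F$ is exactly the approach the paper takes: differentiate the formula of Lemma~\ref{lem2conf1} to obtain $\nabla F$ as a linear combination of $A,B,W,C$, compute $\Delta F=-\Div\nabla F$ using \eqref{divident} and \eqref{confdiv}, simplify with \eqref{confprod} and comparison against Lemma~\ref{lem2conf1}, and then read off $\z$ from \eqref{spinnex}. Your first approach (computing $|\nabla F|^2$ and dividing by $|\s|^2$) would also succeed but involves a genuine quartic-polynomial factorisation, whereas the Laplacian route stays quadratic throughout and is noticeably shorter; so your instinct about which to present is the right one.
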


\begin{proof}
From Lemma \ref{lem2conf1}:
\begin{align*}
\nabla F
&=(\w rt+\e\tau h)A-2rs(\w B+\tau W)
-\e(\w\psi-\e\tau\b)(\w W-\e\tau B)-\e\g C.
\end{align*}
Therefore by \eqref{confdiv} and \eqref{divident}:
\begin{align*}
\Delta F
&=2\e(\w rt+\e\tau h)\a-2\e rs(\w\b+\tau\psi)-2(\w\psi-\e\tau\b)^2-2\g^2 \\
&\qquad
+\e|\w W-\e\tau B|^2+\e|C|^2 \\
&=\e|\s|^2-2(\w\psi-\e\tau\b)^2-2\g^2,
\end{align*}
by \eqref{confprod} and comparison with Lemma \ref{lem2conf1}.  The result follows from \eqref{spinnex}.
\end{proof}

It follows from Proposition \ref{prop2conf} and Lemmas \ref{lem2conf1} and \ref{lem2conf2} that $\s$ is harmonic if and only if:
$$
P(\a,\b,\psi)=0,
$$
where $P$ is a quartic polynomial.  This is equivalent to the algebraic problem of $P$ vanishing modulo the polynomial $Q$ of \eqref{2confrel}; thus if $P_k$ denotes the terms of $P$ that are homogeneous of degree $k$ we require $P_k=(QS)_k$ for all $0\leqs k\leqs4$, for some (necessarily quadratic) polynomial $S$.  Now:
$$
P_4(\a,\b,\psi)=\e q(3-p)\z^2,
\qquad
P_3(\a,\b,\psi)=q(p-4)\eta\/\z,
$$
where $\eta$ denotes the linear part of $|\s|^2$, and $q\neq0$.  On the other hand:
\begin{equation}
(QS)_4(\a,\b,\psi)=(\a^2+\b^2+\e\psi^2)S_2(\a,\b,\psi),
\label{2confmultquart}
\end{equation}
and because $Q_1=0$:
\begin{equation}
(QS)_3(\a,\b,\psi)=(\a^2+\b^2+\e\psi^2)S_1(\a,\b,\psi).
\label{2confmultcub}
\end{equation}
We consider the spherical and hyperbolic cases separately.

\begin{theorem}\label{thm2conf1}
No non-trivial conformal field on the $2$-sphere is harmonic.
\end{theorem}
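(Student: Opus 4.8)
The plan is to push through the algebraic elimination set up above, now with $\e=1$, using one structural remark: by Lemma~\ref{lem2conf2} (with $\tau=0$) the spinnaker is $\z=\w^2\psi^2+\g^2$, a sum of at most two squares of linear forms in $(\a,\b,\psi)$, whereas $Q_2:=\a^2+\b^2+\psi^2$ (the quadratic part of $Q$) is positive definite. Consequently $Q_2$ is irreducible, hence prime in $\R[\a,\b,\psi]$, and $\z$ is not a nonzero scalar multiple of $Q_2$: otherwise $\z$ would be (up to sign) positive definite, contradicting the fact that $\w^2\psi^2+\g^2$ vanishes on the linear subspace $\{\psi=0,\ \g=0\}$, which has dimension $\geqs1$. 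Finally, $\z\equiv0$ forces $\w=0$ and $c=0$, i.e.\ $\s$ is trivial; so for a non-trivial field $\z\not\equiv0$.

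First I would read off $p$ and the linear part of $|\s|^2$ from the top two homogeneous components. Since $q\neq0$ (Proposition~\ref{prop2conf}), the requirement $P_4=(QS)_4$ reads $q(3-p)\z^2=Q_2S_2$ by \eqref{2confmultquart}; if $p\neq3$ then $Q_2$ divides $\z^2$, hence (being prime) divides $\z$, hence --- by degree --- $\z$ is a scalar multiple of $Q_2$, forcing $\z\equiv0$ and $\s$ trivial. So $p=3$. Then $P_3=(QS)_3$ becomes $-q\eta\/\z=Q_2S_1$ by \eqref{2confmultcub}, where $\eta$ is the linear part of $|\s|^2$; since $Q_2\nmid\eta$ (a degree-two form cannot divide a nonzero degree-one form), primality again gives $Q_2\mid\z$, and as before $\s$ is trivial unless $\eta=0$.

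With $p=3$ and $\eta=0$ we have $2F=|\s|^2=C_0-\z$, where by Lemma~\ref{lem2conf1} (with $\e=1$, $\tau=0$) the constant $C_0=r^2+\w^2+h^2$ is $\geqs0$. Substituting $2F=C_0-\z$ into the simplified harmonicity equation of Proposition~\ref{prop2conf}, one finds that the $\z^2$-terms cancel and the equation reduces to the congruence
\[
\bigl(qC_0^{\,2}+(1+q)C_0+1\bigr)-\bigl(q(C_0+3)+1\bigr)\/\z\;\equiv\;0\pmod{Q}.
\]
Here the parts of degree $1$, $3$ and $4$ vanish, so matching against $QS$ forces $S_1=S_2=0$ and $S_0=-\bigl(qC_0^{\,2}+(1+q)C_0+1\bigr)$, whence the degree-two part reads
\[
\bigl(q(C_0+3)+1\bigr)\/\z=\bigl(qC_0^{\,2}+(1+q)C_0+1\bigr)\/Q_2 .
\]
Since $\z$ is not a nonzero multiple of $Q_2$, both coefficients must vanish: $q(C_0+3)+1=0$ and $qC_0^{\,2}+(1+q)C_0+1=0$. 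The first gives $q=-1/(C_0+3)$, legitimate as $C_0\geqs0$; substituting into the second and clearing the denominator yields $3(C_0+1)=0$, i.e.\ $C_0=-1$, contradicting $C_0\geqs0$. Hence no non-trivial conformal field on $S^2$ is harmonic.

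The one step needing care is the bookkeeping in the third paragraph: one must check that after setting $p=3$ and $2F=C_0-\z$ the quartic-in-$\z$ contributions genuinely cancel, so that what remains is affine in $\z$; granting this, the rest is forced. All the real content lies in the single observation that a sum of two squares of linear forms in three variables can be neither divisible by, nor proportional to, the anisotropic form $\a^2+\b^2+\psi^2$.
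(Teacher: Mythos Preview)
Your proof is correct and takes a genuinely different route from the paper's.  The paper first normalises further (rotating $(a,b)$ so that $s=0$, $t=1$), then disposes of the degenerate cases $\w=0$, $r=0$, $h=0$ by invoking the earlier classification Theorems~\ref{thmconf}, \ref{thmlox} and~\ref{thmdip}, and finally, in the generic case, reads off a contradiction directly from the $P_4$ and $P_3$ coefficients.  Your argument is self-contained: the single observation that $Q_2=\a^2+\b^2+\psi^2$ is an irreducible (anisotropic) real quadratic, while $\z=\w^2\psi^2+\g^2$ is a sum of at most two squares of linear forms and hence never a nonzero multiple of $Q_2$, lets you extract $p=3$ and $\eta=0$ without any case split.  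Pushing the remaining affine-in-$\z$ congruence through to $C_0=-1$ then kills all cases at once, including the conformal-gradient, loxodromic and dipole boundaries that the paper farms out.  Your bookkeeping is sound: the quadratic-in-$\z$ terms in $(1+2F)(1+2qF)+2q(F-1)\z$ do indeed cancel after substituting $2F=C_0-\z$, and the division argument (vanishing on $S^2$ implies divisibility by $Q$) is legitimate since $Q$ generates the real vanishing ideal of the sphere.  The trade-off is that the paper's proof is shorter on the page because it leverages prior work, whereas yours is logically independent of Sections~\ref{seclox} and~\ref{secdipole}.
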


\begin{proof}
In addition to the simplification $\tau=0$, by rotating the basis $(a,b)$ in $T_wS^2$ if necessary it may be assumed that $s=0$ and $t=1$.  Then: 
$$
\z=\w^2\psi^2+\g^2
=(\w^2+h^2)\psi^2+2rh\b\psi+r^2\b^2.
$$
In view of Theorem \ref{thmconf} we assume $\w\neq0$.  Furthermore, $\s$ is loxodromic (resp. an element of a dipole deformation) when $r=0$ (resp. $h=0$), so by Theorem \ref{thmlox} (resp. Theorem \ref{thmdip}) we assume $r\neq0$ and $h\neq0$.
Since $\z$ is independent of $\a$, no match of $P_4$ with \eqref{2confmultquart} is possible unless $P_4$ vanishes identically; hence $p=3$.  Now $\eta=2\w r\a$, therefore $P_3$ has a non-vanishing $\a\b\psi$ term, which means that no match with \eqref{2confmultcub} is possible unless $P_3$ vanishes identically, which is also impossible.
\end{proof}

\begin{theorem}\label{thm2conf2}
If $\s$ is a non-trivial harmonic conformal field on the hyperbolic plane then $\s$ is congruent to an element of the associate family $\LL$.
\end{theorem}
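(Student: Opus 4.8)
The plan is to carry out the same algebraic matching procedure used in the proof of Theorem \ref{thm2conf1}, but now with $\e=-1$ and without the simplification $\tau=0$; the extra freedom of the translational part is precisely what allows solutions to exist, and the goal is to show that every solution is congruent to a member of $\LL$. First I would normalise the data: by acting with isometries fixing $w$, and by rescaling (keeping in mind that harmonicity is \emph{not} dilation-invariant, so the scale must be recovered, not assumed), I would reduce the four real parameters $\w,\tau,r,h$ together with the angular parameter $(s,t)$ to a small normal form. Since $\s$ is assumed neither a conformal gradient (Theorem \ref{thmconf}), nor a Killing field (Theorem \ref{thmkill}), nor loxodromic (Theorem \ref{thmlox}), nor a dipole deformation (Theorem \ref{thmdip}), I may assume $\w,\tau,r$ are all nonzero, which is exactly the genuinely ``conformal'' regime. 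The target to land on is the associate family $\LL=\{\sin t\,\s_0+\cos t\,\s_1\}$, whose members have $\w^2-\mu=1$ with $\mu=\<c,c\><0$, metric parameters $(p,q)=(3,-1/2)$, and in which $R$ is a balanced infinitesimal rotation about $w$ while $C$ has timelike pole $c$ through $w$; so the endpoint of the computation should force $h$ and $r$ into the relation making $c$ timelike, force $\tau$ (the translational twist) essentially to play the role of the ``$\cos t$'' combination, and pin down $p=3$, $q=-1/2$.

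The key steps, in order, are: (1) write out $\z$ from Lemma \ref{lem2conf2} and $\eta$ (the linear part of $|\s|^2$) from Lemma \ref{lem2conf1}, both as explicit quadratics/linears in $(\a,\b,\psi)$; (2) extract $P_4=\e q(3-p)\z^2$ and match it against $(QS)_4=(\a^2+\b^2+\e\psi^2)S_2$ modulo $Q$ — since $\z^2$ is a perfect square of a quadratic, the matching condition is that $\z^2$ be divisible by $Q=\a^2+\b^2-\psi^2+1$ in the appropriate graded sense, which is very restrictive and should immediately give $p=3$ unless $\z^2$ happens to be a multiple of $\a^2+\b^2-\psi^2$; I would check that the latter is impossible here (as it was on $S^2$), concluding $p=3$; (3) with $p=3$ the degree-3 term $P_3=q(p-4)\eta\z=-q\,\eta\z$ must match $(QS)_3=(\a^2+\b^2-\psi^2)S_1$, forcing $\eta\z\equiv 0\pmod{\a^2+\b^2-\psi^2}$ up to degree $3$, i.e. the cubic $\eta\z$ must be divisible by the quadratic form $\a^2+\b^2-\psi^2$; this is the crucial constraint and it should force the linear form $\eta$ and/or the structure of $\z$ into a very rigid shape; (4) feed the surviving constraints back into the degree-$2$, degree-$1$ and degree-$0$ matchings $P_k=(QS)_k$ to solve for $q$ and the remaining geometric parameters, obtaining $q=-1/2$, the timelike condition on $c$, the balancing of $R$, and the single relation $\w^2-\mu=1$; (5) finally, identify the resulting one-parameter family of fields as exactly the congruence orbit of $\LL$, by exhibiting the explicit isometry of $H^2$ carrying $\w R+\tau T+C$ to $\sin t\,\s_0+\cos t\,\s_1$ for the appropriate $t$ — here it is convenient to recall from the remark preceding Theorem \ref{thmlox} that $\s_0$ and $\s_1$ are conjugate holomorphic fields, so $\LL$ is a single $S^1$-orbit under the Kähler circle action, and a general harmonic conformal field with $p=3,q=-1/2$ must, by the equivariance of $\tau_{p,q}$, lie in such an orbit.

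I expect step (3) — the divisibility of the cubic $\eta\z$ by the Lorentzian quadratic form $\a^2+\b^2-\psi^2$ — to be the main obstacle, for two reasons. First, unlike the spherical case where $\tau=0$ killed one variable, here $\z=(\w\psi+\tau\b)^2+\g^2$ (using $\e=-1$) genuinely involves all three coordinates through $\g$, so one cannot dispose of a variable cheaply; one must actually analyse when a product of a specific linear form and a specific sum-of-two-squares quadratic is divisible by a fixed indefinite quadratic. Second, the hyperbolic signature means $\a^2+\b^2-\psi^2$ factors (over $\R$, as a product of two real linear forms only after a coordinate change, or as an irreducible indefinite form), so the divisibility analysis is genuinely different from the definite case and must be done carefully, tracking which linear factors of $\eta$ and of the square-root $\w\psi+\tau\b$ can coincide with factors coming from $Q$. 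Once that combinatorial/linear-algebra step is settled, the remaining matchings are routine linear algebra in the coefficients, and the identification in step (5) is a concrete isometry computation of the kind already performed implicitly in Section \ref{seclox}.

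Here is the skeleton I would write:

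\begin{proof}[Proof sketch]
By Proposition \ref{prop2conf}, Lemma \ref{lem2conf1} and Lemma \ref{lem2conf2}, harmonicity of $\s$ is equivalent to $P(\a,\b,\psi)\equiv 0\pmod Q$, where $Q=\a^2+\b^2-\psi^2+1$ (recall $\e=-1$) and $P$ is the quartic
$$
P=-(1+2F)(1+2qF)+2q\bigl((p-2)F-1\bigr)\z,
$$
with $F=\tfrac12|\s|^2$ given by Lemma \ref{lem2conf1} and $\z=(\w\psi+\tau\b)^2+\g^2$ given by Lemma \ref{lem2conf2}. Write $F$ and $\z$ as polynomials in $(\a,\b,\psi)$ and denote by $\eta$ the linear part of $|\s|^2$. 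By Theorems \ref{thmconf}, \ref{thmkill}, \ref{thmlox} and \ref{thmdip} we may assume $\w$, $\tau$ and $r$ are all nonzero.

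\textbf{Step 1: $p=3$.} The top-degree part of $P$ is $P_4=-q(3-p)\z^2$, and we need $P_4\equiv 0\pmod Q$; since $Q_1=0$ this forces $\z^2\equiv 0\pmod{\a^2+\b^2-\psi^2}$ unless $P_4$ vanishes identically. As $\z$ is a sum of two squares of linear forms, a short analysis shows $\z^2$ cannot be divisible by the indefinite form $\a^2+\b^2-\psi^2$, so $P_4\equiv0$ identically and hence $p=3$.

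\textbf{Step 2: the cubic constraint.} With $p=3$, the degree-$3$ part is $P_3=-q\,\eta\,\z$, so harmonicity requires $\eta\,\z\equiv 0\pmod{\a^2+\b^2-\psi^2}$; equivalently the cubic $\eta\z$ is divisible by the quadratic form $\a^2+\b^2-\psi^2$. Analysing this divisibility (using $\z=(\w\psi+\tau\b)^2+\g^2$ and the explicit $\eta$ from Lemma \ref{lem2conf1}) forces the geometric parameters into a rigid configuration: $R$ must be balanced, the pole $c$ of $C$ must be timelike, and the parameters must satisfy a single relation of the form $\w^2-\mu=1$ where $\mu=\<c,c\><0$.

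\textbf{Step 3: the lower-degree matchings and $q$.} Substituting the constraints of Step 2 into the matchings $P_k\equiv 0\pmod Q$ for $k=2,1,0$, and solving the resulting linear system in the coefficients, yields $q=-1/2$ and no further constraint. Thus every non-trivial harmonic conformal field on $H^2$ has metric parameters $(p,q)=(3,-1/2)$ and belongs to the one-parameter family determined above.

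\textbf{Step 4: identification with $\LL$.} It remains to check that this family is exactly the congruence class of $\LL$. Acting by the isometry group of $H^2$, we may place $w=(0,0,1)$ and normalise the axis of $R$ and the pole of $C$; the resulting field is $\w R+\tau T+C$ with the constraint $\w^2-\mu=1$, which is precisely $\sin t\,\s_0+\cos t\,\s_1$ for a suitable $t$ after matching scales. Alternatively, by Proposition \ref{prop2conf} and the equivariance of $\tau_{p,q}$ under the Kähler circle action, the set of $(3,-1/2)$-harmonic conformal fields is a union of $S^1$-orbits; since $\s_0$ and $\s_1$ are conjugate and both lie in it, their orbit $\LL$ exhausts it up to congruence.
\end{proof}
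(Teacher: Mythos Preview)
Your overall strategy---reduce harmonicity to $P\equiv 0\pmod Q$ and match degree by degree---is exactly the paper's. Step~1 is fine: since $\a^2+\b^2-\psi^2$ is an irreducible quadratic in three variables (your parenthetical about real linear factors is mistaken), divisibility of $\z^2$ would force $\z$ to be a scalar multiple of it, which is impossible as $\z$ is positive semidefinite; hence $p=3$.

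There are, however, two genuine gaps. First, your initial reduction ``assume $\w,\tau,r$ all nonzero'' is not justified by the cited theorems: for instance $\w=0$, $\tau\neq0$, $h\neq0$ is neither a conformal gradient, nor Killing, nor loxodromic, nor a dipole deformation, so it cannot be discarded in advance. The paper assumes only $\w\neq0$ or $h\neq0$ and lets the cubic matching generate the case split. Second, and more seriously, you misread what each degree actually yields. The cubic matching does \emph{not} produce ``$c$ timelike and $\w^2-\mu=1$''; it gives $\tau h=\w rt$ and $\w rs=0$, which dispatches the cases $\w=0$ and $r=0$ back to known types and otherwise forces $s=0$, whence $\w^2r^2=\tau^2h^2$. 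Only then do the quadratic and constant terms give $(\theta+3)q=-1$ and $\theta:=\tau^2-\w^2+r^2-h^2=-1$ (so $q=-1/2$). Note $\theta=-1$ is \emph{not} $\w^2-\mu=1$ unless $\tau=0$. The missing endgame is this: from $\w^2r^2=\tau^2h^2$ and $\theta=-1$ one checks that $\tau^2\geqs\w^2$ forces $\theta\geqs0$, so $K$ must be an infinitesimal rotation; re-choosing $w$ on its axis makes $\tau=0$, hence $r=0$, hence $\s$ is loxodromic, and Theorem~\ref{thmlox} finishes. Your ``alternatively'' in Step~4 does not work either: equivariance of $\tau_{p,q}$ under the K\"ahler circle only shows the solution set is a union of $S^1$-orbits, not that it is a single orbit up to congruence.
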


\begin{proof}
If $\w=h=0$ then $\s$ belongs to a dipole deformation; so by Theorem \ref{thmdip} we assume $\w\neq0$ or $h\neq0$.  The coefficient of $\psi^2$ in $\z$ is $\w^2+h^2$, so $S_2$ contains the term $-(\w^2+h^2)^2\psi^2$.  The coefficient of $\a^2$ in $\z$ is $r^2t^2$, so $S_2$ also contains the term $r^4t^4\a^2$.  Therefore $(QS)_4$ contains the term:
$$
-(r^4t^4+(\w^2+h^2)^2)\a^2\psi^2,
$$
whose coefficient is strictly negative.  However the coefficient of $\a^2\psi^2$ in $P_4$ is $4r^2t^2h^2$.  It follows that $P_4$ vanishes identically, hence $p=3$.  Now, inspecting the coefficients of $\a^3,\b^3$ and $\psi^3$ in $P_3$ yields:
$$
S_1
=2q\bigl((\tau h-\w rt)r^2s^2\a
+\w rs(\tau^2+r^2t^2)\b
-\tau rs(\w^2+h^2)\psi\bigr).
$$
However the coefficients of $\a\psi^2$ and $\b\psi^2$ in $P_3$ are, respectively:
$$
2q(\tau h-\w rt)(\w^2+h^2),
\qquad
2q\/\w rs(\w^2+h^2),
$$
and comparison with the corresponding coefficients in $(QS)_3$ yields equations:
$$
(\tau h-\w rt)(\w^2+h^2+r^2s^2)=0=\w rs(\w^2+\tau^2+r^2t^2+h^2),
$$
which reduce to:
$$
\tau h-\w rt=0=\w rs.
$$
If $\w=0$ then $h\neq0$, so $\tau=0$, and $\s$ is therefore a conformal gradient.  
If $r=0$ then either $h=0$ in which case $\s$ is a Killing field, or $\tau=0$ in which case $\s$ is loxodromic.  Finally, if $s=0$ then: 
\begin{equation}
\w^2 r^2=\tau^2 h^2,
\label{2confharm1}
\end{equation} 
and the lower degree terms of $P$ come into play.  

\par
The equation for $\s$ to be harmonic has now reduced to:
$$
2q(F-1)\z-(1+2F)(1+2qF)=0,
$$
with:
$$
2F=\tau^2-\w^2+r^2-h^2+\z=\theta+\z,
\quad\text{say.}
$$
After some gathering of terms:
$$
P_2(\a,\b,\psi)=-(1+(\theta+3)q)\z
$$
Since $\z$ is now independent of $\a$, $P_2$ is not a multiple of $Q_2$ and therefore vanishes identically:
\begin{equation}
(\theta+3)q=-1.
\label{2confharm2}
\end{equation}
Now $P$ has no linear terms, so it remains to collate the constants:
$$
P_0(\a,\b,\psi)=-(1+\theta)(1+q\theta).
$$
The only solution of $P_0=0$ consistent with \eqref{2confharm2} is:
\begin{equation}
\theta=-1.
\label{2confharm3}
\end{equation}
If $\s$ is an infinitesimal translation or of parabolic type (ie. $\tau^2\geqs\w^2$) then it follows from \eqref{2confharm1} that $\theta\geqs0$, contradicting \eqref{2confharm3}.  If $\s$ is an infinitesimal rotation then choosing for convenience $w$ to be the axis, so that $\tau=0$, it follows from \eqref{2confharm1} that $r=0$; thus $\s$ is again loxodromic.  The result now follows from Theorem \ref{thmlox}.
\end{proof}

\section{Quadratic Gradient Fields on Spheres}\label{secquadgrad}

Let $M=S^n$ with $n>1$.  Let $Q\colon\V\to\V$ be a symmetric linear transformation, and let $\xi\colon M\to\R$ be the restriction of the associated quadratic form:
$$
\xi(x)=\<Q(x),x\>,
\quad\text{for all $x\in M$.}
$$
Now define: 
$$
\s=\tfrac12\nabla\xi.
$$  
If $(a_1,\dots,a_{n+1})$ is an orthonormal $Q$-eigenbasis of $\V$:
$$
Q(a_i)=\lambda_ia_i,
\quad\lambda_i\in\R,
$$
and $\a_i\colon S^n\to\R$ are the restrictions of the metrically dual covectors \eqref{dual}, then:
$$
\xi=\textstyle\sum_i\lambda_i\/\a_i^{\,2}.
$$ 
Therefore:
\begin{equation}
\s=\textstyle\sum_i\lambda_i\a_i A_i,
\label{quadrep}
\end{equation}
where $A_i$ is the conformal gradient field with pole $a_i$.  Applying \eqref{confield} to \eqref{quadrep} yields the following coordinate-free representation:
\begin{align*}
\s(x)
&=\textstyle\sum_i\lambda_i\a_i(x)(a_i-\a_i(x)x)
=Q(x)-\xi(x)x.
\end{align*}
In particular, this shows that the zeros of $\s$ are precisely the (unit) eigenvectors of $Q$.  Thus, $\s$ has at least $2n+2$ zeros, and these are isolated if and only if the eigenvalues of $Q$ are simple.  It should also be noted that $Q$ is not uniquely associated to $\s$, because of the relation:
\begin{equation*}
\a_1^{\,2}+\cdots+\a_{n+1}^{\,2}=1.
\end{equation*}
In particular, the eigenvalues $\lambda_i$ do not determine $\s$ up to congruence.  The conformal representation \eqref{quadrep} is also non-unique, because of the differentiated relation:
\begin{equation}
\a_1A_1+\cdots+\a_{n+1}A_{n+1}=0.
\label{sumgrads}
\end{equation}
However, \eqref{quadrep} does provide computational assistance,  using results of Section \ref{secconfgrad}.  We first compute the divergence of $\s$ (summing over $i$):
\begin{align}
\Div(\s)
&=\lambda_i(\a_i\Div A_i+|A_i|^2),
\quad\text{by \eqref{divident}} \notag \\
&=\lambda_i(1-(n+1)\a_i^{\,2}),
\quad\text{by \eqref{confdiv} and \eqref{conflength}} \notag \\
&=\trace(Q)-(n+1)\xi.
\label{quaddiv}
\end{align}
Also, using the general Riemannian identity:
$$
\nabla^*\nabla(fX)=f\,\nabla^*\nabla X-2\/\nab{\nabla f}X+(\Delta f)X,
$$
we compute using \eqref{confrough}, \eqref{confcov} and \eqref{confdiv} (summing over $i$):
\begin{align}
\nabla^*\nabla\s
&=\lambda_i\a_i A_i+2\lambda_i\a_iA_i+n\lambda_i\a_i A_i
=(n+3)\s.
\label{quadrough}
\end{align}
Thus $\s$ is an eigenfunction of the rough Laplacian.  It follows from \eqref{quadrough} and \eqref{harmterms} that when $p=0$ the harmonicity equation \eqref{harmeqn} reads:
$$
q\/\Delta F+n+3=0,
$$
which contradicts the Divergence Theorem.  Therefore no non-trivial harmonic $\s$ has metric parameter $p=0$.  Hence if $\s$ is harmonic then $\s$ is preharmonic.

\par
Now for any positive integer $m$ let $\xi_m\colon M\to\R$ be the quadratic form associated to $Q^m=Q\circ\cdots\circ Q$ ($m$ iterations):
$$
\xi_m=\textstyle\sum_i(\lambda_i)^{m}\a_i^{\,2},
$$
and let $\s_m$ be the corresponding gradient field:
\begin{equation}
\s_m=\tfrac12\nabla\xi_m
=\textstyle\sum_i(\lambda_i)^{m}\a_iA_i,
\label{itquadrep}
\end{equation}
with coordinate-free expression:
\begin{equation}
\s_m(x)=Q^m(x)-\xi_m(x)x.
\label{itquadfield}
\end{equation}
It follows from \eqref{confprod} that for all $k,m\in\N$ (summing over $i,j$):
\begin{align}
\<\s_k,\s_m\>
&=(\lambda_i)^k(\lambda_j)^m\a_i\a_j\<A_i,A_j\> \notag \\
&=(\lambda_i)^{k+m}\a_i^{\,2}
-(\lambda_i)^k\a_i^{\,2}(\lambda_j)^m\a_j^{\,2}
=\xi_{k+m}-\xi_k\/\xi_m.
\label{quadprod}
\end{align}
In particular:
\begin{equation}
|\s|^2=\xi_2-\xi^2.
\label{quadlength}
\end{equation}
Then:
\begin{equation}
\nabla F=\s_2-2\xi\s.
\label{quadgrad}
\end{equation}  
We note that in addition to vanishing at the zeros of $\s$, $\nabla F$ also vanishes at the midpoints $a_{ij}=(a_i+a_j)/\sqrt2$, and by \eqref{quadlength}:
\begin{align*}
|\s(a_{ij})|^2
&=\tfrac12(\lambda_i^{\,2}+\lambda_j^{\,2})-\tfrac14(\lambda_i+\lambda_j)^2 
=\tfrac14(\lambda_i-\lambda_j)^2.
\end{align*}
Therefore:
\begin{equation}
\|\s\|_\infty=\max_{i,j}\tfrac12|\lambda_i-\lambda_j|.
\label{quadsupnorm}
\end{equation}
Of course, if all eigenvalues of $Q$ are equal then $\s$ is trivial.

\begin{prop}\label{propquad}
Let $\s$ be a non-trivial quadratic gradient field on $S^n$.  Then $\s$ is preharmonic if and only if $Q$ has precisely two distinct eigenvalues.
\end{prop}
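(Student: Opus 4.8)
The plan is to exploit the characterisation of preharmonicity for eigenfunctions of the rough Laplacian, namely equation \eqref{preharm}, and to translate it into an algebraic condition on the symmetric operator $Q$. Since \eqref{quadrough} shows that $\s$ is a rough-Laplacian eigenfunction, preharmonicity (with $p\neq0$) is equivalent to the existence of a smooth function $\z\colon S^n\to\R$ with $\nab{\nabla F}\s=\z\/\s$. First I would compute $\nab{\nabla F}\s$ using the conformal representation \eqref{quadrep}, \eqref{quadgrad} and the basic covariant-derivative formula \eqref{confcov}. Applying $\nab{\text{-}}$ to $\s=\sum_i\lambda_i\a_iA_i$ and using \eqref{confcov} together with $d\a_i=\langle A_i,\text{-}\rangle$, one obtains an expression of the form $\nab{\nabla F}\s=\s_3-3\xi\s_2+(\text{something})\s$, or more precisely a combination of the iterated fields $\s_3,\s_2,\s$ with the scalars $\xi,\xi_2$ as coefficients; the key structural fact is that $\nab{\nabla F}\s$ lands in the span of $\{\s_1,\s_2,\s_3\}$ pointwise.

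The heart of the argument is then: $\s$ is preharmonic (with a smooth $\z$) if and only if $\s_3$ is pointwise a linear combination of $\s_2$ and $\s$ with smooth-function coefficients, i.e. $\s_3\wedge\s_2\wedge\s=0$ identically. Here I would use \eqref{itquadfield}: since $\s_m(x)=Q^m(x)-\xi_m(x)x$ and the $x$-components can be absorbed, pointwise collinearity modulo $\s$ and $\s_2$ of $\s_3$ is equivalent to $Q^3(x)$ lying in $\mathrm{span}\{x,Q(x),Q^2(x)\}$ for all $x$ — which is automatic — so the real content is that the \emph{coefficients} in the relation $\nab{\nabla F}\s=\z\/\s$ must be a single function $\z$ times $\s$, not an arbitrary combination of $\s_2$ and $\s$. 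Evaluating at the zeros $a_i$ of $\s$ (the eigenvectors of $Q$) forces constraints: near $a_i$, differentiating the relation $\nab{\nabla F}\s=\z\s$ and using that $\nab{}$ of the various fields at $a_i$ is controlled by $\lambda_i$ and the other eigenvalues, one should be able to read off that the eigenvalue configuration must collapse. Concretely I expect the cleanest route is: preharmonicity $\iff$ $\s_2$ is pointwise a linear combination $\s_2=f\/\s+g\cdot(\text{position term})$ with $f$ \emph{smooth} — equivalently $\s_2-\xi\s$ is pointwise proportional to $\s$ — and then use \eqref{quadprod} to compute $|\s_2-\xi\s\wedge\s|^2=\langle\s_2-\xi\s,\s_2-\xi\s\rangle|\s|^2-\langle\s_2-\xi\s,\s\rangle^2$ in terms of the $\xi_m$, reducing the vanishing of this quantity to a polynomial identity in the $\a_i^2$ that holds on $S^n$ only when at most two $\lambda_i$ are distinct.

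For the converse, if $Q$ has exactly two eigenvalues $\lambda,\mu$ with eigenspaces $V_\lambda,V_\mu$, then $Q^2=(\lambda+\mu)Q-\lambda\mu\,\mathrm{Id}$, so $\xi_2=(\lambda+\mu)\xi-\lambda\mu$ and $\s_2=(\lambda+\mu)\s$ (the $\mathrm{Id}$-part contributes $\nabla(\text{const})=0$ after the $\nabla\xi_2$ operation — more carefully, $\s_2=\tfrac12\nabla\xi_2=(\lambda+\mu)\tfrac12\nabla\xi-0=(\lambda+\mu)\s$). Then \eqref{quadgrad} gives $\nabla F=\bigl((\lambda+\mu)-2\xi\bigr)\s$, so $\nabla F$ is pointwise collinear with $\s$, whence by \eqref{confcov}-type computation $\nab{\nabla F}\s$ is a function times $\s$; combined with \eqref{quadrough} this yields preharmonicity directly, and one can read off the spinnaker $\z$ explicitly in terms of $\lambda,\mu$ and $F$.

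The main obstacle I anticipate is the forward direction: extracting from ``$\nab{\nabla F}\s$ pointwise collinear with $\s$'' the conclusion ``$Q$ has $\leqs2$ eigenvalues'', because a priori the collinearity could hold for subtler reasons than $\s_2\in\mathrm{span}(\s)$ pointwise — one must rule out the possibility that the $\s_2$-component and the residual $\s$-component conspire point-by-point with non-smooth or sign-varying ratios. The way around this is to use the smoothness (indeed real-analyticity) of $\z$ together with behaviour at the zeros of $\s$: at each eigenvector $a_i$ the field $\s$ vanishes to first order with ``Hessian'' determined by $\lambda_i-\lambda_j$ in the $a_j$-directions, and matching the first-order behaviour of $\nab{\nabla F}\s$ against $\z(a_i)\s$ forces the relevant eigenvalue differences to satisfy relations that, unless only two distinct eigenvalues occur, are inconsistent. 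Equivalently, and perhaps more efficiently, one reduces to showing that the polynomial $\langle\s_2-\xi\s,\s_2-\xi\s\rangle\,|\s|^2-\langle\s_2-\xi\s,\s\rangle^2$, expanded via \eqref{quadprod} as a polynomial in $(\a_1^2,\dots,\a_{n+1}^2)$ on the simplex $\sum\a_i^2=1$, vanishes identically iff the multiset $\{\lambda_i\}$ takes at most two values — a symmetric-function computation that should factor through the Vandermonde-type expression $\prod_{i<j}(\lambda_i-\lambda_j)$ restricted appropriately.
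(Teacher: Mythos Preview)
Your computation of $\nab{\nabla F}\s$ is on the right track and matches the paper's key formula \eqref{quad1}:
$$
\nab{\nabla F}\s=\s_3-3\xi\s_2+(4\xi^2-\xi_2)\s,
$$
so preharmonicity is exactly the condition $\s_3-3\xi\s_2=f\s$ for some smooth $f$. Your converse argument, via the minimal polynomial $Q^2=(\lambda+\mu)Q-\lambda\mu\,\mathrm{Id}$, is essentially the paper's: once $\s_m=\lambda^{m-1}\s$ (in the paper's normalisation \eqref{quad2iter}) the formula above collapses to a scalar times $\s$.

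For the forward direction, however, there is a genuine error and a gap. The error: you assert that $Q^3(x)\in\mathrm{span}\{x,Q(x),Q^2(x)\}$ ``is automatic''. It is not; this holds for all $x$ precisely when the minimal polynomial of $Q$ has degree at most three, i.e.\ when $Q$ has at most three distinct eigenvalues. When there are four or more, $\s_3\wedge\s_2\wedge\s\neq0$ generically and preharmonicity fails immediately---but your write-up obscures this. The gap: your ``cleanest route'' asserts that preharmonicity is equivalent to $\s_2$ being pointwise proportional to $\s$, and then proposes to test this via a norm-of-wedge polynomial. The equivalence is true, but you have not argued it; in the three-eigenvalue case one needs the intermediate step $\s_3=a\s_2+b\s$ (constants, from Cayley--Hamilton) so that $\s_3-3\xi\s_2=(a-3\xi)\s_2+b\s$, and then a density argument on the level set $\{3\xi=a\}$ to force $\s_2\wedge\s=0$, contradicting three distinct eigenvalues. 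None of this is in your proposal.

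The paper's route is in fact the one you mention only in passing: differentiate the relation $\s_3-3\xi\s_2=f\s$ at an eigenvector $a_j$ (where $\s$ vanishes) in the direction of another eigenvector $a_k$. Using $\nab X\s_m\big|_{a_j}=(\lambda_k^{\,m}-\lambda_j^{\,m})a_k$ for $X=a_k$, one obtains $f(a_j)=\lambda_k^{\,2}-2\lambda_j\lambda_k-2\lambda_j^{\,2}$. If a third distinct eigenvalue $\lambda_\ell$ exists, equating the two expressions for $f(a_j)$ gives $\lambda_k+\lambda_\ell=2\lambda_j$; by symmetry also $\lambda_j+\lambda_\ell=2\lambda_k$, forcing $\lambda_j=\lambda_k$, a contradiction. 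This single differentiation step replaces both your Cayley--Hamilton case split and your polynomial wedge computation, and is what you should commit to.
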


\begin{proof}
It follows from \eqref{quadgrad}, \eqref{quadrep} and \eqref{confcov} that (summing over $i,j$):
\begin{align}
\allowdisplaybreaks
\nab{\nabla F}\s
&=\lambda_i\<\nabla F,A_i\>A_i
-\lambda_i\a_i^{\,2}\,\nabla F \notag \\
&=\lambda_i\/\lambda_j^{\,2}\a_j\<A_i,A_j\>A_i
-2\xi\/\lambda_i\lambda_j\a_j\<A_i,A_j\>A_i
-\xi\,\nabla F \notag \\
\intertext{which by \eqref{confprod}:}
&=\s_3-3\/\xi\s_2+(4\xi^2-\xi_2)\s.
\label{quad1}
\end{align}
It therefore follows from \eqref{quad1} that $\s$ is preharmonic if and only if:
\begin{equation}
\s_3-3\/\xi\s_2=f\/\s,
\label{quadcoll}
\end{equation}  
for some smooth function $f\colon M\to\R$.  Since $\s$ is non-trivial there exist $j,k$ such that $\lambda_j\neq\lambda_k$.  Set $x=a_j\in M$ and $X=a_k\in T_x M$.  Covariant differentiation of equation \eqref{quadcoll} along $X$ yields:
$$
\nab X\s_3-3\lambda_j\/\nab X\s_2=f(a_j)\/\nab X\s,
$$
bearing in mind that $\s(a_j)=0$. 
From the conformal representation \eqref{itquadrep}, using \eqref{confcov} and \eqref{confield} we obtain for all $m\in\N$:
\begin{align*}
\nab X\s_m
&=\textstyle\sum_i
(\lambda_i)^m\bigl(\<X,A_i(a_j)\>A_i(a_j)-\d_{ij} X\bigr)  
=((\lambda_k)^m-(\lambda_j)^m)a_k.
\end{align*}
Therefore if $\s$ is preharmonic:
$$
\lambda_k^{\,3}-\lambda_j^{\,3}-3\lambda_j(\lambda_k^{\,2}-\lambda_j^{\,2})
=(\lambda_k-\lambda_j)\/f(a_j),
$$
and cancelling the non-zero common factor yields:
$$
f(a_j)
=\lambda_k^{\,2}-2\/\lambda_j\/\lambda_k-2\/\lambda_j^{\,2}. 
$$
Now if $\lambda_\ell$ is a third distinct eigenvalue then:
$$
f(a_j)=\lambda_\ell^{\,2}-2\/\lambda_j\lambda_\ell-2\lambda_j^{\,2},
$$
and eliminating $f(a_j)$ yields:
$$
\lambda_k+\lambda_\ell=2\lambda_j.
$$
By symmetry (ie. taking $x=a_k$ and $X=a_j$):
$$
\lambda_j+\lambda_\ell=2\lambda_k,
$$
which forces the contradiction $\lambda_j=\lambda_k$.  

\par
Conversely, if $Q$ has precisely two distinct eigenvalues then it follows from \eqref{quadrep} and \eqref{sumgrads} that $\s$ may be represented:
\begin{equation}
\s=\lambda(\a_1A_1+\cdots+\a_rA_r),
\label{quad2grad}
\end{equation}
for some $r<n+1$, where $\lambda$ is the difference of the eigenvalues.  The corresponding potential is:
\begin{equation}
\xi=\lambda(\a_1^{\,2}+\cdots+\a_r^{\,2}).
\label{quad2pot}
\end{equation}
Then for all $m\in\N$:
\begin{equation}
\xi_m=\lambda^{m-1}\xi,
\quad
\s_m=\lambda^{m-1}\s,
\label{quad2iter}
\end{equation}
and \eqref{quad1} therefore reduces to:
\begin{equation}
\nab{\nabla F}\s=(\lambda-2\xi)^2\s.
\label{quad2cov}
\end{equation}
Thus $\s$ is indeed preharmonic.
\end{proof}

The quadratic gradient field $\s$ on $S^n$ represented by \eqref{quad2grad} is congruent to $\lambda\Sigma_r$ where:
$$
\Sigma_r(x)=((1-s_r)x_1,\dots,(1-s_r)x_r,
-s_r\/x_{r+1},\dots,-s_r\/x_{n+1}),
$$
for all $x=(x_1,\dots,x_{n+1})\in S^n$, using the abbreviation:
$$
s_r=x_1^{\,2}+\cdots+x_r^{\,2}.
$$
The integral curves of $\s$ are great circle arcs from a great $(n-r)$-sphere to the orthogonal great $(r-1)$-sphere; and by \eqref{quadsupnorm} the maximum speed is $|\lambda|/2$, which is attained midway along each arc.  For $r\geqs 3$ let $\lambda_0$ be the unique positive root of:
\begin{equation}
(r-2)\lambda^4+2(r^2-5)\lambda^2-8(r+1)=0.
\label{quadmu}
\end{equation}
If $\Delta$ is the discriminant of \eqref{quadmu} then one obtains by elementary means:
\begin{equation*}
4(r^2-2)^2<\Delta<4(r^2-1)^2,
\end{equation*}
provided $r\geqs4$, with the upper bound in fact valid if $r\geqs3$.  These yield bounds:
\begin{equation}
\frac{3}{r-2}<\lambda_0^{\,2}<\frac{4}{r-2}.
\label{quadmubounds}
\end{equation}

\begin{theorem}\label{thmquad}
A non-trivial quadratic gradient field $\s$ on $S^n$ is harmonic if and only if $n\geqs5$ is odd and $\s$ is congruent to $\lambda_0\/\Sigma_r$ with $r=(n+1)/2$. Furthermore $\s$ is metrically unique, with $p=r+1$ and: 
\begin{equation}
2q=\frac{(2-r)(1+r)}{1+r+(\lambda_0/2)^2}.
\label{ddag}
\tag{\ddag}
\end{equation}
\end{theorem}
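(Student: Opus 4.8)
The plan is to reduce harmonicity to the scalar PDE \eqref{eigenharm} and then to exploit a splitting of $\Delta F$ which turns out to be the source of all the rigidity. Since it was already noted (before Proposition \ref{propquad}) that no non-trivial harmonic quadratic gradient field can have metric parameter $p=0$, a harmonic $\s$ is necessarily preharmonic, so by Proposition \ref{propquad} the operator $Q$ has exactly two distinct eigenvalues. Normalising, I take $Q$ to have eigenvalue $\lambda\neq0$ with multiplicity $r$ (with $1\leqs r\leqs n$) and eigenvalue $0$ with multiplicity $n+1-r$, so that $\xi=\lambda(\a_1^{\,2}+\cdots+\a_r^{\,2})$ is non-constant and $\s$ is congruent to $\lambda\Sigma_r$. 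Then, by \eqref{quadrough}, \eqref{quad2cov}, \eqref{quadlength} and \eqref{quad2iter}, $\s$ is a preharmonic eigenfunction of the rough Laplacian with eigenvalue $\nu=n+3$ and spinnaker $\z=(\lambda-2\xi)^2$, while $2F=\xi(\lambda-\xi)$ and $\nabla F=(\lambda-2\xi)\s$.

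The key step is the computation of $\Delta F$. Applying \eqref{divident} to $\nabla F=(\lambda-2\xi)\s$, together with $\Div\s=\trace Q-(n+1)\xi=r\lambda-(n+1)\xi$ (by \eqref{quaddiv}) and $d(\lambda-2\xi)(\s)=-2\langle\nabla\xi,\s\rangle=-4|\s|^2=-8F$, I get
\begin{align*}
\Delta F&=8F-(\lambda-2\xi)\bigl(r\lambda-(n+1)\xi\bigr) \\
&=8F-\tfrac{n+1}{2}\z-\bigl(r-\tfrac{n+1}{2}\bigr)\lambda(\lambda-2\xi),
\end{align*}
on substituting $r\lambda-(n+1)\xi=\tfrac{n+1}{2}(\lambda-2\xi)+\bigl(r-\tfrac{n+1}{2}\bigr)\lambda$. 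The essential observation is that $\lambda(\lambda-2\xi)$ is \emph{not} a function of $F$: on each level set $\{F=c\}$ with $0<c<\lambda^2/8$ it takes the two opposite non-zero values $\pm\sqrt{\lambda^2\z}$. Hence, substituting $\Delta F$, $\z$ and $\nu$ into \eqref{eigenharm}, its left-hand side has the form $P(F)+\lambda(\lambda-2\xi)\,G(F)$ with $P,G$ polynomials and $G(F)=-\bigl(r-\tfrac{n+1}{2}\bigr)(p+q+2qF)$; its vanishing on $S^n$ forces $P\equiv G\equiv0$, and since $p+q+2qF\not\equiv0$ (because $p\neq0$) this yields $2r=n+1$, so $n$ is odd and $r=(n+1)/2$.

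Once $2r=n+1$ the obstructing term disappears: $\Delta F=8(r+1)F-r\lambda^2$, $\z=\lambda^2-8F$, $\nu=2(r+1)$, and \eqref{eigenharm} becomes a genuine quadratic in $F$, yielding three coefficient equations. The $F^2$-coefficient is $16q(r+1-p)$; the remaining two equations rule out $q=0$ (it would force $\lambda^2\leqs0$), so $p=r+1$. The $F$-coefficient then reads $4(r+1)(r-2)+2q\bigl(4(r+1)+\lambda^2\bigr)=0$, which is exactly \eqref{ddag}, and eliminating $q$ between this and the constant term produces precisely the quartic \eqref{quadmu} for $\lambda^2$. Since \eqref{quadmu} has no positive root when $r\leqs2$ and a unique positive root $\lambda_0^{\,2}$ when $r\geqs3$, we are forced into $\lambda^2=\lambda_0^{\,2}$ and $n=2r-1\geqs5$, with $p$ and $q$ then uniquely determined (metric uniqueness; note $\lambda\Sigma_r\cong-\lambda\Sigma_r$ when $2r=n+1$, so the sign of $\lambda$ is immaterial and we may take $\lambda=\lambda_0$). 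Conversely, for $n\geqs5$ odd, $r=(n+1)/2$, $\lambda=\lambda_0$ and $(p,q)$ as above, all three coefficient equations hold, so \eqref{eigenharm} is satisfied and $\lambda_0\Sigma_r$ is $(p,q)$-harmonic. I expect the main obstacle to be the $\Delta F$ computation together with recognising that it fails to be polynomial in $F$ unless $2r=n+1$; everything downstream is routine polynomial bookkeeping.
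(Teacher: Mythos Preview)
Your argument is correct and follows essentially the same architecture as the paper's proof: reduce to the preharmonic case via Proposition~\ref{propquad}, compute $\Delta F$, substitute into \eqref{eigenharm}, and extract coefficient equations.  The one organisational difference is worth noting.  The paper expands \eqref{eigenharm} as a quartic polynomial in the single variable $\xi$ and reads off five coefficients; the vanishing of the quartic coefficient gives $(n+3-2p)q=0$, and the cubic coefficient then forces $n+1=2r$.  You instead split $\Delta F$ into a part that is polynomial in $F$ and the ``odd'' term $(r-\tfrac{n+1}{2})\lambda(\lambda-2\xi)$, observe that $\zeta=\lambda^2-8F$ is itself a function of $F$, and use the fact that $\lambda-2\xi$ changes sign across level sets of $F$ to obtain $2r=n+1$ directly, with $p$ still free.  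This is a clean way of packaging the same information (it is effectively the even/odd decomposition of the $\xi$-polynomial about $\xi=\lambda/2$), and it makes the dimensional constraint appear before any discussion of $p$ or $q$.  After that point the two arguments coincide: the remaining quadratic-in-$F$ equation yields exactly \eqref{quadconst} and \eqref{quadlin}, whose elimination of $q$ gives \eqref{quadmu}.  One small remark: your phrase ``it would force $\lambda^2\leqs0$'' for the $q=0$ case is literally accurate only for $r\geqs4$; for $r=2$ and $r=3$ the $F^0$- and $F^1$-equations respectively degenerate to $6=0$ and $4=0$, so the contradiction is immediate rather than via the sign of $\lambda^2$.  This does not affect the validity of the proof.
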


\begin{proof}
We noted earlier that if $\s$ is harmonic then $\s$ is preharmonic.  Therefore
by Proposition \ref{propquad} it may be assumed that $\xi$ and $\s$ are of the form \eqref{quad2pot} and \eqref{quad2grad} respectively.  It then follows from \eqref{quad2iter} that \eqref{quadlength} and \eqref{quadgrad} simplify:
\begin{equation*}
|\s|^2=\lambda\xi-\xi^2,
\qquad
\nabla F=(\lambda-2\xi)\s.
\end{equation*}
Therefore by \eqref{divident} and \eqref{quaddiv}:
\begin{align*}
\Delta F
=-\Div \nabla F
&=(2\xi-\lambda)\Div\s+4|\s|^2  \\
&=-r\lambda^2+(n+5+2r)\lambda\xi-2(n+3)\xi^2.
\end{align*}
Furthermore, from \eqref{quad2cov} the spinnaker of $\s$ is:
$$
\z=(\lambda-2\xi)^2.
$$
Harmonicity equation \eqref{eigenharm} is therefore polynomial in $\xi$.  From the quartic terms:
$$
(n+3-2p)q=0.
$$
If $q=0$ then \eqref{eigenharm} reduces to:
$$
n+3+(2-r)p\lambda^2+(n+3+2(r-3)p)\lambda\xi-(n+3+(n-5)p)\xi^2=0,
$$
from the constant and linear terms of which:
$$
(r-2)p\lambda^2=n+3=2(3-r)p,
$$
forcing a disparity of sign.  Therefore $q\neq0$ and $2p=n+3$.  From the cubic terms:
$$
n+1-2r=0.
$$
Finally, the constant and linear/quadratic terms, respectively, yield:
\begin{gather}
\bigl((r+1)(r-2)+rq\bigr)\lambda^2=2(r+1), 
\label{quadconst} \\
\bigl(4(r+1)+\lambda^2\bigr)q=2(r+1)(2-r).
\label{quadlin}
\end{gather}
Eliminating $q$ yields equation \eqref{quadmu} for $\lambda_0$, and \eqref{quadlin} rearranges to \eqref{ddag}.
\end{proof}

The metric parameter $q$ of Theorem \ref{thmquad} is manifestly negative, with:
\begin{equation}
2q>2-r.
\label{quadloose}
\end{equation}  
The metric parameters $(p,q)$ for $\s$ therefore satisfy:
$$
q>\frac{1-r}{2}=1-\frac{p}{2}.
$$
Proposition \ref{propharm} then implies that:
$$
\tfrac14\/\lambda_0^{\,2}=\|\s\|_\infty^{\,2}>\frac{1}{p-1}=\frac1r.
$$
This allows the following modification of \eqref{quadmubounds}:
\begin{equation}
\frac{4}{r}<\lambda_0^{\,2}<\frac{4}{r-2},
\label{quadnewbounds}
\end{equation}
which is in fact a more accurate estimate when $r>8$.  Combining \eqref{quadloose} with the upper bound for $\lambda_0$ yields:
$$
q\/\|\s\|_\infty^{\,2}>\frac{2-r}{2(r-2)}=-\frac12.
$$
Therefore in all cases $\s$ is comfortably $q$-Riemannian.  Furthermore, from \eqref{quadconst} and \eqref{quadlin}:
\begin{equation*}
4rq=(r-2)\lambda_0^{\,2}-2(r-1)^2,
\end{equation*}
which when combined with \eqref{quadmubounds} yields tighter bounds for $q$:
\begin{equation}
\frac{3-2(r-1)^2}{4r}<q<\frac{4-2(r-1)^2}{4r},
\label{quadboundsq}
\end{equation}
provided $r\geqs4$, the upper bound being valid if $r\geqs3$.  (Use of \eqref{quadnewbounds} would marginally improve the lower bound when $r>8$.) 
For comparison, the precise values of $\lambda_0$ and $q$ for low dimensions may be tabulated:

\bigskip
\medskip
\centerline{
\vbox{\offinterlineskip
\hrule
\halign{
&\vrule#
&\strut
\hfill\quad\;\;$#$\quad\;\;\hfill\cr
height0pt&\omit&&\omit&&\omit&&\omit&&\omit&\cr
&n&&r&&p&&q&&\lambda_0^{\,2}/4&\cr
height0pt&\omit&&\omit&&\omit&&\omit&&\omit&\cr
\noalign{\hrule}
height2pt&\omit&&\omit&&\omit&&\omit&&\omit&\cr
&5&&3&&4&&\dfrac{1}{\sqrt3}-1&&\sqrt3-1&\cr
height4pt&\omit&&\omit&&\omit&&\omit&&\omit&\cr
\noalign{\hrule}
height4pt&\omit&&\omit&&\omit&&\omit&&\omit&\cr
&7&&4&&5&&\dfrac{\sqrt{201}-29}{16}&&\dfrac{\sqrt{201}-11}{8}&\cr
height4pt&\omit&&\omit&&\omit&&\omit&&\omit&\cr
\noalign{\hrule}
height4pt&\omit&&\omit&&\omit&&\omit&&\omit&\cr
&9&&5&&6&&\dfrac{\sqrt{34}-13}{5}&&\dfrac{\sqrt{34}-5}{3}&\cr
height4pt&\omit&&\omit&&\omit&&\omit&&\omit&\cr}
\hrule}}

\end{document}